\pgfplotsset{compat=1.11}
\DeclareRobustCommand{\VAN}[3]{#2} % set up for citation
\def\Lam{\Lambda}
\def\lam{\lambda}
\def\R{\mathbb{R}}
\def\Z{\mathbb{Z}}
\def\eps{\epsilon}
\def\lam{\lambda}
\def\cE{\mathcal E}
\def\cC{\mathcal C}
\def\cP{\mathcal P}
\def\cI{\mathcal I}
\def\cB{\mathcal B}
\def\cX{\mathcal X}
\def\cM{\mathcal M}
\newcommand{\bb}[1]{\mathbb{#1}}
\newcommand{\N}{\bb N}
\newcommand{\ob}[1]{\left(#1\right )} %% Open braces
\newcommand{\cb}[1]{\left[#1\right ]} %% Closed braces
\newcommand{\set}[1]{\left\{#1\right\}} %% Pair of curly braces
\newcommand{\norm}[1]{\|#1\|} %% Norm of argument
\newcommand{\ov}[1]{\overline{#1}}
\newcommand{\ext}{\mathsf{Ext}}
\newcommand{\Ciso}{C_{\text{iso}}}
\newcommand{\proplam}{\rho}
\newtheorem*{theorem*}{Theorem}
\newtheorem{theorem}{Theorem}[section]
\newtheorem{lemma}[theorem]{Lemma}
\newtheorem{prop}[theorem]{Proposition}
\newtheorem*{prop*}{Proposition}
\newtheorem*{claim*}{Claim}
\newtheorem*{fact*}{Fact}
\newtheorem{remark}{Remark}
\newtheorem*{remark*}{Remark}
\newtheorem*{defn*}{Definition}
\newtheorem{assumption}{Assumption}
\theoremstyle{definition}
\newtheorem{example}{Example}
\newcommand{\abs}[1]{\left|#1\right|}
\newcommand{\binf}{\partial_{\text{ext}}}
\newcommand{\odd}{\textsf{o}}%{\textrm{odd}}
\newcommand{\even}{\textsf{e}}%{\textrm{even}}
\newcommand{\inte}{\mathsf{Int}}
\newcommand{\cCb}{\overline{\cC}}
\newcommand{\cU}{\mathcal{U}}
\tikzset{ even/.style={circle,fill=white!25,draw,thick,color=black!75, minimum size=4pt,
    inner sep=0}}
\tikzset{ ovE/.style={circle,fill=lightgray!50,draw,thick, minimum size=15pt,
    inner sep=0}}
\tikzset{ ovO/.style={circle,fill=black!50,draw,thick, minimum size=15pt,
    inner sep=0}}
\tikzset{ evE/.style={circle,fill=white!50,draw,thick,color=gray!50, minimum size=4pt,
    inner sep=0}}
\tikzset{ evO/.style={circle,fill=white!50,draw,thick,color=black, minimum size=4pt,
    inner sep=0}}
\tikzset{ eedge/.style={decorate, decoration={coil,segment
      length=5pt,aspect=0},black}}
\title{Pirogov--Sinai Theory Beyond Lattices}
\author{Sarah Cannon\thanks{Department of Mathematical Sciences, Claremont McKenna College, scannon@cmc.edu}, Tyler Helmuth\thanks{Department of Mathematical Sciences, University of Durham, tyler.helmuth@durham.ac.uk}, Will Perkins\thanks{School of Computer Science, Georgia Institute of Technology, wperkins3@gatech.edu}}
\date{\today}
\begin{document}

\maketitle

\begin{abstract}
  Pirogov--Sinai theory is a well-developed method for understanding
  the low-temperature phase diagram of statistical mechanics models on
  lattices. Motivated by physical and algorithmic questions beyond the
  setting of lattices, we develop a combinatorially flexible version
  of Pirogov--Sinai theory for the hard-core model of independent
  sets. Our results illustrate that the main conclusions of
  Pirogov--Sinai theory can be obtained in significantly greater
  generality than that of $\Z^{d}$. The main ingredients in our
  generalization are combinatorial and involve developing appropriate
  definitions of contours based on the notion of cycle basis
  connectivity. This is inspired by works of Tim\'{a}r and
  Georgakopoulos--Panagiotis.
\end{abstract}

\section{Introduction}

A fundamental question in mathematical statistical mechanics is
whether or not a given system undergoes a phase transition. This was
first understood in the context of the Ising model, where Peierls
established the existence of a phase transition on $\mathbb{Z}^{d}$,
$d\geq 2$~\cite{peierls1936ising, griffiths1964peierls,
  dobrushin1965existence}.  Peierls' strategy can be applied to a wide
range of models and has become an indispensable tool for proving the
existence of phase transitions. The later development of
Pirogov--Sinai theory greatly expanded the scope of statistical
mechanics systems for which phase transitions can be established, and
allowed for the extraction of much more detailed information about
low-temperature
behavior~\cite{pirogov1976phase,Zahradnik,borgs1989unified}. At this
stage, Pirogov--Sinai theory is a textbook method for the study of
discrete spin systems on $\Z^{d}$~\cite{friedli2017statistical} as
well as more general lattices~\cite{MSS2024}.

Physical intuition, however, suggests that such results should not be
restricted to lattices.  Formalizing this intuition has become an
active line of mathematical research, especially in the context of
percolation and percolation-like models, see,
e.g.,~\cite{BS,babson1999cut,DCGRSY,Raoufi,EasoHutchcroft2023}.  One
of the main goals of this paper is to investigate the generality of
the phase transition phenomenon for models of lattice gases. We do
this in the context of the hard-core model of independent sets in
graphs.  In the setting of lattices, the phase transition for the
hard-core model reflects the breaking of a spatial symmetry, as
opposed to an internal (spin-space) symmetry as occurs for the Ising
and Potts models. Thus one might expect a more delicate interplay
between the geometry of the graph considered and the existence of
phase transitions.

\subsection{The Hard-Core Model: Background and Main Results}
\label{sec:model}
To set the stage we briefly define the hard-core model and discuss
some of what is known.  Given a finite graph $G=(V,E)$ and an
\emph{activity} $\lambda\geq 0$, the \emph{hard-core model} is the
probability measure $\mu_{G,\lam}$ on subsets of $V$ given by
\begin{equation}
  \label{eq:intro}
  \mu_{G,\lam}\cb{I} = \frac{\lambda^{|I|}}{Z_G(\lambda)}1_{I\in\cI}, \qquad
  Z_G(\lambda) = \sum_{I\in \cI}\lambda^{|I|}
\end{equation}
where $\cI$ is the set of \emph{independent sets} of $G$, i.e.,
$I\in \cI$ if and only if no two vertices of $I$ are contained in an
edge of $G$ together.  Dobrushin proved one of the first important
theorems about the hard-core model when he established the existence
of a phase transition for the hard-core model on $\Z^{d}$, $d\geq 2$
by a Peierls-type argument~\cite{dobrushin1968problem}.

Dobrushin's result is fairly intuitive. Formally, it is most
  easily described in terms of Gibbs measures $\mu_{G,\lam}$ for infinite graphs $G$, which can be thought of as infinite-volume limits of  the measures
  in~\eqref{eq:intro}. We recall the details about these infinite-volume Gibbs measures in
  Section~\ref{sec:md-infinite}.

When $\lambda$ is small, occupied vertices are sparse and
decorrelated and there is a unique infinite-volume hard-core measure
$\mu=\mu_{\lam}$ on $\Z^d$. The infinite-volume
measure is translation invariant, i.e., 
$\mu(v\in I) = \mu(v'\in I)$ for all $v, v' \in V$. When $\lambda$ is
large, however, multiple infinite-volume Gibbs measures exist, and we say
\emph{phase coexistence} occurs. In particular there are two extremal
Gibbs measures that favor the even and odd sublattices respectively. 
More precisely, Dobrushin proved that there exist infinite-volume Gibbs measures
$\mu^{\even}$ and $\mu^{\odd}$ with the property that
$\mu^{\even}(v\in I)>\mu^{\even}(v'\in I)$ if $v$ is an even vertex
and $v'$ is an odd vertex, and vice-versa for $\mu^{\odd}$. These
measures are thus distinct, and are not translation
invariant. The measures $\mu^{\even}$ and $\mu^{\odd}$ can be constructed as limits of finite-volume measures as in~\eqref{eq:intro} but with even/odd boundary conditions; see Section~\ref{sec:md} for a precise description.

Dobrushin's proof made crucial use of the invariance of $\Z^{d}$ under
lattice shifts. Compared to Peierls spin-flip strategy for the Ising
model, this reflects the fact that the phase transition for the
hard-core model breaks the bipartite spatial symmetry of $\Z^{d}$,
while the phase transition for the Ising model breaks the internal
$\Z_{2}$ (spin space) symmetry of the model. This is not just a matter
of proof technique, as there are well known examples of (transitive)
$d$-dimensional graphs in which the hard-core model does not undergo a
phase transition~\cite{heilmann1972theory,van1999absence}.

Subsequent work on the hard-core model on $\Z^d$ has produced improved
bounds on the values of $\lam$ for which uniqueness of Gibbs measure
holds and for which phase coexistence
holds~\cite{galvin2004phase,restrepo2013improved,blanca2016phase}. Despite
this progress, it remains an open problem to show that for each
$d \ge 2$ there is a unique transition in $\lam$ between uniqueness
and phase coexistence.  In this paper we will focus on results like
that of Dobrushin -- proving the existence of a phase transition in
the hard-core model -- on much more general classes of bipartite
graphs.

Let $G=(V,E)$ be a countably infinite bipartite graph. We will always
assume that $G$ is connected and of bounded maximum degree. In analogy
with $\Z^{d}$, call the two parts of the bipartite graph \emph{even}
and \emph{odd}.  Bipartiteness alone is not enough to imply phase
coexistence will occur for some $\lambda$.  It is known via the FKG
inequality, however (see~\cite[Lemma~3.2]{van1994percolation} and
Section~\ref{sec:md} below), that phase coexistence occurs on
bipartite graphs if and only if all-even and all-odd boundary
conditions lead to distinct infinite-volume measures.  Thus a first
motivation for our work is to understand structural conditions on
bipartite graphs that ensure these extremal boundary conditions lead
to distinct measures.  A second motivation is to understand the broken
symmetry phase of the hard-core model when it exists. We have in mind
both statistical mechanical questions (decay of correlations,
construction of the phase diagram) as well as closely related
algorithmic questions (can one efficiently sample a configuration from
the model on a finite graph). We discuss these motivations, and how
they have lead us to develop a combinatorially flexible version of
Pirogov--Sinai theory, after stating our results.

A graph is \emph{vertex transitive} if for every $u,v\in V$, there is
an automorphism $\pi$ of $G$ such that $\pi(u)=v$. Intuitively, vertex
transitivity is a very strong way of saying the two partition sets of
a bipartite graph look the same, and hence one might expect phase
coexistence at large activities.  To make this precise, recall that a
graph $G$ is \emph{one-ended} if for any finite $S\subset V$,
$G\setminus S$ has a single infinite component. Roughly speaking, this
indicates that the topology of $G$ is similar to that of $\Z^{d}$ for
$d\geq 2$.

A \emph{cycle basis} for $G$ is a generating set $\mathcal{B}$ for the
cycle space of $G$; see Section~\ref{sec:cycle-space-cycle} for a
precise definition. We will assume that the cycle basis is bounded.
That is, for any edge $e$, the number of edges $e' \ne e$ that are in
a basis cycle with $e$ is at most $D=D(\mathcal{B})<\infty$.  Our
first main result is that these conditions suffice for the existence
of a phase transition.

\begin{theorem}
  \label{thm:transitive}
  Suppose $G$ is bipartite, vertex transitive with degree $\Delta$,
  one-ended, and has a $D$-bounded cycle basis $\mathcal{B}$.  There
  exists a $\lambda_{\star}(D,\Delta)<\infty$ such that if
  $\lambda>\lambda_{\star}(D,\Delta)$ then phase coexistence occurs
  for the hard-core model on $G$.
\end{theorem}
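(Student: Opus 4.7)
The plan is a Peierls-type argument adapted to the hard-core model on $G$ with even boundary conditions. Fix a sequence of finite subgraphs $G_n \uparrow G$ exhausting $G$, and let $\mu_{G_n,\lambda}^{\text{even}}$ denote the finite-volume measure in which the boundary of $G_n$ is pinned to agree with the \emph{all-even} ground state (the independent set consisting of every even vertex). I would show that for $\lambda$ large, and for a fixed even vertex $v_0$, one has $\mu_{G_n,\lambda}^{\text{even}}(v_0 \in I) > 1/2$ uniformly in $n$; by vertex-transitivity the analogous bound holds for odd $v_0$ under odd boundary conditions, and combining this with the FKG-based argument mentioned in the introduction separates the infinite-volume limits $\mu^{\text{even}}$ and $\mu^{\text{odd}}$, which is exactly phase coexistence.

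The first step is to define contours using the cycle basis $\mathcal{B}$. Given a configuration $I$, call an edge a \emph{defect} if at least one of its endpoints disagrees with the all-even ground state (an occupied odd vertex, or an unoccupied even vertex whose unoccupancy is ``forced'' by an occupied neighbor). Declare two defect edges to be $\mathcal{B}$-adjacent if they share a cycle of $\mathcal{B}$, and define a contour to be a maximal $\mathcal{B}$-connected set of defect edges. Since the cycles in $\mathcal{B}$ generate the entire cycle space, $\mathcal{B}$-connectivity plays the role of the ``surface'' structure of lattice contours, following the cycle-connectivity philosophy of Tim\'{a}r and Georgakopoulos--Panagiotis cited in the abstract. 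One-endedness then gives each finite contour $\Gamma$ a unique infinite exterior component of $G \setminus V(\Gamma)$ and hence a well-defined interior $\text{Int}(\Gamma)$.

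The second step is the Peierls weight estimate: for each contour $\Gamma$, I would construct an (almost) injective map $\Phi_\Gamma$ sending a configuration containing $\Gamma$ to one in which $\text{Int}(\Gamma) \cup V(\Gamma)$ has been replaced by the even ground state, and verify that $w(\Phi_\Gamma(I))/w(I) \ge \lambda^{c|\Gamma|}$ for a constant $c>0$ depending only on $\Delta$. Here vertex-transitivity ensures that the two parts of the bipartition are locally indistinguishable, so that the flip gains a number of occupied vertices proportional to $|\Gamma|$. Combined with a counting bound
\begin{equation*}
\left|\{\text{contours } \Gamma \text{ of size } k \text{ with } v_0 \in \text{Int}(\Gamma)\}\right| \le (CD\Delta)^{k},
\end{equation*}
obtained by a tree-exploration of $\mathcal{B}$-adjacent defect edges using the $D$-boundedness of $\mathcal{B}$, a union bound yields
\begin{equation*}
\mu_{G_n,\lambda}^{\text{even}}(v_0 \notin I) \le \sum_{k \ge 1} (CD\Delta)^{k} \lambda^{-ck},
\end{equation*}
which is less than $1/2$ once $\lambda > \lambda_\star(D,\Delta)$, uniformly in $n$.

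I expect the main obstacle to be the Peierls weight comparison itself. On $\Z^{d}$, the gain of occupancy proportional to $|\Gamma|$ follows from elementary parity considerations inside lattice cubes, but in the abstract setting no such geometry is available. The required isoperimetric-type estimate — that the even/odd imbalance inside $\text{Int}(\Gamma)$ is controlled by the length of $\Gamma$ — must be extracted from vertex-transitivity, one-endedness, and the bounded cycle basis alone, and it must hold for precisely the notion of contour used in the counting bound. Designing contours so that both the geometric injectivity of $\Phi_\Gamma$ and this isoperimetric inequality are satisfied is the technical heart of the argument; the remaining summation and limiting steps are then routine.
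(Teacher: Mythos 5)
Your outline gets the combinatorial infrastructure right: defining contours as maximal $\cB$-connected sets of defect edges, using one-endedness to give each contour a well-defined interior, and bounding the number of contours of a given size through an edge via the $D$-bounded cycle basis (the paper's Lemma~\ref{lem:enum_contours} does exactly this, yielding $(eD)^{k-1}$). You also correctly anticipate the identity $b(\gamma)=|\gamma|/\Delta$ for transitive graphs, which is Lemma~\ref{lem:b_transitive} and which does supply the ``gain proportional to $|\Gamma|$'' you want.

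The genuine gap is the Peierls weight estimate itself. Your map $\Phi_\Gamma$ replaces $\inte(\Gamma)\cup V(\Gamma)$ by the all-even ground state, but this destroys all information about the configuration inside the contour, so its fibers are as large as the set of independent sets of $\inte(\Gamma)$ — exponential in $|\inte(\Gamma)|$, not in $|\Gamma|$. No per-contour weight gain of size $\lambda^{c|\Gamma|}$ can beat a multiplicity of that order, so the union bound as written does not close. Dobrushin's original argument on $\Z^d$ avoids this by \emph{shifting} the interior by a lattice vector, which is injective and only alters a boundary layer of size $O(|\Gamma|)$; but an abstract vertex-transitive graph has no automorphism playing the role of that shift (an automorphism taking an even vertex to an odd one need not move $\inte(\Gamma)$ ``nearly onto itself''), which is exactly why the one-sentence acknowledgment at the end of your proposal is pointing at a load-bearing wall.

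The paper does not construct such a map. Instead it goes through the full Pirogov--Sinai machinery: the weight of a contour $\gamma$ is re-expressed as $\widetilde{w}_\gamma = \lambda^{-b(\gamma)}\,\Xi^{\odd}_{\inte_\odd\gamma}/\Xi^{\even}_{\inte_\odd\gamma}$, and the dangerous ratio of interior partition functions is controlled inductively via a convergent cluster expansion. Vertex transitivity enters through Lemma~\ref{lem:volume-cancel-transitive}, which shows that the bulk contributions to $\log\Xi^{\even}$ and $\log\Xi^{\odd}$ cancel exactly because even and odd vertices have identical cluster contributions $Q^\even_k(v)=Q^\odd_k(v')$; what remains is a surface term of order $|\gamma|$. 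This free-energy cancellation is the substitute for the combinatorial injection you were hoping to build, and it is where the real work of Theorem~\ref{thm:transitive} lies. The final Peierls-type union bound (Proposition~\ref{prop:pt}) is then just as routine as you say — but only once the cluster-expansion estimate $\widetilde{w}_\gamma \leq (e^3\lambda^{-1/\Delta})^{|\gamma|}$ of Proposition~\ref{prop:TMAce} is in hand.
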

  
\begin{remark}
  \label{rem:transitive}
  Infinite connected vertex-transitive graphs have one, two, or
  infinitely many ends. The combinatorial underpinnings of our methods
  are restricted to one-ended graphs. 
\end{remark}
\begin{remark}
  \label{rem:non-ext}
  Generalizations of Theorem~\ref{thm:transitive} cannot include
  two-ended graphs (e.g., $\mathbb{Z}$): the one-dimensional nature of
  such graphs precludes a phase transition from
  occurring. It is also not possible to replace vertex transitive by
  vertex quasitransitive, see Section~\ref{sec:examples},
  Example~\ref{ex:BHW} below.
\end{remark}
\begin{remark}
  \label{rem:infinite-ended}
  Infinitely-ended graphs can have phase transitions, e.g., consider
  the $\Delta$-regular tree. In this case surface effects are
  comparable to boundary effects. This makes proving the existence of
  a transition simpler, and can also lead to new
  phenomena~\cite{lyons2000phase}.
\end{remark}

Our methods do not require an assumption as strong as vertex
transitivity to obtain phase coexistence, and we now give some
alternative hypotheses that suffice. Write
$V=V_{\even}\sqcup V_{\odd}$ for the bipartition of $V$ into the even
and odd parity classes. Call an automorphism $\pi$ of $G$
\emph{matched} if $\{ \{v, \pi(v)\}_{v\in V_{\even}} \}$ is a perfect
matching of $G$.  Intuitively the existence of such an automorphism
captures that the even and odd sides of $G$ look the same. We say that
$G$ is \emph{matched automorphic} if it possesses a matched
automorphism. To establish phase coexistence in the matched
automorphic setting we require one further hypothesis, that the
isoperimetric profile $\Phi_{G}(t)$ of $G$ satisfies
$\Phi_{G}(t)\geq \Ciso \log (t+1) / t$ for some $\Ciso>0$. This
assumption is a mild quantitative assertion that $G$ is not
one-dimensional; see Section~\ref{sec:ends-boundaries} for the
definition of $\Phi_{G}$. Note that this bound on $\Phi_G$ holds
automatically in the setting of Theorem~\ref{thm:transitive}.

\begin{theorem}
  \label{thm:matched}
  Suppose $G$ is bipartite, matched automorphic, one-ended, of maximum
  degree $\Delta$, and has a $D$-bounded cycle basis
  $\mathcal{B}$. Suppose further that
  $\Phi_{G}(t)\geq \Ciso \log (t+1)/t$.  There exists
  $\lambda\geq\lambda_{\star}(D,\Delta,\Ciso)<\infty$ such that if
  $\lambda\geq\lambda_{\star}(D,\Delta,\Ciso)$ then phase coexistence
  occurs for the hard core model on $G$.
\end{theorem}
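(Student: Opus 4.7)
The plan is to prove Theorem~\ref{thm:matched} by a Peierls/Pirogov--Sinai argument in which the matched automorphism $\pi$ replaces the vertex transitivity that underlies Theorem~\ref{thm:transitive}, and in which the assumption $\Phi_{G}(t)\ge\Ciso\log(t+1)/t$ compensates for the fact that matched automorphy is quantitatively weaker than transitivity. First I would construct $\mu^{\even}$ and $\mu^{\odd}$ as weak subsequential limits along an exhaustion of $G$ of the finite-volume measures $\mu^{\even}_{\Lam},\mu^{\odd}_{\Lam}$ with all even (resp.\ odd) boundary vertices of $\Lam$ forced occupied, as in Section~\ref{sec:md-infinite}, and reduce phase coexistence to the assertion $\mu^{\even}\neq\mu^{\odd}$ via the FKG criterion recalled in Section~\ref{sec:md}. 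Because $\pi$ is a graph automorphism swapping $V_{\even}$ and $V_{\odd}$, the pushforward $\pi_{*}\mu^{\even}_{\Lam}$ satisfies the Gibbs specification on $\pi(\Lam)$ with odd-parity boundary conditions, so $\pi_{*}\mu^{\even}=\mu^{\odd}$ in the limit. Consequently, for any $v\in V_{\even}$,
\[
\mu^{\odd}[v\in I]=\mu^{\even}[\pi^{-1}(v)\in I],\qquad \pi^{-1}(v)\in V_{\odd},
\]
and it suffices to prove the uniform single-site Peierls bound $\mu^{\even}_{\Lam}[w\text{ is a defect}]\le\eps(\lam)$, with $\eps(\lam)\to 0$ as $\lam\to\infty$, where $w$ is called a \emph{defect} if $w\in V_{\even}$ is unoccupied or $w\in V_{\odd}$ is occupied.

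The heart of the proof is this Peierls bound, which I would establish using the contour formalism built earlier in the paper from the $D$-bounded cycle basis $\cB$, in the spirit of Tim\'ar and Georgakopoulos--Panagiotis. Given $I\sim\mu^{\even}_{\Lam}$, decompose $I$ into even-phase, odd-phase, and defect regions; a defect at $w$ forces the existence of a contour $\Gamma$ that \emph{surrounds} $w$, in the combinatorial sense that every path from $w$ to $\partial\Lam$ crosses $\Gamma$. The standard Pirogov--Sinai flip map, which switches the interior pattern to the dominant even ground state, is a weight-improving injection whose net gain is of order $\lam^{|\Gamma|}$: the matched automorphism $\pi$ pairs each even interior vertex with an adjacent odd interior vertex up to a discrepancy supported near $\Gamma$, so the bulk contribution cancels and only a surface contribution of order $|\Gamma|$ remains. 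This yields the single-contour estimate $\mu^{\even}_{\Lam}[\Gamma]\le \lam^{-c|\Gamma|}$ for some $c=c(\Delta,D)>0$.

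A union bound over contours surrounding $w$ then gives
\[
\mu^{\even}_{\Lam}[w\text{ is a defect}]\le \sum_{k\ge k_{0}} N_{k}(w)\,\lam^{-ck},
\]
where $N_{k}(w)$ counts contours of size $k$ surrounding $w$. Since $\cB$ is $D$-bounded, each contour is a connected subgraph of an auxiliary graph of degree $\le D\Delta$, and a Kesten--Peierls count yields $N_{k}(w)\le (eD\Delta)^{k}$. The isoperimetric hypothesis $\Phi_{G}(t)\ge \Ciso\log(t+1)/t$ then enters in two complementary ways: it provides the lower bound $|\Gamma|\ge \Ciso\log(|\Int\Gamma|+1)$, which excludes the ``thin tube'' contours enclosing disproportionately large volumes that transitivity previously precluded for free, and it underwrites the bulk-cancellation step above by controlling the $\pi$-discrepancy between the numbers of even and odd vertices inside $\Gamma$ in terms of $|\Gamma|$. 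Choosing $\lam\ge\lam_{\star}(D,\Delta,\Ciso)$ so that $eD\Delta\lam^{-c}<1$ makes the resulting geometric series arbitrarily small, and passing to $\Lam\uparrow V$ yields the required single-site bound.

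The main obstacle is verifying, inside the combinatorial cycle-basis framework, the two structural features on which the Peierls estimate depends: (i) a Jordan-curve-type separation statement ensuring that each $\Gamma$ genuinely separates its interior defect from $\partial\Lam$, and (ii) a flip map whose weight gain is controlled by $|\Gamma|$ alone rather than by $|\Int\Gamma|$. The bulk cancellation in (ii) is where the matched automorphism and the log-isoperimetric bound interact most delicately, and is the step I expect to require the most care in the absence of an ambient lattice structure; everything else is either a direct adaptation of the transitive argument of Theorem~\ref{thm:transitive} or a standard counting estimate enabled by $D$-boundedness of $\cB$.
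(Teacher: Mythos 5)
Your proof plan matches the paper's high-level route for Theorem~\ref{thm:matched}: contours built from the cycle basis (Section~\ref{sec:contours}), a single-site Peierls bound (Proposition~\ref{prop:pt}), contour enumeration from $D$-boundedness (Lemma~\ref{lem:enum_contours}), the matched automorphism supplying the even/odd bulk cancellation, and the isoperimetric hypothesis controlling the Peierls sum. Two substantive points of disagreement with the paper deserve comment.

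First, you misattribute the role of $\Phi_{G}(t)\ge\Ciso\log(t+1)/t$. You say it both gives the size lower bound $|\Gamma|\ge\Ciso\log(|\Int\Gamma|+1)$ \emph{and} ``underwrites the bulk-cancellation step\ldots by controlling the $\pi$-discrepancy.'' The second half is not how the paper proceeds: the discrepancy $b(\gamma)\ge|\gamma|/\Delta$ is proved in Lemma~\ref{lem:b_matched} using only the matching $\pi$ and the degree bound $\Delta$, with no isoperimetric input whatsoever (the edge count $|\gamma|=\sum_{v\in S_1}\deg(v)$ from the double count does all the work). The isoperimetric hypothesis appears exactly once, in the proof of Proposition~\ref{prop:pt}, to convert ``$\gamma$ surrounds $v$ and has an edge at distance $k$'' into $|\gamma|\ge\Ciso\log(k+1)$, which makes $\sum_{k\ge1}\sum_{j\ge\Ciso\log(k+1)}e^{(c_2-c_1(\lambda))j}$ summable. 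The same bound is needed in the transitive case; there it is automatic by Lemma~\ref{lem:iso}, while for matched automorphic graphs it must be assumed (and it is not implied by one-endedness together with the other hypotheses -- this is the one place the two theorems genuinely diverge).

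Second, your ``flip map\ldots whose net gain is of order $\lambda^{|\Gamma|}$'' bounds the bare weight $w_\gamma=\lambda^{-b(\gamma)}$, which is indeed small. But the Peierls bound needs to control the \emph{effective} weight $\widetilde w_\gamma=w_\gamma\,\Xi^{\odd}_{\inte_\odd\gamma}/\Xi^{\even}_{\inte_\odd\gamma}$, equivalently the partition-function ratio $Z^{\odd}_{\inte_\odd\gamma}/Z^{\even}_{\inte_\odd\gamma}$. For a spin-flip symmetry (Ising) the flip map is a bijection on the interior and this ratio is trivially bounded; here $\pi$ is a \emph{spatial} symmetry, so $\pi(\inte_\odd\gamma)\neq\inte_\odd\gamma$ and the naive flip is not a self-map of the interior. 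The paper handles this through the cluster expansion machinery of Section~\ref{sec:convergence} (Lemmas~\ref{lem:volume-cancel-matched} and~\ref{lem:lambda_bound}, then Proposition~\ref{prop:TMAce}), where $\pi$-invariance of the cycle basis (ensured by Lemma~\ref{lem:BAI}) gives $Q^{\even}_k(v)=Q^{\odd}_k(\pi(v))$ and the matching argument bounds the surviving surface term by $|\gamma|$. You flag this as ``the step I expect to require the most care,'' and that instinct is correct: as written, the flip-map sketch leaves this ratio uncontrolled, and filling the gap with a direct injection argument rather than the cluster expansion would require a non-trivial comparison of $Z^{\even}_{\Lambda}$ and $Z^{\even}_{\pi(\Lambda)}$ across regions differing by a boundary layer.

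One stylistic difference worth noting in your favour: your observation that $\pi_{*}\mu^{\even}=\mu^{\odd}$, reducing the whole theorem to a single Peierls bound for one boundary condition, is clean; the paper instead proves the bound for both boundary conditions directly in Proposition~\ref{prop:pt}, which is equivalent but slightly more verbose.
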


Our methods also allow us to understand typical high density
independent sets on bipartite graphs lacking any symmetry between the
sides of the bipartition.  Call a bipartite graph $G$ \emph{vertex
  transitive within each parity class} if for any two vertices $u,v$
in the same parity class (e.g., $u,v\in V_{\even}$) there is an
automorphism $\pi$ of $G$ with $\pi(u)=v$.  For such graphs we
consider the hard-core model with parity-dependent activities
$\lambda_{\even}$ and $\lambda_{\odd}$, in accordance with the fact
that there is no symmetry between the even and odd parity classes. The
partition function is
  \begin{equation*}
    Z_G(\lam_\even,\lam_\odd) =
    \sum_{I \in \mathcal I(G)} \lam_\even^{| I \cap V_\even|} \lam_\odd ^{| I \cap V_\odd|} \,. 
\end{equation*}

On $\mathbb{Z}^{d}$, intuition suggests that a discrepancy between
$\lam_{\even}$ and $\lam_{\odd}$ will lead to a unique Gibbs measure
when $d\geq 2$: if there are enough occupied vertices for a parity
class to be preferred, the class with larger activity will be vastly
preferred. Proving this for all possible activities is largely
open~\cite{van1994percolation,haggstrom1997ergodicity}. When there is
no symmetry between the two parity classes, obtaining a coexistence
result intuitively requires $\lam_{\even}$ and $\lam_{\odd}$ to be
delicately balanced, as the density of the largest independent sets
may not be equal, and more subtly, as the entropy from defects to the
all-even and all-odd configuration are no longer equal.  Establishing
this balance is precisely what Pirogov--Sinai theory was built to
achieve, and our next results show that the combinatorially flexible
version of Pirogov--Sinai theory developed in this paper has the power
to carry this out.

Let $B_{k}(v)$ denote the ball of radius $k$ about a vertex $v$, i.e.,
$B_{k}(v) = \{ w\in V \mid d_{G}(v,w)\leq k\}$, with $d_{G}(v,w)$ the
graph distance between $v$ and $w$ in $G$.  Given a vertex $v$, define
the \emph{free energy} (or \emph{pressure}) of the hard-core model on
$G$ by
\begin{equation}
  \label{eq:fg}
  f_G(\lam_\even,\lam_\odd) = \lim_{n \to \infty} \frac{1}{|E(B_n(v))|}
  \log Z_{B_n(v)} (\lam_\even,\lam_\odd) \,.
\end{equation}
When this limit exists one expects the points of non-analyticity of
the free energy to coincide with points of non-uniqueness.  To ensure
this limit does exist, we will assume that $G$ has at most
\emph{polynomial volume growth}, i.e., there exist $c,\alpha>0$ such
that $|B_{n}(v)|\leq cn^{\alpha}$ for all $n\geq 1$ and all vertices
$v$. The assumption of vertex transitivity within a class ensures the
limit in~\eqref{eq:fg} does not depend on the choice of vertex $v$.

It is natural to parametrize $\lam_{\odd}$ in terms of $\lam_{\even}$
by setting
$\lam_{\odd}=\proplam \lam_{\even}^{\Delta_{\odd}/\Delta_{\even}}$ for
$\proplam\in (\frac12,2)$. Intuitively, this parametrization accounts
for the difference in the density of the all-even and all-odd
independent sets when $\Delta_{\odd}\ne \Delta_{\even}$.  The
restriction of $\proplam$ to $(\frac12,2)$ is largely arbitrary and
what is important is that this interval contains $\proplam=1$.  Write
$\cU$ for the set of possible $\lam_{\odd}$ in this parametrization.
Our main result for graphs that are vertex transitive within a class
follows; the isoperimetric constant $\Ciso$ in the statement exists
and is finite due to the assumptions on $G$, see
Section~\ref{sec:ends-boundaries}.
\begin{theorem}
  \label{thm:biregular}
  Suppose $G$ is bipartite, one-ended, vertex transitive within each
  parity class, and has polynomial volume growth. Suppose also that
  $G$ has a $D$-bounded cycle basis $\cB$. There is a
  $\lam_{\star}=\lam_{\star}(D,\Delta,\Ciso)$ such that if
  $\lambda_{\even} \geq\lambda_{\star}$ and $\lam_\odd\in \cU$, then
  the free energy $f_G(\lam_\even,\lam_\odd)$ is well-defined and
  independent of the vertex $v$ used in its definition. Moreover,
  there is a unique $\lambda_{\odd,c}(G,\lambda_{\even})\in \cU$ such
  that
  \begin{enumerate}
  \item The function $f_G(\lam_\even,\lam_\odd)$ is continuously
    differentiable at pairs of activities
    $(\lam_\even,\lam_\odd )$ different from 
    $(\lam_\even,\lam_{\odd,c}(G,\lam_\even))$.
  \item The critical parameter $\lambda_{\odd,c}$ satisfies
    $\log\lam_{\odd,c} = \frac{\Delta_{\odd}}{\Delta_{\even}}
    \log\lam_{\even} + o_{\lam_{\even}}(1)$.
  \item If $\lambda_{\odd}\neq \lambda_{\odd,c}$ there is a unique
    infinite-volume Gibbs measure.
  \item For the hard-core model on $G$ with activities
    $(\lam_\even,\lam_{\odd,c}(G,\lam_\even))$ phase coexistence
    occurs, and $f_G$ fails to be continuously differentiable at such
    pairs.
  \end{enumerate}
\end{theorem}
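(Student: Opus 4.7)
The plan is to carry out Pirogov--Sinai theory on $G$, using the cycle-basis contours developed in earlier sections. The two ``ground states'' are the all-even and all-odd maximal independent sets. For each one I introduce a truncated partition function, extract an analytic ``metastable'' free energy $f^{q}$ (with $q\in\{\even,\odd\}$), and identify the true free energy as $f_G = \max(f^{\even},f^{\odd})$. The four claims then follow from standard Pirogov--Sinai reasoning together with a monotonicity argument in $\lam_\odd$.

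\textbf{Metastable free energies.} For finite $\Lambda$ and each parity $q$, I first express $Z_\Lambda^{q}(\lam_\even,\lam_\odd)$ with $q$-boundary conditions as a polymer partition function over cycle-basis contours of type $q$, with weights $w^{q}(\gamma)$ built from $(\lam_\even,\lam_\odd)$. Applying the standard Pirogov--Sinai iterative truncation produces modified weights $\widetilde w^{q}(\gamma)$ and a truncated partition function $\widetilde\Xi^{q}_\Lambda$ satisfying the \kotecky--Preiss condition
\begin{equation*}
\sum_{\gamma\ni e} |\widetilde w^{q}(\gamma)|\, e^{|\gamma|} \leq 1 \qquad \text{for every edge } e.
\end{equation*}
Verifying this is essentially the calculation already performed for Theorems~\ref{thm:transitive} and~\ref{thm:matched}: the $D$-bounded cycle basis $\cB$ controls the entropy of contours of given length, large $\lam_\even$ damps the weights, and the parametrization $\lam_\odd = \proplam\,\lam_\even^{\Delta_\odd/\Delta_\even}$ with $\proplam\in(\tfrac12,2)$ keeps the estimates uniform in $\proplam$. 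Combined with vertex transitivity within each parity class and polynomial volume growth, a cluster expansion then yields existence, vertex-independence, and real-analyticity of
\begin{equation*}
f^{q}(\lam_\even,\lam_\odd) \bydef \lim_{n\to\infty} \frac{1}{|E(B_n(v))|} \log\!\left( \lam_q^{|V_q\cap B_n(v)|}\,\widetilde\Xi^{q}_{B_n(v)}(\lam_\even,\lam_\odd) \right).
\end{equation*}

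\textbf{Identifying $f_G$ and the critical curve.} A standard Zahradn\'{\i}k-type argument then shows that when phase $q$ is stable, i.e.\ $f^{q} = \max(f^{\even},f^{\odd})$, the original and truncated partition functions agree up to exponentially small boundary corrections. Hence $f_G = \max(f^{\even},f^{\odd})$, giving item (1) on the open set where one phase is strictly stable, and item (3) because both boundary-condition limits concentrate on the unique stable phase. To locate the critical curve, I use $\partial_{\lam_q} f^{q} = d^{q}/\lam_q$ with $d^{q}$ the typical density of phase $q$, which yields strict monotonicity of $f^{\even}-f^{\odd}$ in $\lam_\odd$ and hence uniqueness of any crossing $\lam_{\odd,c}(\lam_\even)\in\cU$. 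Existence inside $\cU$ follows from the leading asymptotics $f^{q} \sim (\log\lam_q)/\Delta_q$ (since $|V_q|/|E|=1/\Delta_q$), which force $\log\lam_{\odd,c} = (\Delta_\odd/\Delta_\even)\log\lam_\even + o_{\lam_\even}(1)$, giving (2). For item (4), on the critical curve the $\even$- and $\odd$-boundary cluster expansions converge to infinite-volume Gibbs measures with distinct densities, producing phase coexistence; and $\partial_{\lam_\odd} f^{\even}\neq \partial_{\lam_\odd} f^{\odd}$ at the crossing creates a derivative kink in $f_G = \max(f^{\even},f^{\odd})$.

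\textbf{Main obstacle.} The principal difficulty will be maintaining the \kotecky--Preiss condition \emph{uniformly} in $\proplam\in(\tfrac12,2)$: unlike the symmetric settings of Theorems~\ref{thm:transitive} and~\ref{thm:matched}, here neither phase is automatically dominant, so the Pirogov--Sinai truncation must be performed symmetrically and the weight bounds cannot implicitly rely on $\lam_\even = \lam_\odd$. A related point is that identifying $f_G$ with $\max(f^{\even},f^{\odd})$ near the coexistence curve requires controlling ``mixed'' contour configurations, which is where polynomial volume growth enters to bound the number of contours of given size inside a ball of radius $n$.
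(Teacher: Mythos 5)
Your proposal follows the same Pirogov--Sinai blueprint as the paper: build contour models from the cycle-basis framework, introduce truncated weights \`a la Zahradn\'{\i}k, extract metastable free energies for each parity, identify $f_G$ with the maximum, and deduce the phase diagram. The one place you genuinely diverge is part~3. You propose to obtain uniqueness off the critical curve by showing that \emph{both} the even- and odd-boundary finite-volume measures converge to the same (stable) infinite-volume Gibbs measure, and then invoking FKG sandwiching. That is a contour-level argument requiring control of the metastable phase's convergence — technically the most delicate piece of a Zahradn\'{\i}k-style analysis. The paper instead proves part~1 first (continuous differentiability of $f_G$ away from $\lam_{\odd,c}$, via the cluster-expansion derivative bounds) and then applies a lemma of van den Berg--Steif (their Proposition~4.6): if $f_G$ exists, is independent of boundary conditions, and is differentiable at a pair of activities, then the Gibbs measure there is unique. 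The paper explicitly remarks that the contour route you sketch is possible but chose the monotonicity route ``for the sake of efficiency.'' Your route is correct in principle but would demand substantially more work to make rigorous; the paper's shortcut buys a much shorter proof of part~3 at the cost of invoking monotonicity (FKG) rather than arguing purely via contours. Your monotonicity argument for uniqueness of the crossing (via occupation densities $d^q/\lam_q$) is morally the same as the paper's, which instead differentiates $\widehat\psi^{\odd}-\widehat\psi^{\even}$ in the variable $\proplam$ and uses that $\log\proplam$ has derivative bounded away from zero while the cluster-expansion corrections have uniformly small derivatives.
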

At the point $\proplam=1$, the fully occupied even and odd
configurations have equal weight, and hence one expects phase
coexistence to occur for $\proplam\approx 1$. The second conclusion of
the theorem shows this intuition is correct as
$\lam_\even\uparrow\infty$.  Our methods in fact give more detail:
they provide a convergent series representation of
$\log \lambda_{\odd,c} - \frac{\Delta_{\even}}{\Delta_{\odd}}
\log\lambda_{\even}$. By computing terms of this series the equation
of the coexistence curve $\lambda_{\odd,c}(G,\lambda_{\even})$ can be
computed to arbitrarily high accuracy.

In Section~\ref{sec:examples} below we illustrate
Theorems~\ref{thm:transitive}--~\ref{thm:biregular} via several
examples. Before this, however, we state our algorithmic results for
finite graphs. Some related results and context are reviewed in
Sections~\ref{sec:sm} and~\ref{sec:alg}, and the methods behind our
results are discussed in Section~\ref{sec:pi}.

\subsection{Algorithmic Results}
\label{subsecAlgorithms}

There are two natural computational problems associated to a
statistical physics model like the hard-core, Ising, or Potts model on
finite graphs. The first is to compute (or approximate) the partition
function $Z_G$; the second is to (approximately) sample from the Gibbs
measure $\mu_G$.  In general, computing partition functions exactly is
$\#\text{P}$-hard (as hard as computing the number of satisfying
assignments to a boolean satisfiability formula), while the
tractability of approximating partition functions (approximate
counting) depends on the specific model, the parameters, and the class
of input graphs considered.  An approximate counting algorithm is
considered efficient if it outputs an $e^\eps$ multiplicative
approximation to $Z_G$ and runs in time polynomial in the size of $G$
and $1/\eps$. An approximate sampling algorithm is efficient if its
output distribution is within $\eps$ total variation distance of
$\mu_G$, and runs in time polynomial in the size of $G$ and $1/\eps$.
See~\cite{jerrum1986random,
  jerrum2003counting,sinclair1989approximate} and
Section~\ref{sec:alg} below for more details on approximate counting
and sampling.

While approximate counting and sampling for the hard-core model are
NP-hard in general, the complexity of the two tasks for bipartite
graphs is unknown~\cite{dyer2004relative} and defines the complexity
class $\#\text{BIS}$ (counting bipartite independent set).

Our methods give efficient algorithms for a subclass of
$\#\text{BIS}$, instances that arise by imposing boundary conditions
on finite subgraphs of the infinite graphs considered in
Theorems~\ref{thm:transitive}--~\ref{thm:biregular}.

Given a finite subgraph $H$ of $G$, there is a natural notion of even
and odd boundary conditions for the hard-core model on $H$ induced by
a given cycle basis $\cB$ of $G$ (see
Section~\ref{sec:boundary-conditions}), and of the associated
finite-volume partition functions $Z^{\odd}_{H}(\lam)$ and
$Z^{\even}_{H}(\lam)$.  Our implementation of a combinatorial
Pirogov--Sinai theory gives strong analytic control (convergent
cluster expansions) for these partition functions. These convergent
expansions in turn lead to efficient algorithms for approximating the
hard-core partition functions.  This consequence was an important
motivation for this work.
 
To state our algorithmic results precisely, recall that a \emph{fully
  polynomial-time approximation scheme (FPTAS)} for
$Z^{\odd}_{H}(\lambda)$ is a deterministic algorithm that produces a
number $\hat Z^{\odd}_{H}(\lambda)$ such that
$e^{-\epsilon}Z^{\odd}_{H}(\lambda)\leq \hat Z^{\odd}_{H}(\lambda)\leq
e^{\epsilon}Z^{\odd}_{H}(\lambda)$ and runs in time polynomial in
$n/\epsilon$, where $n = |V(H)|$ and $\eps >0$ is an error tolerance
(see, e.g.,~\cite{jerrum2003counting}). If $\mu^{\odd}_{H}$ is the
probability distribution of the hard-core model on $H$ with odd
boundary conditions, then an \emph{efficient approximate sampling
  algorithm} for $\mu^{\odd}_{H}$ is a randomized algorithm that
outputs an independent set with distribution $\hat\mu^{\odd}_{H}$ such
that $\|\mu^{\odd}_{H}-\hat \mu^{\odd}_{H}\|_{\text{TV}}\leq \epsilon$
and runs in time polynomial in $n/\epsilon$, where
$\| \cdot - \cdot\|_{TV}$ denotes the total variation distance.

Formally, one asks about the existence of algorithms satisfying these
guarantees for an infinite collection $\mathcal H$ of finite
graphs. For instance, $\mathcal H$ could be the set of all finite
graphs, or the set of finite graphs of maximum degree at most
$\Delta$.  Our results will apply to sets of finite subgraphs of an
infinite graph $G$ satisfying the conditions of our earlier
theorems. Given a finite subgraph $H$ of $G$, we write $\partial H$
for the edge boundary
$\{ \{v,w\}\in E(G) \mid |\{v,w\}\cap V(H)|=1\}$.

\begin{theorem}
  \label{thm:expansion}
  Suppose $G$ satisfies the conditions of
  Theorem~\ref{thm:transitive}, or satisfies the conditions of
  Theorem~\ref{thm:matched} except possibly the condition on the
  isoperimetric profile.  Let $\mathcal{H}$ be the set of induced
  subgraphs $H\subset G$ that are finite, connected, and have all
  vertices in $V(H)\cap\partial H$ having the same parity. There is a
  $\lambda_{\star}>0$ such that for $\lambda>\lambda_{\star}$ there is
  an FPTAS to compute $Z^{\even}_{H}(\lambda)$ and
  $Z^{\odd}_{H}(\lambda)$ for $H\in\mathcal{H}$ and efficient
  approximate sampling algorithms for $\mu^{\even}_{H}$ and
  $\mu^{\odd}_{H}$ for $H\in\mathcal{H}$.
\end{theorem}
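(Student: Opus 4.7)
The plan is to implement the Pirogov--Sinai contour framework developed in this paper for the finite graphs $H \in \mathcal{H}$ and then exploit convergent cluster expansions algorithmically via the Barvinok--Patel--Regts paradigm. First, for a finite induced subgraph $H$ with uniform-parity outer boundary, I would use the cycle basis $\mathcal{B}$ of $G$ (restricted appropriately to $H$) to define even/odd boundary conditions and a combinatorial notion of contour, in the spirit of Tim\'ar and Georgakopoulos--Panagiotis and as set up in the paper. Each hard-core configuration compatible with, say, odd boundary conditions decomposes uniquely into a ground state together with a collection of pairwise compatible contours. This yields a polymer representation of the form
\begin{equation*}
  Z^{\odd}_H(\lambda) = Z_{H}^{\odd,\text{gs}}(\lambda) \sum_{\{\gamma_1,\dots,\gamma_k\}} \prod_{i=1}^{k} w(\gamma_i),
\end{equation*}
where the sum runs over compatible collections of contours and $w(\gamma)$ is a contour activity controlled by the size of $\gamma$.

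The next step is to prove a Peierls-type bound: for $\lambda$ large, $|w(\gamma)| \leq e^{-c(\Delta) |\gamma|}$ for contours $\gamma$ of the appropriate size, using that removing the occupied vertices inside a contour of size $k$ costs a factor like $\lambda^{-ck}$. Combined with the $D$-boundedness of the cycle basis, which bounds the number of contours of size $k$ containing a fixed edge by $e^{C(D)k}$ (a Kesten-style enumeration bound is where cycle-basis connectivity enters), this yields the Koteck\'y--Preiss condition. Hence the logarithm of the polymer partition function has an absolutely convergent cluster expansion
\begin{equation*}
  \log \bigl( Z^{\odd}_H(\lambda)/Z_H^{\odd,\text{gs}}(\lambda)\bigr) = \sum_{\Gamma} \phi(\Gamma) \prod_{\gamma \in \Gamma} w(\gamma),
\end{equation*}
with the cluster weights decaying exponentially in cluster size.

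With the convergent cluster expansion in hand, the FPTAS follows from the Patel--Regts method: truncate the cluster expansion at order $O(\log(n/\varepsilon))$, and enumerate all clusters up to this size in polynomial time. Here the key ingredient is that clusters of size $k$ containing a fixed edge can be enumerated in time $e^{O(k)}$, which reduces to efficient enumeration of connected subgraphs in the polymer incompatibility graph. This in turn follows from bounded degree of $G$ and the $D$-boundedness of $\mathcal{B}$, which controls the local combinatorics of contours. The ground state factor $Z_H^{\odd,\text{gs}}(\lambda)$ is a trivial monomial in $\lambda$, so the truncated series gives an $e^{\varepsilon}$-approximation to $Z^{\odd}_H(\lambda)$, and symmetrically for $Z^{\even}_H$. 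For approximate sampling from $\mu^{\odd}_H$, I would apply the polymer-model sampling framework of Helmuth--Perkins--Regts / Chen--Galanis--\v Stefankovi\v c--Vigoda: sample a compatible family of contours via a Markov chain or direct procedure on polymers (whose mixing time is polynomial once cluster expansion converges), then sample the hard-core state inside each contour's exterior as the deterministic ground state and inside each contour by a bounded-size brute force.

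The main obstacle, and the step that consumes most of the technical work, is verifying the Peierls bound and the enumeration bound for contours in this cycle-basis setting: unlike $\Z^d$, contours are not geometric interfaces but abstract cycle-basis-connected sets, and one must show both that the hard-core cost of a contour scales linearly in its size and that the number of contours of size $k$ through a fixed edge is at most $e^{O(D,\Delta)\cdot k}$. Once these two combinatorial estimates are established (which is where the hypothesis on the cycle basis is essential, and is presumably the content of the main body of the paper), the algorithmic consequences follow by now-standard polymer-model techniques, and crucially the isoperimetric assumption of Theorem~\ref{thm:matched} is not needed because it was only used to upgrade exponentially decaying correlations into statements about infinite-volume measures.
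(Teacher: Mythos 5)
Your high-level plan (contour model $\to$ Koteck\'y--Preiss $\to$ Patel--Regts truncation, plus enumeration via bounded cycle basis) tracks the paper's strategy, and you correctly identify that the isoperimetric hypothesis of Theorem~\ref{thm:matched} is not needed here (it only enters the Peierls argument for phase coexistence). The contour-enumeration bound you describe is exactly Lemma~\ref{lem:enum_contours}.

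However, there is a genuine gap in the middle of the argument, and it is precisely the step that distinguishes Pirogov--Sinai theory from a simple Peierls argument. The contour representation of Proposition~\ref{lem:bijection} produces a sum over compatible, \emph{matched}, external-$x$ collections of contours, and the matching constraint is \emph{not} a pairwise condition. Your display
\begin{equation*}
  Z^{\odd}_H(\lambda) = Z_{H}^{\odd,\text{gs}}(\lambda) \sum_{\{\gamma_1,\dots,\gamma_k\}} \prod_{i} w(\gamma_i)
\end{equation*}
with the sum over merely compatible collections is therefore not a valid identity, and the object on the right is not an abstract polymer partition function to which Koteck\'y--Preiss applies. To get a genuine polymer model, the paper passes through the external contour representation (Lemma~\ref{lem:excon}) and replaces $w_\gamma$ by the \emph{recursive} weight
\begin{equation*}
  \widetilde{w}_\gamma = w_\gamma\,\frac{\Xi^{\odd}_{\inte_{\odd}\gamma}}{\Xi^{\even}_{\inte_{\odd}\gamma}}
  \quad\text{(for $\gamma$ even, and symmetrically for $\gamma$ odd)},
\end{equation*}
which involves ratios of partition functions of the strictly smaller regions $\inte_\odd\gamma$, $\inte_\even\gamma$. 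Your proposal treats the contour activity as a simple quantity controlled by $|\gamma|$ and does not account for this ratio at all.

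This cascades into two further missing pieces. First, the Peierls bound $|\widetilde w_\gamma|\le e^{-\tau|\gamma|}$ is not a direct counting argument: the ``$\lambda^{-ck}$ cost'' you invoke is only the factor $w_\gamma=\lambda^{-b(\gamma)}$, and even that requires $b(\gamma)\gtrsim|\gamma|/\Delta$, which is exactly where the transitivity/matched-automorphism hypotheses enter (Lemmas~\ref{lem:b_transitive}, \ref{lem:b_matched}). The ratio $\Xi^{\odd}/\Xi^{\even}$ is a priori of order $e^{\Theta(|\inte\gamma|)}$ and could swamp the $\lambda^{-b(\gamma)}$ decay; bounding it by $e^{O(|\gamma|)}$ requires an inductive argument (Lemma~\ref{lem:lambda_bound}) in which the symmetry assumptions are used to cancel the bulk contributions to $\log\Xi^{\odd}_{\inte_\odd\gamma}-\log\Xi^{\even}_{\inte_\odd\gamma}$ (Lemmas~\ref{lem:volume-cancel-transitive}, \ref{lem:volume-cancel-matched}). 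Second, on the algorithmic side, because $\widetilde w_\gamma$ depends on partition functions of the interiors, the weights cannot be read off directly; the paper stresses that the partial order $\prec$ gives an ordering of contours into levels so that $\widetilde w_\gamma$ for a contour of level $k+1$ depends only on data from levels $\le k$, and the algorithm in~\cite{BCHPT} computes these weights recursively by increasing level. Your proposed sampling step (``inside each contour by a bounded-size brute force'') fails for the same reason: interiors of contours can be macroscopically large, so the correct sampling procedure is itself recursive, reapplying the contour expansion inside each sampled contour's interior.

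In short, the combinatorial preliminaries and the final algorithmic step you describe are in line with the paper, but you have replaced the central Pirogov--Sinai mechanism (external contour representation with recursive weights, inductive control of partition-function ratios via symmetry-driven cancellation) with a naive Peierls bound that does not apply in this non-spin-flip-symmetric setting.
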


It is straightforward to extend Theorem~\ref{thm:expansion} to the
setting of Theorem~\ref{thm:biregular} at
$\lambda_{\odd} = \lambda_{\odd,c}$ where phase coexistence
occurs. Further technical work would be required to obtain algorithms
away from the coexistence point; see Section~\ref{sec:algorithms}.

\subsection{Illustrative Examples}
\label{sec:examples}

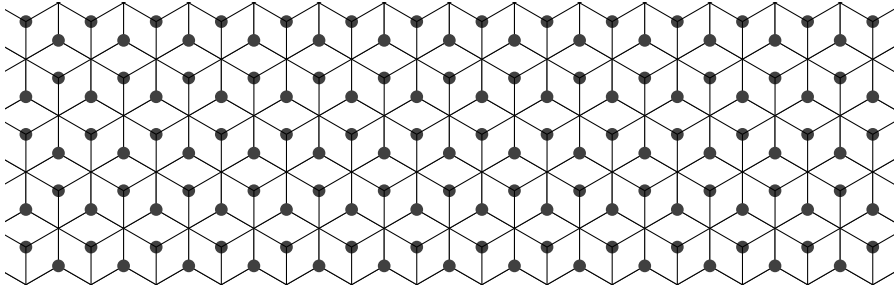
\begin{figure}[t]
  \centering
  \scalebox{1}{
    \begin{tikzpicture}[hexa/.style= {shape=regular polygon,regular
      polygon sides=6,minimum size=1cm, draw,fill=white,inner
      sep=0,rotate=30}]

    \begin{scope}
    \clip (-2,4) rectangle (9.8,7.75);
      
    \foreach \j in {0,...,10}{%
      \pgfmathsetmacro\end{10+\j} 
    \foreach \i in {0,...,\end}{%
    \node[hexa] (h\i;\j) at ({(\i-\j/2)*sin(60)},{\j*0.75}) {};
    \node[even] at ({(\i-\j/2)*sin(60)},{\j*0.75}) {};
    \draw[black] (h\i;\j) ++(0,0) -- +(30:.5cm);
    \draw[black] (h\i;\j) ++(0,0)-- +(150:.5cm);
  \draw[black] (h\i;\j) ++(0,0)-- +(270:.5cm);}}
\foreach \j in {0,...,9}{%
  \pgfmathsetmacro\end{19-\j} 
  \foreach \i in {0,...,\end}{%
  \pgfmathtruncatemacro\k{\j+11}  
  \node[hexa] (h\i;\k) at ({(\i+\j/2-4.5)*sin(120)},{8.25+\j*0.75}) {};
  \node[even] at ({(\i+\j/2-4.5)*sin(120)},{8.25+\j*0.75}) {};
  \draw[black] (h\i;\k) ++(0,0) -- +(30:.5cm);
    \draw[black] (h\i;\k) ++(0,0)-- +(150:.5cm);
    \draw[black] (h\i;\k) ++(0,0)-- +(270:.5cm);}}

\begin{scope}[yshift=.5 cm]
    \foreach \j in {0,...,10}{%
      \pgfmathsetmacro\end{10+\j} 
    \foreach \i in {0,...,\end}{%
%    \node[hexa] (h\i;\j) at ({(\i-\j/2)*sin(60)},{\j*0.75}) {};
    \node[even] at ({(\i-\j/2)*sin(60)},{\j*0.75}) {};
%    \draw[black] (h\i;\j) ++(0,0) -- +(30:.5cm);
%    \draw[black] (h\i;\j) ++(0,0)-- +(150:.5cm);
%    \draw[black] (h\i;\j) ++(0,0)-- +(270:.5cm);
  }}
\foreach \j in {0,...,9}{%
  \pgfmathsetmacro\end{19-\j} 
  \foreach \i in {0,...,\end}{%
  \pgfmathtruncatemacro\k{\j+11}  
%  \node[hexa] (h\i;\k) at ({(\i+\j/2-4.5)*sin(120)},{8.25+\j*0.75}) {};
  \node[even] at ({(\i+\j/2-4.5)*sin(120)},{8.25+\j*0.75}) {};
%  \draw[black] (h\i;\k) ++(0,0) -- +(30:.5cm);
%    \draw[black] (h\i;\k) ++(0,0)-- +(150:.5cm);
%    \draw[black] (h\i;\k) ++(0,0)-- +(270:.5cm);
  }}
\end{scope}
\end{scope}
\end{tikzpicture}}
\caption{A portion of the dice lattice. The vertices in one
  bipartite class are emphasized. 
}
  \label{fig:DL}
\end{figure}

The following examples show the flexibility of our results, as well as
some of their limitations.  For the examples showing phase coexistence
with constant $\lambda$, direct (case-by-case) Peierls-type arguments
are likely possible. Our approach yields stronger results and requires
checking only a few very simple hypotheses. It seems unlikely a direct
Peierls-type argument could establish coexistence as shown in
Example~\ref{ex:biregular}.

\begin{example}
  \label{ex:Zd}
  Theorem~\ref{thm:transitive} applies immediately to $\mathbb{Z}^{d}$
  for $d\geq 2$. This re-proves Dobrushin's result that phase
  coexistence occurs for $\lambda$ large enough. In the bivariate
  activity setting, Theorem~\ref{thm:biregular} implies that there is
  a unique Gibbs measure for large but unequal activities (more
  precisely, $\lam_\even\geq \lam_\star$ and $\lam_\odd\in \cU$).
\end{example}

\begin{example}
  \label{ex:biregular}
  The \emph{dice lattice} is the infinite one-ended bipartite graph a
  portion of which is shown in Figure~\ref{fig:DL}. Even vertices have
  degree three, while odd vertices have degree six. Write
  $\lambda_{3}=\lambda_\even$ and $\lambda_{6}=\lambda_\odd$.  By
  Theorem~\ref{thm:biregular}, if $\lambda_{3}\geq\lambda_{\star}$
  then there is a value of $\lambda_{6}$ satisfying
  $\log \lambda_{6}=2\log\lambda_{3}+o_{\lam_{3}}(1)$ such that there
  is phase coexistence. For any other value of $\lambda_{6}\in \cU$,
  however, there is uniqueness. In fact,
  $\log\lam_{6} = 2\log\lam_{3} + 6\lambda_{3}^{-1} +
  o(\lam_{3}^{-1})$, see Section~\ref{sec:phases},
  Remark~\ref{rem:rhomb}.
\end{example}

\begin{example}
  \label{ex:perc}
  Consider the graph $\Z^{d}\times \{0,1\}$ in which two copies of
  $\Z^{d}$ are stacked on top of one another and connected by vertical
  edges. That is, for each $x\in \Z^{d}$, the vertex
  $(x,i)\in\Z^{d}\times\{0,1\}$ is connected to $(y,i)$ for
  $\{x,y\}\in E(\Z^{d})$, and further $(x,0)$ is also connected to
  $(x,1)$ for all $x\in \Z^{d}$. Elementary plaquettes give a bounded
  cycle basis. Provided $d\geq 2$, this graph is one-ended and
  satisfies $\Phi_{\Z^{d}\times \{0,1\}}(t)\geq \Ciso t^{-1/2}$, see
  Lemma~\ref{lem:iso}. There is a matched automorphism $\pi$ given by
  matching $(x,0)$ to $(x,1)$. Theorem~\ref{thm:matched} thus implies
  that phase coexistence occurs on this graph.

  For $X\subset \Z^{d}$ note that $\pi$ continues to be a matched
  automorphism of the graph $G$ obtained from $\Z^{d}\times \{0,1\}$
  by removing the vertices $\{ (x,i) \mid x\in X, i=0,1\}$. More
  precisely, $\pi$ restricted to the vertices of $G$ is a matched
  automorphism.  If the connected components of $X$ (as an induced
  subgraph of $\Z^{d}$) are uniformly bounded in size, then $G$ has a
  bounded cycle basis. Since $G$ is rough isometric to
  $\Z^{d}\times \{0,1\}$, $\Phi_{G}(t)\geq \Ciso' t^{-1/2}$ for some
  $\Ciso'>0$, see the proof of Lemma~\ref{lem:iso}. Thus $G$ satisfies
  the hypothesis of Theorem~\ref{thm:matched}, and phase coexistence
  occurs for $\lambda\geq \lam_\star$.
\end{example}

\tikzset{ evB/.style={circle,fill=white!50,draw,color=black, minimum size=4pt,
    inner sep=0}}
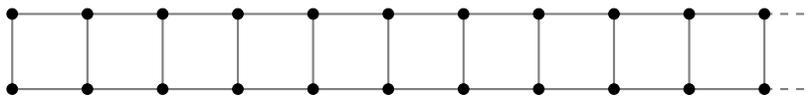
\begin{figure}[h]
  \centering
  \begin{tikzpicture}
    \draw[gray, thick] (0,0) grid (10,1);
    \draw[gray, thick, dashed] (10,0) -- (10.8,0);
    \draw[gray, thick, dashed] (10,1) -- (10.8,1);
    \node[evB] at (0,0) {};
    \node[evB] at (0,1) {};
    \node[evB] at (1,0) {};
    \node[evB] at (1,1) {};
    \node[evB] at (2,0) {};
    \node[evB] at (2,1) {};
    \node[evB] at (3,0) {};
    \node[evB] at (3,1) {};
    \node[evB] at (4,0) {};
    \node[evB] at (4,1) {};
    \node[evB] at (5,0) {};
    \node[evB] at (5,1) {};
    \node[evB] at (6,0) {};
    \node[evB] at (6,1) {};
    \node[evB] at (7,0) {};
    \node[evB] at (7,1) {};
    \node[evB] at (8,0) {};
    \node[evB] at (8,1) {};
    \node[evB] at (9,0) {};
    \node[evB] at (9,1) {};
    \node[evB] at (10,0) {};
    \node[evB] at (10,1) {};
  \end{tikzpicture}
  \caption{A finite portion of the width-two semi-infinite cylinder
    graph.}
  \label{fig:ladder}
\end{figure}
\begin{example}
  \label{ex:cylinder}
  The \emph{width-$k$ (semi-infinite) cylinder graph} is the one-ended
  graph $C_{k}\times \N$, where $C_{k}$ is the cycle on $k$
  vertices. See Figure~\ref{fig:ladder}.  Cylinder graphs with even
  widths are matched automorphic, while cylinder graphs with odd width
  are not.

  The hard-core model does not have a phase transition on cylinder
  graphs, as these graphs are (essentially) one dimensional.
  Nonetheless we obtain a convergent expansions and algorithms for
  \emph{even-width} cylinder graphs by
  Theorem~\ref{thm:expansion}. Phase coexistence does not follow since
  these graphs do not satisfy the expansion condition of
  Theorem~\ref{thm:matched}.

  Our failure to obtain an expansion for odd-width cylinder graphs is
  for good reason: by explicit computation the relevant expansion
  variable for the width-$1$ cylinder graph is $\lambda^{-1/2}$, not
  $\lambda^{-1}$. Perhaps more surprisingly, boundary conditions are
  also relevant: for periodic cylinder graphs of even width there
  cannot be a convergent expansion in powers of $\lam^{-1}$,
  see~\cite[Section~4]{BBPR}.
\end{example}

\begin{example}
  \label{ex:Cayley}
  This example generalizes Example~\ref{ex:Zd}. Let $\mathsf{G}$ be a
  finitely presented group such that (i) $\mathsf{G}$ has an index two
  subgroup $\mathsf{H}\subset \mathsf{G}$, (ii) $\mathsf{G}$ has
  superlinear volume growth, and (iii) $\mathsf{G}$ has at most
  polynomial volume growth. Suppose $h_{1},\dots, h_{n}$ generate
  $\mathsf{H}$, let $g\notin \mathsf{H}$, and set
  $S = \{g, h_{1}g,h_{2}g,\dots, h_{n}g\}$. The Cayley graph $G$
  generated by $S\cup S^{-1}$ is vertex transitive, bipartite, has a
  bounded cycle basis~\cite{timar2007cutsets,timar2013boundary}, has
  bounded degree, and is one-ended (by the assumptions on volume
  growth; see the proof of Lemma~\ref{lem:iso}).  By
  Theorem~\ref{thm:transitive}, there is phase coexistence for the
  hard-core model on $G$ when $\lam$ is sufficiently large. Moreover,
  Theorem~\ref{thm:biregular} implies that phase coexistence does not
  occur if the activities on the two parity classes are unequal (more
  precisely, if $\lam_{\even}$ is sufficiently large and
  $\lam_{\odd}\in\cU$).
\end{example}

\begin{example}
  \label{ex:BHW}
  The following is a non-example. Attach to each vertex $v$ of
  $\Z^{d}$ a binary tree of depth one rooted at $v$; see
  Figure~\ref{fig:BHW}. Following~\cite{BHW} call this graph
  $\Z^{d}_{2}$. For $d\geq 2$ the graph $\Z^{d}_{2}$ is one-ended,
  bipartite, quasitransitive, and has polynomial volume growth, but it
  does not satisfy any of our symmetry assumptions. For $\lambda$
  large enough, there is a unique hard-core Gibbs measure for
  $\Z^{d}_{2}$, see~\cite[Lemma~2.2]{BHW}.
\end{example}

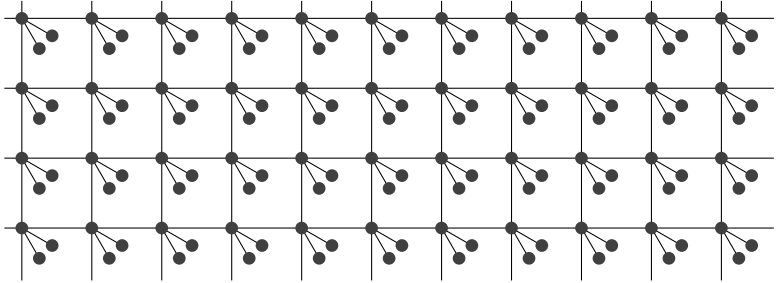
\begin{figure}
  \centering
  \begin{tikzpicture}[scale=.93]
    \draw[white,fill=white] (-.25,-.5) rectangle (10.5,3.5);
    \draw[black] (-.25,-.75) grid (10.75,3.25);
    \foreach \j in {0,...,3}{%
  \foreach \i in {0,...,10}{%
    \draw[black] (\i,\j)  -- +(-30:.5) node[even]{};
    \draw[black] (\i,\j) -- +(-60:.5) node[even]{};
    \node[even] at (\i,\j) {};
   % \node[even] at ((\i,\j) ++(-30:.4)) {};\
  }}
  \end{tikzpicture}
  \caption{A subset of the graph $\Z^2_2$ from Example~\ref{ex:BHW}.}
  \label{fig:BHW}
\end{figure}

\subsection{Context and Related Work: Statistical Mechanics}
\label{sec:sm}
Inspired by conjectures of Benjamini and Schramm~\cite{BS}, there has
been a great deal of interest in understanding the generality in which
Bernoulli bond percolation undergoes a phase transition. This has been
done both by using generalized Peierls-type
arguments~\cite{babson1999cut,timar2007cutsets,georgakopoulos2018analyticity}
as well as other methods~\cite{DCGRSY,EasoHutchcroft2023}. Our use of
a cycle basis was inspired by the work of Georgakopolous and
Panagiotis in their investigation of analyticity properties of bond
percolation~\cite{georgakopoulos2018analyticity}. The utility of a
bounded cycle basis assumption was first realized by
Tim\'{a}r~\cite{timar2007cutsets,timar2013boundary}, who was inspired
by questions posed in~\cite{babson1999cut}.  As remarked
in~\cite{DCGRSY}, phase coexistence results for Bernoulli percolation
can be combined with comparison methods to deduce phase coexistence
for the $q$-state random cluster model with $q\geq 1$. 

The generality of phase coexistence phenomena for other statistical
mechanics models has been less explored. The hard-core model has been
comparatively well-studied, but prior results have relied on strong
lattice-like
assumptions~\cite{JauslinLebowitz,mazel2018high,mazel2019high,mazel2020hard,MSS2024},
strong expansion
conditions~\cite{galvin2006slow,jenssen2020algorithms}, or have
specialized to the study of the hard-core model on
trees~\cite{BHW,galvin2011multistate}.  We note that while we have
only considered the hard-core model, our methods can likely be applied
more broadly without the introduction of significant new ideas. That
is, using the notion of a cycle basis as the combinatorial
underpinning of a generalization of Pirogov--Sinai theory is a general
strategy. Implementing this strategy for spin models should be broadly
similar to the work in this paper, albeit with less subtleties about
the necessary properties of the underlying graph due to the relevant
symmetry being an internal (spin-space) symmetry.

In the setting of $\Z^{d}$ and tori $(\Z/L\Z)^{d}$, Pirogov--Sinai
theory has been used to study finite-size corrections as
$L\to\infty$~\cite{borgs1990rigorous}. For example,
\cite{borgs1990rigorous} defines a natural finite-volume critical
point and determines the discrepancy of this point from the
infinite-volume critical point. It would be interesting if the methods
of the present paper could be extended to investigate finite-size
corrections in greater generally, e.g., for sequences of finite graphs
whose local limits satisfy the hypotheses of our main theorems.
  
There have been recent breakthrough results concerning Bernoulli
percolation on general (finite and infinite) transitive
graphs~\cite{DCGRSY,HutchcroftTointon,EasoHutchcroft,PanagiotisSevero,EasoHutchcroft2023}. These
results have relied on the development of new, non-perturbative
methods not based on contours. The development of non-perturbative
methods for the hard-core model would be very interesting, as would be
extensions to the setting of finite graphs.  For example, building
on~\cite{DCGRSY}, in~\cite{PanagiotisSevero} it was shown that there
is an $\epsilon>0$ such that $p_{c}(G)\leq 1-\epsilon$ for all Cayley
graphs $G$ that are not one-dimensional. In particular, $\epsilon$
does not depend on the degree or a cycle basis condition. An analogous
result for the hard-core model would be very interesting.

Jauslin and He~\cite{JH} have very recently extended the methods
of~\cite{JauslinLebowitz} from lattice-like hard-core models that tile
$\mathbb{R}^{d}$ to lattice-like models that may only partially tile
space, in part inspired
by~\cite{mazel2018high,mazel2019high,mazel2020hard}. Their assumptions
and methods are designed for understanding discretizations of
continuum (Euclidean) models, and are complementary to ours. Our
methods do not apply to their general situation (which may include
non-bipartite graphs), nor do their methods apply in the generality of
the present paper.

\subsection{Context and Related Work: Algorithms}
\label{sec:alg}

When $\lam$ is sufficiently large, Theorem~\ref{thm:expansion} gives
efficient approximate counting and sampling algorithms for the
hard-core model on finite subgraphs of the infinite graphs under
consideration in this paper.  To explain the significance of this
result, we briefly give some background. For any fixed value of
$\lam>0$, the approximate counting and sampling problems for the
hard-core model are computationally hard (NP-hard) in general, i.e.,
when the set of possible input graphs $\mathcal{H}$ consists of
\emph{all} finite graphs.  When restricting the allowed input graphs
to the class of finite graphs of maximum degree $\Delta$, the problems
are computationally hard (no polynomial time algorithms exists unless
NP$=$RP) when
$\lam > \lam_c(\Delta) \approx
\frac{e}{\Delta}$~\cite{sly2010computational,sly2014counting,galanis2011improved},
while efficient algorithms exist when
$\lam <\lam_c(\Delta)$~\cite{weitz2006counting}.  Remarkably, the
critical point $\lam_{c}(\Delta)$ coincides with the activity at which
a phase transition occurs for the hard-core model on the
$\Delta$-regular tree.

Given this, one can ask if there exist structural assumptions on
finite graphs that make the hard-core approximate counting and
sampling problems tractable even when they are hard in the worst-case.
For example, there are efficient algorithms for these problems for the
class of claw-free
graphs~\cite{jerrum1989approximating,matthews2008markov}. This line of
questioning has lead to a significant open problem: do efficient
approximate counting and sampling algorithms exist for the hard-core
model on the class of \emph{bipartite} graphs~\cite{dyer2004relative}?
To date, no efficient algorithms are known, nor is the problem known
to be NP-hard.  Many other approximate counting and sampling problems
with unknown complexity are, however, known to be
\#BIS-hard~\cite{goldberg2007complexity}, that is, as hard as the
problem of approximately counting the number of independent sets in a
bipartite graph. For example, the ferromagnetic Potts model, the Ising
model with arbitrary external fields, and stable matchings are all
known to be \#BIS-hard.  For \emph{bounded degree} bipartite graphs,
approximate counting and sampling in the hard-core model is \#BIS-hard
when $\lam>\lam_c(\Delta)$~\cite{cai2016hardness}.

Intuition suggests that the phase transition phenomenon in bipartite
graphs should make it easier to design algorithms: independent sets
that are mostly even or mostly odd have a relatively simple structure.
This intuition has been made rigorous for some special classes of
graphs: subgraphs of $\Z^d$~\cite{helmuth2020algorithmic} and random
regular and bounded-degree expander
graphs~\cite{jenssen2020algorithms,liao2019counting,chen2022sampling,cannon2020counting}. See
also~\cite{barvinok2017weighted}. The algorithms
of~\cite{helmuth2020algorithmic} for $\Z^d$ made use of Pirogov--Sinai
theory.

Theorem~\ref{thm:expansion} shows that the idea of using
Pirogov--Sinai theory to design algorithms applies far beyond
$\Z^d$. It would be interesting if the methods of this paper could be
extended to finite graphs satisfying appropriate symmetry conditions,
rather than only applying to finite subgraphs of infinite graphs
satisfying symmetry conditions. Furthermore, it would be very
interesting to show \#BIS-hardness for classes of bipartite graphs
satisfying some of the symmetry conditions assumed here, or even for
graphs with bounded cycles bases.

\subsection{Proof Ideas and Outline of Paper}
\label{sec:pi}

The central new idea in this paper is that the notion of a \emph{cycle
  basis} is enough to drive a generalization of the combinatorial
aspects of a Pirogov--Sinai analysis. Let us briefly indicate how this
is carried out. In Section~\ref{sec:contours} our main result is
Proposition~\ref{lem:bijection}, which gives a representation of the
hard-core model on general one-ended bipartite graphs in terms of
\emph{contour models}. Intuitively, which contours separate
even-occupied and odd-occupied regions from one another. Our
definition of contours relies on a given cycle basis. In conjunction
with a one-endedness assumption, there is a natural notion of what the
`outside' of a contour is. This leads to a nested (partially ordered)
structure on the set of contours, and this partial order is crucial in
subsequent steps of our analysis. The relevance of partially ordering
contours was first made explicit in~\cite{borgs2012tight}, which used
tools from algebraic topology in the context of $d$-dimensional
tori. Prior works had relied on topological properties of $\R^{d}$. By
contrast, our approach is purely combinatorial. The use of a cycle
basis to replace topological argument by combinatorial arguments was
first recognized by Tim\'{a}r, and we have drawn inspiration from his
work and that of
Georgakopoulos--Panagiotis~\cite{timar2007cutsets,georgakopoulos2018analyticity}.

Carrying out a Pirogov--Sinai analysis requires further ingredients to
obtain analytic control of partition functions. It is for this reason
that our main results assume the existence of a \emph{bounded} cycle
basis. More significantly, we require a ``Peierls estimate'' that
measures the cost of a contour. This requires further structural
assumptions beyond being bipartite and one-ended. Sufficient criteria
are developed in Section~\ref{sec:peierls} These criteria are
hypotheses on the existence of an appropriate spatial symmetry. As
discussed in the next section, the determination of more general
sufficient criteria would be interesting.

Given the above, in Section~\ref{sec:convergence} we show how the
analytic steps in Pirogov--Sinai theory can be adapted to gain the
desired control of partition functions. One significant difference
compared to $\mathbb{Z}^{d}$ is that control of the bulk free energy
of the model is not immediate; on $\mathbb{Z}^{d}$ this is a
consequence of vertex transitivity. The symmetry assumptions we have
considered in obtaining Peierls estimates are, however, enough to
obtain the needed control.

Given analytic control (i.e., convergent series expansions), the
derivation of statistical mechanical and algorithmic consequences
follows from arguments familiar from the lattice setting. This is done
in Section~\ref{sec:applications}.

\section{Preliminaries}
\label{sec:background}

\subsection{Graph Notation and Terminology}
\label{sec:graphterm}

A graph $G=(V,E)$ is \emph{bipartite} if there is a partition
$V=V_{\odd}\sqcup V_{\even}$ of the vertices such that all edges
contain exactly one \emph{odd} vertex in $V_{\odd}$ and one
\emph{even} vertex in $V_{\even}$. We sometimes refer to even and odd
as the \emph{parity} of a vertex. An edge $e=\{x,y\}$ is
\emph{incident} to a vertex $v$ if $v=x$ or $v=y$, and we sometimes
call $x,y$ the \emph{endpoints} of $e$. Vertices $x$ and $y$ are
\emph{adjacent} if they appear in an edge together. A \emph{path} is a
sequence of adjacent vertices. A path is \emph{simple} if no vertex is
repeated. A path is a \emph{cycle} if it begins and ends at the same
vertex, no other vertex is repeated, and it has length at least
three. The \emph{length} of a path or cycle is the number of edges it
contains. We write $|C|$ for the length of a cycle $C$.

Given a finite set $H\subset V(G)$ the subgraph \emph{induced} by $H$
has edge set $E(H) = \{ \{x,y\} \in E(G) \mid x,y\in H\}$. $H$ is
\emph{connected} if the subgraph induced by $H$ is connected. For a
collection of edges $E' \subseteq E(G)$, $G \setminus E'$ will denote
the graph $G'$ with vertex set $V(G)$ and edge set
$E(G) \setminus E'$.

An \emph{independent set} of a graph $G$ is a subset of vertices no
two of which are adjacent. The set of independent sets of $G$ is
denoted $\cI(G)$.

An \emph{automorphism} of a graph $G=(V,E)$ is a bijective map
$\pi\colon V\to V$ that maps edges to edges, i.e., such that
$\{u,v\}\in E$ if and only if $\{\pi(u),\pi(v)\}\in E$. A graph is
\emph{vertex transitive} if for any two vertices $v_1$ and $v_2$,
there is an automorphism $\pi$ with $\pi(v_1) = v_2$. A graph is
\emph{vertex transitive within each parity class} if for any
$v_{1},v_{2}\in V_{\odd}$ or $v_{1},v_{2}\in V_{\even}$ there is an
automorphism $\pi$ with $\pi(v_{1})=v_{2}$.  A graph is \emph{matched
  automorphic} if it possesses a \emph{matched automorphism}, meaning
an automorphism $\pi$ such that
$\{ \{v,\pi(v)\} \}_{v\in V_{\odd}}\subset E$ is a perfect matching of
$G$, i.e., a subset of edges such that every vertex is contained in
exactly one such edge.

\subsection{Ends, Boundaries, and Isoperimetry}
\label{sec:ends-boundaries}

An infinite graph $G$ is \emph{one-ended} if for any finite vertex set
$S$, $G \setminus S$ has only one infinite component. We will only
consider one-ended infinite graphs in this paper.

Let $\Lambda \subseteq G$ be a finite subgraph of $G$.  Let
$\partial \Lambda$ denote the \emph{(edge) boundary} of $\Lambda$,
meaning all edges of $G$ with exactly one vertex in $\Lambda$. 
The \emph{isoperimetric profile} $\Phi_{G}\colon (0,\infty)\to
\cb{0,1}$ of $G$ is given by 
\begin{equation}
  \label{eq:isopf}
  \Phi_{G}(t) = \inf \left\{ \frac{ |\partial\Lambda|}{|\Lambda|} :
  0<|\Lambda|\leq t, \Lambda\subset G\right\}.
\end{equation}

A graph is \emph{quasi-transitive} if the set of orbits of $V$ under
the automorphism group of $G$ is finite; see,
e.g.,~\cite[p.234]{LyonsPeres}. Transitive graphs have a single orbit,
and graphs that are transitive within each partite class have at most
two orbits. The next two lemmas summarize important geometric facts
about quasi-transitive graphs.
\begin{lemma}
  \label{lem:iso}
  Suppose $G$ is infinite, one-ended, quasi-transitive, and has
  maximum degree $\Delta$.  Then there is a constant $\Ciso>0$ such that
  \begin{equation}
    \label{eq:iso}
    \Phi_{G}(t) \geq \Ciso t^{-1/2}.
  \end{equation}
\end{lemma}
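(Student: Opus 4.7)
The strategy is to invoke the classical dichotomy for volume growth of quasi-transitive graphs and apply known isoperimetric bounds in each case, using one-endedness to rule out the degenerate case of linear growth. By Trofimov's extension of Gromov's polynomial growth theorem to quasi-transitive graphs, exactly one of the following holds: either $|B_{n}(v)| \asymp n^{d}$ uniformly in $v$ for some integer $d \geq 1$ (polynomial growth of degree $d$), or $|B_{n}(v)|$ grows exponentially in $n$.

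In the polynomial growth case I would invoke the Coulhon--Saloff-Coste isoperimetric inequality, which for a quasi-transitive graph of polynomial growth of degree $d$ yields $\Phi_{G}(t) \geq c\, t^{-1/d}$ for some $c = c(G,\Delta) > 0$. The original argument is formulated for Cayley graphs but extends to the quasi-transitive setting by essentially the same F\o{}lner-type reasoning; I would cite or reproduce this extension. It then remains only to verify $d \geq 2$. If instead $d = 1$, standard structure theory for quasi-transitive graphs (linear growth forces quasi-isometry with $\Z$ and hence two-endedness) contradicts the one-endedness hypothesis. Thus $d \geq 2$, and the inequality $t^{-1/d} \geq t^{-1/2}$ for $t \geq 1$ yields~\eqref{eq:iso}.

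In the exponential growth case the available bound is much stronger than what is needed. If $G$ is non-amenable then $\Phi_{G}(t) \geq c > 0$ uniformly in $t$, while if $G$ is amenable with exponential growth, Coulhon--Saloff-Coste yields $\Phi_{G}(t) \geq c/\log(t+1)$; in either subcase the bound dominates $t^{-1/2}$ for $t \geq 1$, so \eqref{eq:iso} holds after appropriately choosing $\Ciso$ (and noting $\Phi_{G}(t)\geq 1/t$ trivially from singleton subsets, which handles any small-$t$ regime). The main technical obstacle is verifying the Coulhon--Saloff-Coste inequality in the quasi-transitive (rather than Cayley) setting; the one-endedness hypothesis enters only to exclude $d = 1$ in the polynomial growth case, but it is indispensable there, as otherwise the claim fails outright for $\Z$, where $\Phi_{\Z}(t) \asymp 1/t$.
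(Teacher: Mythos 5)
Your proposal is correct in substance but takes a genuinely different route from the paper. The paper first reduces to the transitive (and then Cayley) case by invoking the rough-isometry stability of the isoperimetric profile: quasi-transitive graphs are rough isometric to transitive ones, which in the polynomial-growth regime are rough isometric to Cayley graphs, and the profile $\Phi_G$ changes only by multiplicative constants under such maps. It then applies \cite[Lemma~10.46]{LyonsPeres}, which bounds $|\partial K|/|K|$ from below by $c/R(2|K|)$ with $R$ the ball-radius function, and closes the argument with the Cayley-graph volume dichotomy of \cite[Theorem~7.18]{LyonsPeres} (quasi-isometric to $\Z$, or polynomial growth of degree at least two) to get $R(2t) \lesssim t^{1/2}$. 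You instead go through Trofimov's dichotomy and the Coulhon--Saloff-Coste isoperimetric inequality, ruling out degree-one growth via one-endedness in the same way. Both arguments work; the paper's choice of using rough isometry to reduce to Cayley graphs is precisely the clean way to dispatch the issue you flag as the ``main technical obstacle'' (extending CSC beyond Cayley graphs). Had you combined your CSC argument with the rough-isometry invariance of $\Phi_G$ (which you are implicitly using), that step would be fully closed and your proof would be complete. One cosmetic point: the exponential-growth branch can be folded into the polynomial one under the paper's standing polynomial-volume-growth assumptions elsewhere, but it is harmless to treat it separately as you do.
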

\begin{proof}
  The proof uses the notion of a \emph{rough isometry}. This
  is a map $\Psi$ from one metric space $(X,d_{X})$ to another metric
  space $(Y,d_{Y})$ for which there exists an $\alpha\geq 1$ and
  $\beta\geq 0$ such that
  \begin{enumerate}[noitemsep]
  \item For all $x,y\in X$,
    \begin{equation*}
      \alpha^{-1}d_{X}(x,y)-\beta\leq d_{Y}(\Psi(x),\Psi(y))\leq
      \alpha d_{X}(x,y) + \beta.
    \end{equation*}
  \item For all $y\in Y$, there is an $x\in X$ such that
    $d_{Y}(\Psi(x),y)\leq\beta$.
  \end{enumerate}
    
  For an introduction to rough isometries see,
  e.g.,~\cite[Section~3]{Woess}. Here we will only need the following
  facts: (i) any quasi-transitive graph $G$ is rough isometric to a
  transitive graph $G'$, (ii) any vertex transitive graph $G$ of
  polynomial volume growth is rough isometric to a Cayley graph (iii)
  for two rough isometric graphs $G$ and $G'$ there are constants
  $a,A>0$ such that the isoperimetric profiles $\Phi_{G}$ and $\Phi_{G'}$ of
  satisfy $a\Phi_{G'}(At)\leq \Phi_{G}(t)\leq A\Phi_{G'}(at)$ for all
  $t>0$, and (iv) rough isometries preserve the number of ends of a
  graph. Proofs of (i), (iii) and (iv) can be found
  in~\cite[Section~3]{Woess}, and a proof of (ii) can be found
  in~\cite[Section~7.9]{LyonsPeres}.

  By the facts above, it suffices to establish~\eqref{eq:iso} in the
  transitive setting. Let $\Lambda=(K,F)\subset G$ be a finite
  subgraph of $G$. By~\cite[Lemma~10.46]{LyonsPeres} (and the
  discussion preceding this lemma) and the assumption of bounded
  degree, there is a $c_{1}$ such that
  \begin{equation*}
    \frac{|\partial K|}{|K|} \geq \frac{c_{1}}{R(2|K|)}
  \end{equation*}
  where $R(t)$ denotes the smallest radius of a ball in $G$ that
  contains $t$ vertices. What remains is to show that our one-ended
  assumption implies there is a $c_{2}$ such that
  $R(2t)\leq c_{2}t^{1/2}$.  By the discussion
  preceding~\cite[Lemma~10.46]{LyonsPeres}, towards
  proving~\eqref{eq:iso} we may assume $G$ has at most polynomial
  volume growth. The conclusion that $R(2t)\leq c_{2}t^{1/2}$ now
  follows from the preceding paragraph, as the desired inequality is
  true for Cayley graphs of polynomial volume growth -- this is the
  content of~\cite[Theorem~7.18]{LyonsPeres}, as our assumption
  of one-endedness rules out being almost isomorphic to $\Z$ in the
  alternative presented by this theorem.
\end{proof}

\begin{lemma}
  \label{lem:vgrowth}
  Suppose $G$ is infinite, quasi-transitive, and has at most
  polynomial volume growth. Then there exist $c,C>0$ and  $d\in \N$
  such that $cn^{d}\leq |B_{n}(v)|\leq Cn^{d}$, and hence
  \begin{equation}
    \label{eq:nonexpball}
    \lim_{n\to\infty} \frac{|\partial B_{n}(v)|}{|B_{n}(v)|} = 0.
  \end{equation}
\end{lemma}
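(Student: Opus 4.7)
The plan is to first reduce the volume-growth statement to the case of Cayley graphs via the rough-isometry facts already recorded in the proof of Lemma~\ref{lem:iso}. Since $G$ is quasi-transitive and has polynomial volume growth, fact (i) there says $G$ is rough isometric to a transitive graph, and fact (ii) says this transitive graph is in turn rough isometric to a Cayley graph $\Gamma$ of a finitely generated group. By Gromov's theorem on groups of polynomial growth, this group is virtually nilpotent, and the Bass--Guivarc'h formula then yields the sharp asymptotic $|B_n^{\Gamma}(e)| = L n^d (1 + o(1))$ for some $L > 0$ and some integer $d \in \N$. Transferring back to $G$ via fact (iii) (rough isometries preserve volume growth up to multiplicative constants while preserving the exponent) gives the two-sided estimate $c n^d \leq |B_n(v)| \leq C n^d$ claimed in the first half.

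For the boundary-to-volume ratio I would leverage the stronger asymptotic $|B_n(v)| \sim L(v)\, n^d$ for quasi-transitive graphs of polynomial growth, not merely the two-sided bound. This follows via the same rough isometry from Pansu's sharp ball-growth theorem for Cayley graphs of virtually nilpotent groups, combined with the structure theory of Trofimov/Losert identifying the exponent $d$ and an orbit-dependent constant $L(v)>0$. From this asymptotic one gets $|B_{n+1}(v)|/|B_n(v)| \to 1$. Since every edge of $\partial B_n(v)$ has its outer endpoint in the shell $B_{n+1}(v) \setminus B_n(v)$, we have
\[
|\partial B_n(v)| \leq \Delta \bigl(|B_{n+1}(v)| - |B_n(v)|\bigr),
\]
and dividing by $|B_n(v)|$ and passing to the limit gives $|\partial B_n(v)|/|B_n(v)| \to 0$.

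The step I expect to be the main obstacle is the sharp growth asymptotic $|B_n(v)| \sim L(v) n^d$. The two-sided polynomial bound obtained by rough isometry does not by itself imply $|B_{n+1}|/|B_n| \to 1$: one can exhibit monotone sequences $b_n$ with $cn^d \leq b_n \leq Cn^d$ whose successive ratios oscillate (for instance, $b_n$ constant on dyadic blocks and jumping by a factor of $2^d$ at each block boundary). Overcoming this therefore requires going beyond pure rough-isometric comparison of volumes, invoking Gromov--Bass--Guivarc'h--Pansu at the Cayley-graph level and then transferring the sharp asymptotic to the quasi-transitive setting. Once that ingredient is in place, the remainder of the argument is the elementary edge-boundary estimate above.
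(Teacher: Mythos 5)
Your core observation---that the two-sided bound $cn^d \leq |B_n(v)| \leq Cn^d$ alone does not imply $|\partial B_n(v)|/|B_n(v)| \to 0$---is correct, and your abstract counterexample (a monotone sequence jumping by a factor $2^d$ across dyadic blocks) illustrates the gap cleanly. The paper's own proof is very terse at exactly this point: it reduces to Cayley graphs, cites a reference for the existence of $c,C,d$, and then asserts the boundary-to-volume limit via ``and hence.'' You are right that this last step needs more than the two-sided bound.

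Where your proposal is slightly off is in the claim that the sharp asymptotic $|B_n(v)| \sim L(v)\,n^d$ ``follows via the same rough isometry from Pansu's sharp ball-growth theorem.'' Rough isometries preserve volume growth only up to multiplicative constants; they do not transport the leading constant $L$, so Pansu on a roughly isometric Cayley graph does \emph{not} yield the sharp asymptotic on $G$ by rough-isometric comparison alone. You correctly flag this as the main obstacle, and indeed it is where the work lies. The resolution requires a result formulated directly for (quasi-)transitive graphs rather than transferred by rough isometry: for instance, the sphere-volume estimate $|B_n \setminus B_{n-1}| = O(|B_n|/n)$ known for vertex-transitive graphs of polynomial growth, or a Pansu-type sharp asymptotic obtained through the finer structure in Trofimov's theorem, which provides a quotient map onto a nilpotent Cayley graph with uniformly bounded fibers and hence a comparison of ball sizes up to $1+o(1)$, not merely up to constants. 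With either ingredient, your elementary estimate $|\partial B_n(v)| \leq \Delta\bigl(|B_{n+1}(v)| - |B_n(v)|\bigr)$ closes the argument exactly as you describe. In short: your diagnosis of the gap is correct, your route is the right one, but the rough-isometry transfer as literally stated would not suffice, and the fix is to invoke a sphere- or ball-volume theorem proved on the graph $G$ itself.
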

\begin{proof}
  From the proof of Lemma~\ref{lem:iso}, it suffices to consider
  Cayley graphs of at most polynomial volume growth. In this setting
  the existence of $c,C,d$ is well-known, see, e.g.,
  \cite[Section~1.1]{EasoHutchcroft2023}.
\end{proof}

\subsection{Cycle spaces, cycle bases, and basis connectivity}
\label{sec:cycle-space-cycle}

Let $G = (V,E)$ be a finite or infinite graph.  The \emph{edge space
  $\cE (G)$} of $G$ is the vector space $\mathbb{Z}_2^{E}$. The
\emph{cycle space $\cE_\cC (G) \subset \cE (G)$} is the subspace of
$\cE (G)$ spanned by the indicator vectors of cycles of $G$.  Let
$\cB$ be a collection of cycles in $G$ whose corresponding vectors
span $\cE_\cC (G)$.  We call $\cB$ a \emph{cycle basis}. Despite the
terminology, there is no linear independence condition on $\cB$ (but
`spanning set for the cycle space' is unwieldy).  A cycle basis $\cB$
is \emph{$D$-bounded} if for any edge $e \in E(G)$, the number of
edges $e'\neq e$ that are in a common basis cycle with $e$ is at most
$D$, uniformly over $e$.  Formally, a cycle basis $\cB$ is $D$-bounded
if
\begin{equation}
  \label{eq:bcb}
  \sup_{e\in E} \big| \set{e'\neq e\mid \text{there exists $B\in\cB$ such
  that $e,e'\in E(B)$}}\big| \le  D \,.
\end{equation}

A subset $E'\subset E$ of edges is \emph{basis connected (with respect
  to a cycle basis $\cB$)} if for every non-trivial bipartition
$E' = E_{1}\sqcup E_{2}$ of $E'$ there is a cycle $C\in \cB$ such that
$C\cap E_{1}$ and $C\cap E_{2}$ are both non-empty. We have
slightly abused notation by writing $C$ in place of $E(C)$ above; when
there is no risk of confusion we will do this in what
follows. Unless it is important to distinguish a particular cycle
basis $\cB$, we will just write `basis connected' in what follows. 

Let $G$ be an infinite one-ended graph. For a finite set
$H \subseteq V(G)$, let $\binf H\subset\partial H$ be all edges that
have one vertex in $H$ and one vertex in the unique infinite
component of $G \setminus H$. We will need the following facts about
$\binf H$.

\begin{lemma}\label{lem:bdry_cycle_even}
  Let $G$ be an infinite one-ended graph and let $H \subseteq V(G)$ be
  a finite set. Every cycle $C$ in $G$ must contain an even number of
  edges of $\binf H$.
\end{lemma}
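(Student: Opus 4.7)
The plan is to reduce the statement to the elementary observation that every cycle crosses the edge boundary of any vertex set an even number of times. The key step is to enlarge $H$ to a set whose edge boundary is exactly $\binf H$.

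Let $C_{\infty}$ denote the unique infinite component of $G\setminus H$ (which exists and is unique by one-endedness and finiteness of $H$), and let $F_{1}, F_{2},\ldots$ be the (at most countably many) finite components of $G\setminus H$. Define
\begin{equation*}
  S \bydef H \cup \bigcup_{i} V(F_{i}) = V(G)\setminus C_{\infty}.
\end{equation*}
First I would verify that $\binf H = \partial S$, where $\partial S$ denotes the set of edges with exactly one endpoint in $S$. The inclusion $\binf H \subseteq \partial S$ is immediate since every edge in $\binf H$ goes from $H\subset S$ to $C_{\infty}=V\setminus S$. For the reverse inclusion, suppose $e=\{u,v\}\in\partial S$ with $u\in S$ and $v\in C_{\infty}$. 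If $u$ were not in $H$, then $u$ would lie in some finite component $F_{i}$, and $e$ would be an edge of $G\setminus H$ joining $u$ to $v\in C_{\infty}$; but then $u$ and $v$ would lie in the same component of $G\setminus H$, contradicting $F_{i}$ being finite and $C_{\infty}$ being infinite. So $u\in H$ and $v\in C_{\infty}$, giving $e\in\binf H$.

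Next I would invoke the general fact that any cycle $C$ in any graph crosses $\partial S$ an even number of times. This is a standard parity argument: traversing the cyclic sequence $v_{0},v_{1},\ldots,v_{k}=v_{0}$ of vertices of $C$, each edge either stays on one side of the bipartition $V=S\sqcup (V\setminus S)$ or crosses it, and since the walk returns to its starting vertex (hence starting side), the number of crossings must be even. Combining this with $\binf H=\partial S$ yields that $C$ contains an even number of edges of $\binf H$.

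The only nontrivial ingredient is recognizing that $\binf H$ is the boundary of $S$ rather than of $H$ itself, which is the point where one-endedness enters (to guarantee a unique infinite component, and hence that the ``fattening'' $S$ is well-defined and finite is not needed here; finiteness of $S$ plays no role, only that $V\setminus S$ is precisely the infinite component). I do not anticipate a serious obstacle; the main care is just in handling the edges going from $H$ into finite components of $G\setminus H$, which lie in $\partial H$ but not in $\binf H$ and correctly do not appear in $\partial S$ either.
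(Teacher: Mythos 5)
Your proof is correct and follows essentially the same route as the paper: you enlarge $H$ to $S = V(G)\setminus C_\infty$ (the paper's $B$), verify that $\binf H = \partial S$, and then apply the standard parity argument that a cycle crosses any vertex bipartition an even number of times. The only difference is that you spell out the verification of $\binf H = \partial S$, which the paper states as an observation.
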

\begin{proof}
  Let $A$ be the subset of vertices of $G$ in the unique infinite
  component of $G\setminus H$, and let $B=V\setminus A$. Observe that
  $\binf H$ is exactly the set of edges between $A$ and $B$. The lemma
  follows, as $A$ and $B$ partition $V(G)$, and every cycle must cross
  between the two sets an even number of times.
\end{proof}

The following is a key property relating basis connectivity and
one-endedness. It is essentially a special case
of~\cite[Lemma~2]{timar2013boundary}; we give the short proof for the
convenience of the reader. Recall the notion of $H\subset V$ being
connected from Section~\ref{sec:graphterm}.

\begin{lemma}\label{lem:bdry_basis_conn}
  Let $G$ be an infinite one-ended graph with a cycle basis $\cB$. For
  any finite connected set $H \subset V(G)$, $\binf H$ is
  basis-connected.
\end{lemma}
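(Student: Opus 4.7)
The plan is to argue by contradiction: suppose some non-trivial bipartition $\binf H = E_1 \sqcup E_2$ has the property that no basis cycle intersects both $E_1$ and $E_2$, then exhibit a cycle in $G$ whose edges in $\binf H$ cross the partition exactly once in $E_1$, which contradicts the parity information coming from Lemma~\ref{lem:bdry_cycle_even} and the way basis cycles combine.

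First I would construct an explicit ``crossing'' cycle. Pick $e_1 \in E_1$ and $e_2 \in E_2$, write $e_i = \{v_i, u_i\}$ with $v_i \in H$ and $u_i$ in the (unique) infinite component $A$ of $G \setminus H$. Because $H$ is connected, there is a path $P_H \subseteq H$ from $v_1$ to $v_2$; because $G$ is one-ended, $u_1$ and $u_2$ both lie in $A$, so there is a path $P_A$ in the subgraph induced by $A$ from $u_1$ to $u_2$. Concatenating $e_1$, $P_A$, $e_2$, $P_H$ yields a closed walk; a standard reduction extracts from it a cycle $C$ (possibly after taking a symmetric difference of simple cycles if one prefers to work directly in $\cE_\cC(G)$) whose only edges in $\binf H$ are $e_1$ and $e_2$. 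In particular $|C \cap E_1| = 1$ is odd.

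Next I would exploit the basis assumption. Since $C \in \cE_\cC(G)$, we may write $C = C_1 \oplus \cdots \oplus C_k$ (symmetric difference) with each $C_i \in \cB$. By Lemma~\ref{lem:bdry_cycle_even}, every $C_i$ has $|C_i \cap \binf H|$ even. By the contradiction hypothesis, $C_i \cap \binf H$ is contained entirely in $E_1$ or entirely in $E_2$, so $|C_i \cap E_1|$ is either $0$ or $|C_i \cap \binf H|$, hence in both cases even. Since symmetric difference of sets of even size has even size, $|C \cap E_1| = \bigl|\bigoplus_i (C_i \cap E_1)\bigr|$ must be even, contradicting the previous paragraph. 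Thus $\binf H$ is basis connected.

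The main technical point is the construction of the ``crossing'' cycle with exactly two edges in $\binf H$; this is the only place where the hypotheses that $H$ is connected and that $G$ is one-ended enter (giving the two halves of the cycle inside $H$ and inside the infinite component, respectively). Once that cycle is in hand, the rest is a straightforward parity argument in $\mathbb{Z}_2^{E}$. If one wanted to avoid the informal ``extract a simple cycle'' step, the argument can be phrased entirely in the edge space: the closed walk gives an element of $\cE_\cC(G)$ whose $\binf H$-component is $\{e_1,e_2\}$, which already suffices for the parity contradiction without naming a single simple cycle.
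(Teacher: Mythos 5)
Your proof is correct and takes essentially the same approach as the paper: both construct a crossing cycle through $e_1$ and $e_2$ using a path inside $H$ and a path in the infinite component, then apply the parity constraint from Lemma~\ref{lem:bdry_cycle_even} to the basis decomposition. The only cosmetic differences are that you phrase the parity step by contradiction while the paper argues directly, and you note the optional shortcut of working in $\cE_\cC(G)$ rather than extracting a simple cycle.
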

\begin{proof}
  If $|\binf H|=1$ there is nothing to show, so assume
  $|\binf H|\geq 2$. Let $E_1 \sqcup E_2$ be a non-trivial bipartition
  of $\binf H$.  Choose $e_1 \in E_1$ and $e_2 \in E_2$,
  $e_{i}=\{v_{i},u_{i}\}$ with $v_{i}\in H$ and $u_{i}\notin H$.

  First we construct a cycle $K$ containing $e_1$, $e_2$, and no other
  edges of $\binf H$.  Because $H$ is connected, we can find a simple
  path from $v_{1}$ to $v_{2}$ within $H$.  Because $G$ is one-ended,
  we can also find a simple path from $u_{1}$ to $u_{2}$ in the
  infinite component of $G \setminus H$. Concatenating these paths
  together with $e_1$ and $e_2$ forms a cycle.

  Decompose $K$ as a sum of cycles $C_i \in \cB$, $K = \sum_i C_i$. By
  Lemma~\ref{lem:bdry_cycle_even}, each of these cycles $C_i$
  intersects $\binf H$ an even number of times. Since $K$ has exactly
  one edge in $E_1$ and exactly one edge in $E_2$, it follows that at
  least one of the cycles $C_i$ (say $C_{1}$) must intersect
  $\binf E_1$ an odd number of times. Because the total number of
  intersections of $C_{1}$ with $\binf H$ must be even, the cycle
  $C_{1}$ must also intersect $E_2$ an odd number of times.  Thus
  $C_{1}$ has non-empty intersection with both $E_{1}$ and $E_{2}$,
  and we conclude that $\binf H$ is basis-connected.
\end{proof}

If $\pi$ is an automorphism of $G$ and
$\cB$ is a cycle basis, let $\pi\cB$ be the set of images of cycles of
$\cB$ under $\pi$. We say $\cB$ is
\emph{$\pi$-invariant} if $\pi\cB = \cB$. If $\cB$ is invariant
under all automorphisms of $G$, then we say $\cB$ is
\emph{automorphism invariant}. The next lemma allows us to restrict
attention to automorphism invariant cycle bases.
\begin{lemma}
  \label{lem:BAI}
  Suppose $G$ has maximum degree $\Delta$. If $\cB$ is a bounded cycle
  basis for $G$, then there exists a bounded and automorphism invariant
  cycle basis $\cB'$ of $G$.
\end{lemma}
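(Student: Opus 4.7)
The natural construction is to set $\cB' = \bigcup_{\pi \in \mathrm{Aut}(G)} \pi\cB$, where $\pi\cB = \{\pi C : C \in \cB\}$ and $\pi C$ denotes the image of the cycle $C$ under the automorphism $\pi$. By construction $\cB'$ is automorphism invariant: for any $\sigma \in \mathrm{Aut}(G)$, $\sigma \cB' = \bigcup_{\pi}(\sigma\pi)\cB = \cB'$. Since $\cB \subseteq \cB'$ and $\cB$ already spans the cycle space $\cE_\cC(G)$, the enlarged set $\cB'$ is also a cycle basis.

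The only nontrivial step is checking that $\cB'$ remains bounded. The key observation is that the $D$-boundedness of $\cB$ forces every cycle $C \in \cB$ to have length at most $D+1$: all edges of $C$ other than any fixed edge $e \in C$ share a basis cycle with $e$, of which there are at most $D$. Automorphisms preserve cycle lengths, so every cycle in $\cB'$ also has length at most $D+1$.

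Now fix an edge $e = \{u,v\}$ and suppose $e' \neq e$ lies in some common cycle $C' \in \cB'$ with $e$. Since $|C'| \le D+1$, the edge $e'$ is joined to $e$ by a path in $C'$ of length at most $D$, so each endpoint of $e'$ lies in $B_D(u) \cup B_D(v)$. The bounded-degree hypothesis gives $|B_D(u) \cup B_D(v)| \le 2\Delta^{D+1}$, and hence the number of edges incident to this vertex set is at most $\Delta^{D+2}$. Therefore $\cB'$ is $D'$-bounded with $D' = D'(D,\Delta)$ an explicit function of $D$ and $\Delta$.

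I expect no serious obstacle here: the construction is forced, and the only thing to verify carefully is the boundedness, which reduces via the length bound $|C| \le D+1$ to a local counting argument controlled by $\Delta$. The one point that deserves a line of justification is that automorphisms genuinely send cycles to cycles of the same length, which is immediate from the definition of a graph automorphism.
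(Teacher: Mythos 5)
Your proof is correct and hinges on the same key observation as the paper's: $D$-boundedness forces $|C|\le D+1$ for every $C\in\cB$, after which bounded degree controls the boundedness of the enlarged basis. The paper's construction is marginally cleaner — it takes $\cB'$ to be \emph{all} cycles of length at most $D+1$, which is automorphism invariant by fiat and contains your orbit $\bigcup_\pi \pi\cB$ as a subset — but the two arguments are essentially the same.
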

\begin{proof}
  Let $L$ be the length of the longest cycles in $\cB$. If $\cB$ is
  $D$-bounded, then $L\leq D+1$. Let $\cB'$ be the set of all cycles
  of length at most $L$, and note $\cB'$ is automorphism invariant.
  Moreover, $\cB'$ is a bounded cycle basis: since $G$ has maximum
  degree $\Delta$, every edge is in a common basis cycle with at most
  $D' \leq (\Delta-1)^{L-1} \leq (\Delta-1)^D$ other edges.
\end{proof}

\subsection{Boundary Conditions for the Hard-Core Model}
\label{sec:md}

Let $\lam\colon V \to [0,\infty)$ be a vector of
\emph{activities}. This paper primarily considers \emph{bipartite
  activities}, i.e., $\lam(v)=\lam_{\even}$ if $v\in V_{\even}$ and
$\lam(v)=\lam_{\odd}$ if $v\in V_{\odd}$. In the homogeneous setting
$\lam_{\even}=\lam_{\odd}$ we simply write $\lam$.

\subsubsection{Finite Graphs and Boundary Conditions}

The \emph{hard-core model} on a finite graph $G=(V,E)$ is the
distribution $\mu_{G,\lam}$ on $\{0,1\}^{V}$ given by
\begin{equation*}
\mu_{G,\lambda}(\omega) = \mu(\omega) =
\frac{\lambda^{\omega}}{Z_{G,\lambda}} 1_{\omega\in \cI(G)}, \qquad
Z_{G, \lambda} = \sum_{\omega\in \{0,1\}^{V}} \lambda^{\omega}1_{\omega\in \cI(G)},
\end{equation*}
where $\cI(G)$ is the set of independent sets on $G$,
$\lam^{\omega} = \prod_{i\in V}\lam_{i}^{\omega_{i}}$, and
$Z_{G,\lam}$ is the \emph{partition function}. This definition agrees
with~\eqref{eq:intro} by identifying $\omega\in\{0,1\}^{V}$ with the
set of vertices for which $\omega(v)=1$; these vertices are
\emph{occupied} while the others are \emph{unoccupied}.

\emph{Boundary conditions} arise by imposing that certain
  vertices are occupied. Given $\bar{\omega}\in \{0,1\}^{U}$, $U\subset V$,
  the hard-core model with boundary conditions $\bar{\omega}$ is the
  distribution on $\{0,1\}^{V}$ given by
\begin{equation}
  \label{eq:hcbc}
  \mu^{\bar{\omega}}_{G,\lambda}(\omega) = \mu^{\bar{\omega}}(\omega) =
  \frac{\lambda^{\omega}}{Z^{\bar{\omega}}_{G,\lambda}} 1_{\omega\in
    \cI(G)}\prod_{v\in U}1_{\omega(v)=\bar{\omega}(v)},
  \qquad
  Z_{G, \lambda}^{\bar{\omega}} = \mathop{\sum_{\omega \in \{0,1\}^{V}}}_{\forall v\in U, \omega(v)=\bar{\omega}(v)}
  \lambda^{\omega}1_{\omega  \in \cI(G)}.
\end{equation}
Thus every vertex occupied in $\bar{\omega}$ remains occupied and every
vertex unoccupied in $\bar{\omega}$ remains unoccupied; the randomness of
$\mu^{\bar{\omega}}$ only concerns vertices in $V\setminus U$. 

When $G$ is bipartite, there is an important partial order $\lessdot$
on $\{0,1\}$ given by setting $\omega\lessdot\bar{\omega}$ if
$\omega(v)\leq \bar{\omega}(v)$ for $v\in V_{\even}$ and
$\omega(v)\geq \bar{\omega}(v)$ for $v\in V_{\odd}$. There are a unique
minimal and maximal elements under $\lessdot$, namely $1_{V_{\odd}}$
and $1_{V_{\even}}$. The following lemma is a well-known consequence of
the FKG inequality, see, e.g., \cite[Lemma~3.1]{van1994percolation}.
\begin{lemma}
  \label{lem:FKG-order}
  If $\omega\lessdot \bar{\omega}$, then $\mu^{\omega}$ is stochastically
  dominated by $\mu^{\bar{\omega}}$.
\end{lemma}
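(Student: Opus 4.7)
The plan is to reduce the statement to a direct application of Holley's inequality on a partially ordered product space. The only subtlety is that the natural monotonicity for the hard-core model on a bipartite graph is with respect to the flipped order $\lessdot$ rather than the pointwise order on $\{0,1\}^{V}$. I would deal with this by reparametrizing.

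First, define $\sigma:V\to\{0,1\}$ by $\sigma(v)=\omega(v)$ if $v\in V_{\even}$ and $\sigma(v)=1-\omega(v)$ if $v\in V_{\odd}$. Then $\omega\lessdot\omega'$ holds if and only if $\sigma\leq\sigma'$ pointwise, so a function of $\omega$ is increasing with respect to $\lessdot$ exactly when the corresponding function of $\sigma$ is increasing in the usual pointwise order on $\{0,1\}^{V}$. Thus stochastic domination with respect to $\lessdot$ is equivalent to stochastic domination of the pushforward measures on $\sigma$-configurations in the usual sense.

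Next, I would rewrite the hard-core measure in the $\sigma$-coordinates. For each edge $\{u,v\}\in E$ with $u\in V_{\even}$ and $v\in V_{\odd}$, the independent-set constraint $\omega(u)\omega(v)=0$ becomes $\sigma(u)(1-\sigma(v))=0$, i.e., $\sigma(u)\leq \sigma(v)$. Hence the measure $\mu^{\bar{\omega}}_{G,\lam}$ has density (in the $\sigma$-variables) proportional to
\begin{equation*}
\prod_{u\in V_{\even}}\lam_{\even}^{\sigma(u)}\prod_{v\in V_{\odd}}\lam_{\odd}^{1-\sigma(v)}\prod_{\{u,v\}\in E,\, u\in V_{\even}}1_{\sigma(u)\leq\sigma(v)},
\end{equation*}
with the boundary condition imposing the values of $\sigma$ on $U$. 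The single-site factors are log-modular, and the key observation is that the family of edge indicators $1_{\sigma(u)\leq\sigma(v)}$ is stable under $\vee$ and $\wedge$: if $\sigma(u)\leq\sigma(v)$ and $\sigma'(u)\leq\sigma'(v)$, then $\max(\sigma(u),\sigma'(u))\leq\max(\sigma(v),\sigma'(v))$ and similarly for the minimum. Consequently the density is log-supermodular on $\{0,1\}^{V}$, i.e., the FKG lattice condition holds.

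Finally, translate $\omega\lessdot\bar{\omega}$ into $\sigma_{U}\leq\bar\sigma_{U}$ on $U$, so the two boundary-conditioned measures in the $\sigma$-coordinates are obtained from the same FKG measure by conditioning on the events $\{\sigma\restr{U}=\sigma_{U}\}$ and $\{\sigma\restr{U}=\bar\sigma_{U}\}$ respectively, with $\sigma_{U}\leq\bar\sigma_{U}$. Holley's inequality (in its standard form for log-supermodular measures, conditioned on ordered cylinder events) then gives that the latter conditional measure stochastically dominates the former in the pointwise order. Undoing the reparametrization yields stochastic domination of $\mu^{\omega}$ by $\mu^{\bar{\omega}}$ in the order $\lessdot$, as desired. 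The only step requiring care is checking the FKG lattice condition, which is entirely mechanical once the reparametrization is in place; no step appears to present a genuine obstacle.
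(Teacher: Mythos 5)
The paper does not actually prove Lemma~\ref{lem:FKG-order}; it only cites \cite[Lemma~3.1]{van1994percolation}. Your argument supplies exactly the standard proof that the citation defers to: the parity-flip reparametrization $\sigma$, the observation that the independence constraint becomes the monotone condition $\sigma(u)\le\sigma(v)$ on each edge, the verification of the FKG lattice condition (log-modular single-site factors times edge factors whose supports form a sublattice of $\{0,1\}^V$), and the application of Holley's inequality to a pair of measures obtained by conditioning on ordered cylinder events on $U$. This is correct. Two minor remarks worth keeping in mind. First, since the hard constraints make the density vanish on part of $\{0,1\}^V$, you are implicitly using the version of Holley's inequality that allows zero weights; this is standard, and the sublattice observation you made (that the constraint set is closed under $\vee$ and $\wedge$) is precisely what makes it unproblematic. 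Second, the lemma is stated in Section~\ref{sec:md} for a general activity vector $\lam\colon V\to[0,\infty)$, not just bipartite activities, so the single-site factors should be written $\lam(v)^{\sigma(v)}$ or $\lam(v)^{1-\sigma(v)}$ as appropriate; this changes nothing since they remain log-modular. No genuine gap.
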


\subsubsection{Infinite Graphs}
\label{sec:md-infinite}
On infinite graphs the hard-core model is defined by the
Dobrushin--Lanford--Ruelle (DLR) approach, for textbook treatments
see~\cite[Chapter~6]{friedli2017statistical},
\cite{georgii2011gibbs}. In words, a measure $\mu$ on $\cI(G)$
is an \emph{infinite-volume Gibbs measure at activity $\lambda$} if
conditioned on the independent set outside of a finite set $\Lambda$
being $I$, the conditional distribution on independent sets inside
$\Lambda$ is the Gibbs measure on $\Lambda$ with the boundary
condition imposed by $I$. 

For a more precise description in the context of the hard-core model,
see~\cite{van1994percolation}. This reference also contains the
following facts that will be useful.  For infinite bipartite graphs
$G$, suppose $\Lambda_{n}\uparrow G$, i.e., $\Lambda_{n}$ is an
increasing sequence of subsets of $V$ with
$\cup_{n}\Lambda_{n}=V$. Let $\tilde \mu^{\even}_{\Lambda_{n}}$ denote
the hard-core measure on $\Lambda_{n}$ with all-even boundary
conditions, meaning all even vertices in
$U=\Lambda_{n}\cap V(\partial\Lambda_{n})$ occupied, and no odd
vertices occupied. Define $\tilde\mu^{\odd}_{\Lambda_{n}}$
analogously.  Then $\tilde\mu^{\even}_{\Lambda_{n}}$ and
$\tilde\mu^{\odd}_{\Lambda_{n}}$ converge to limiting infinite-volume
Gibbs measures $\tilde\mu^{\even}$ and $\tilde\mu^{\odd}$, and these
limits are independent of the sequence $\Lambda_{n}$. There are
multiple infinite-volume Gibbs measures if and only if
$\tilde\mu^{\even}\neq \tilde\mu^{\odd}$. Moreover,
$\tilde\mu^{\even}=\tilde\mu^{\odd}$ if and only if the single-vertex
marginals of these measures agree for all
$v\in V$~\cite[Theorem~4.18]{georgii2001random}.

\section{Pirogov--Sinai Theory: Combinatorial Steps}
\label{sec:hcm}

We now introduce the combinatorial definition of a \emph{contour} that
underpins our generalization of Pirogov--Sinai theory for the
hard-core model. This definition is adapted to an underlying infinite
graph $G$, and we impose the following condition throughout this
section.
\begin{assumption}
  \label{as:1}
  $G$ is infinite, connected, bipartite, and one-ended, and $\cB$ is
  a cycle basis of~$G$.
\end{assumption}

We will denote by $\Lambda$ a finite induced subgraph of $G$ with
$\partial \Lambda = \binf \Lambda$.  In
Section~\ref{sec:boundary-conditions} we introduce boundary conditions
for the hard-core model that are adapted to the cycle basis
$\cB$. Contours are defined in Section~\ref{sec:contours}, and
fundamental properties of contours are developed in
Section~\ref{sec:order}. Section~\ref{sec:bij} explains how to
represent the hard-core model in terms of contours. Subsequent
sections then contain preparation for the analytic aspects of
Pirogov--Sinai theory. Section~\ref{sec:peierls} defines weights and
then gives identities and estimates that lead to Peierls estimates
based on appropriate symmetry
assumptions. Section~\ref{sec:contour-polym-repr} then explains how
our definition of a contour allows for the hard-core model to be
reformulated as a polymer model, which is the starting point for the
analytic part of Pirogov--Sinai theory.

\subsection{Even and Odd Boundary Conditions}
\label{sec:boundary-conditions}

Recall that for $\Lambda$ a finite subgraph of $G$, $\partial \Lambda$
is the set of edges of $G$ with exactly one endpoint in $\Lambda$. We
say a basis cycle $C \in \cB$ \emph{exits $\Lambda$} if it contains an
edge of $\partial \Lambda$, that is, if both $V(C) \cap V(\Lambda)$
and $V(C) \cap (V(G) \setminus V(\Lambda))$ are nonempty.  We will
impose boundary conditions on (i) $\partial\Lambda$ and (ii) the set
of vertices in $\Lambda$ that are contained in a basis cycle that
exits $\Lambda$. Formally,
\begin{equation}
  \label{eq:cbbc}
  U=
  \set{v\in V(\Lambda) \mid \text{$v\in \partial\Lambda$ or there exists
    $C\in \cB$ such that $v\in V(C)$ and $C$ exits $\Lambda$}}.
\end{equation}

\emph{Odd (cycle basis) boundary conditions} on $\Lambda$ require all odd vertices
of $U$ to be occupied and all even vertices to be unoccupied;
\emph{even (cycle basis) boundary conditions} are analogous. We
  will simply say odd (and even) boundary conditions in what follows
  if no confusion will arise.
Denote the hard-core distribution and partition function with
respect to odd boundary conditions by
$\mu^{\odd}_{\Lambda, \lambda} = \mu^{\odd}_{\Lambda}$ and
$Z^{\odd}_{\Lambda, \lambda} = Z^{\odd}_\Lambda$, and
$\mu^{\even}_{\Lambda, \lambda} = \mu^{\even}_{\Lambda}$ and
$Z^{\even}_{\Lambda, \lambda} = Z^{\even}_\Lambda$ for even boundary
conditions. We write $\cI^{\even}(\Lambda)$ and
$\cI^{\odd}(\Lambda)$ for the sets of independent sets of
$\Lam$ compatible with the corresponding boundary conditions.

We can relate the boundary conditions just defined to the more
conventional boundary conditions and distributions $\tilde\mu^{\even}$
defined in Section~\ref{sec:md-infinite}. The same result evidently
holds for odd boundary conditions.
\begin{lemma}
  \label{lem:bcequiv}
  Suppose $\Lambda_{n}\uparrow G$. If $\cB$ is a bounded cycle basis,
  then the limiting measures
  $\lim_{n\to\infty}\mu^{\even}_{\Lambda_{n}}$ and
  $\lim_{n\to\infty}\tilde\mu^{\even}_{\Lambda_{n}}$ coincide.
\end{lemma}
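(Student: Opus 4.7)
My plan is to exhibit $\mu^{\even}_{\Lambda_n}$ as a further conditioning of $\tilde\mu^{\even}_{\Lambda_n}$, apply the FKG inequality (Lemma~\ref{lem:FKG-order}) to compare the two at finite volume, and then pass to the limit using the $\lessdot$-maximality of $\tilde\mu^{\even}$ among all infinite-volume Gibbs measures. The crucial structural observation is that because $\cB$ is bounded, Lemma~\ref{lem:BAI} yields a uniform bound $L$ on basis cycle lengths, so the extra vertex set $U_n^{0} := U_n \setminus (\Lambda_n \cap V(\partial\Lambda_n))$, on which $\mu^{\even}_{\Lambda_n}$ fixes boundary values but $\tilde\mu^{\even}_{\Lambda_n}$ does not, lies within graph distance $L$ of $\partial\Lambda_n$. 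In particular, $\Lambda_n \setminus U_n$ still exhausts $V(G)$ as $n\to\infty$, so any fixed cylinder event depends only on vertices disjoint from $U_n$ for all sufficiently large $n$.

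The first step is a finite-volume FKG comparison. For each $\eta\in\{0,1\}^{U_n^{0}}$, the conditional measure $\tilde\mu^{\even}_{\Lambda_n}(\,\cdot\,\mid\,\omega|_{U_n^{0}}=\eta)$ is the hard-core measure on $\Lambda_n$ with boundary condition on $U_n$ equal to (all-even on $\Lambda_n \cap V(\partial\Lambda_n)$, $\eta$ on $U_n^{0}$). This configuration is $\lessdot$-below the all-even configuration on $U_n$ defining $\mu^{\even}_{\Lambda_n}$, so Lemma~\ref{lem:FKG-order} gives that the conditional measure is stochastically $\lessdot$-dominated by $\mu^{\even}_{\Lambda_n}$. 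Averaging over $\eta$ weighted by $\tilde\mu^{\even}_{\Lambda_n}(\omega|_{U_n^{0}}=\eta)$ recovers $\tilde\mu^{\even}_{\Lambda_n}$ itself, and since stochastic domination is preserved under convex combinations, I conclude $\tilde\mu^{\even}_{\Lambda_n} \lessdot \mu^{\even}_{\Lambda_n}$ in stochastic order.

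Next I would take a weak subsequential limit $\nu$ of $\mu^{\even}_{\Lambda_n}$, which exists by compactness of the product topology on $\{0,1\}^{V(G)}$. A standard DLR consistency argument—valid precisely because $\Lambda_n \setminus U_n \uparrow V(G)$, so every fixed finite region lies eventually in $\Lambda_n \setminus U_n$—shows that $\nu$ is an infinite-volume Gibbs measure. Passing to the limit in the previous step gives $\tilde\mu^{\even} \lessdot \nu$. Combining this with the classical fact that $\tilde\mu^{\even}$ is $\lessdot$-maximal among infinite-volume Gibbs measures (a standard consequence of FKG; see Section~\ref{sec:md-infinite} and \cite{van1994percolation}), which forces $\nu \lessdot \tilde\mu^{\even}$, I conclude $\nu = \tilde\mu^{\even}$. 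Since every subsequential weak limit coincides with $\tilde\mu^{\even}$, the full sequence $\mu^{\even}_{\Lambda_n}$ converges to $\tilde\mu^{\even}$, as claimed.

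The main obstacle is setting up the FKG comparison cleanly when the two boundary-condition schemes are nominally defined on different vertex sets. This is resolved by the conditioning trick above, which expresses $\tilde\mu^{\even}_{\Lambda_n}$ as a convex combination of measures on $\{0,1\}^{\Lambda_n}$ whose boundaries are all fixed on the common set $U_n$, each of which is $\lessdot$-dominated by $\mu^{\even}_{\Lambda_n}$ via Lemma~\ref{lem:FKG-order}. A secondary point is verifying that the DLR consistency argument for $\nu$ is unaffected by the extra conditioning on $U_n^{0}$; this works precisely because the boundedness of $\cB$ keeps $U_n^{0}$ in a uniform neighborhood of $\partial\Lambda_n$, so the extra fixed values recede to infinity along with the ordinary boundary.
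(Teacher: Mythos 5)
Your proof is correct, but from the second paragraph onward it diverges from the paper's argument in an interesting way. Your first step — expressing $\tilde\mu^{\even}_{\Lambda_n}$ as a mixture over configurations on $U_n^0$ and invoking Lemma~\ref{lem:FKG-order} to get $\tilde\mu^{\even}_{\Lambda_n}\lessdot\mu^{\even}_{\Lambda_n}$ — is the same as the paper's first step (and spells out the conditioning trick more explicitly than the paper does). For the reverse direction the paper stays entirely at finite volume: it observes that because $\cB$ is bounded, for $m$ large enough the constrained set $U_{n+m}$ is disjoint from $\Lambda_n$, so the boundary condition defining $\mu^{\even}_{\Lambda_{n+m}}$ is a sub-configuration of the all-even configuration on $\Lambda_n^c\cup(\Lambda_n\cap V(\partial\Lambda_n))$ defining $\tilde\mu^{\even}_{\Lambda_n}$, giving $\mu^{\even}_{\Lambda_{n+m}}\lessdot\tilde\mu^{\even}_{\Lambda_n}$ by FKG; the conclusion then follows by sandwiching since $\tilde\mu^{\even}_{\Lambda_n}$ is known to converge. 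You instead take a subsequential weak limit $\nu$, argue it is a DLR Gibbs measure, and appeal to the $\lessdot$-maximality of $\tilde\mu^{\even}$ among all Gibbs measures to obtain the reverse inequality $\nu\lessdot\tilde\mu^{\even}$. This is valid — the maximality is indeed a standard consequence of FKG and is in van den Berg–Steif — but it pulls in more machinery (the abstract DLR framework and the maximality theorem) than the paper's self-contained two-sided finite-volume comparison. Both routes exploit the bounded cycle basis in the same essential place: you use it to ensure $\Lambda_n\setminus U_n\uparrow V(G)$ so that DLR consistency for $\nu$ can be checked on any fixed finite region, while the paper uses it to guarantee $U_{n+m}\cap V(\Lambda_n)=\emptyset$ for $m$ large. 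One small slip: you cite Lemma~\ref{lem:BAI} for the uniform bound $L$ on basis-cycle lengths, but that lemma is about producing an automorphism-invariant basis; the bound $L\le D+1$ is an elementary fact observed inside its proof and follows directly from~\eqref{eq:bcb}.
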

\begin{proof}
  The vertices occupied by the cycle basis boundary conditions on
  $\Lambda_{n}$ in the definition of $\mu^{\even}_{\Lambda_{n}}$ are a
  superset of those occupied by the standard boundary conditions in
  the definition of $\tilde \mu^{\even}_{\Lambda_{n}}$, so $\tilde
  \mu^{\even}_{\Lambda_{n}}$ is stochastically dominated by
  $\mu^{\even}_{\Lambda_{n}}$ by Lemma~\ref{lem:FKG-order}. On the
  other hand, for some finite $m$, the boundary condition defining
  $\mu^{\even}_{\Lambda_{n+m}}$ is a subset of that defining $\tilde
  \mu^{\even}_{\Lambda_{n}}$ since $G$ has a bounded cycle basis and
  $\Lambda_{n}\uparrow G$. This implies $\mu^{\even}_{\Lambda_{n+m}}$
  is stochastically dominated by $\tilde
  \mu^{\even}_{\Lambda_{n}}$. The claim follows since
  $\lim_{n\to\infty}\tilde \mu^{\even}_{\Lambda_{n}}$ exists as was
  recalled in Section~\ref{sec:md-infinite}. 
\end{proof}

\subsection{Contours and Compatibility}
\label{sec:contours}

Recall that for a collection of edges $E' \subseteq E(G)$,
$G \setminus E'$ denotes the graph $G'$ with vertex set $V(G)$ and
edge set $E(G) \setminus E'$. A \emph{contour} $\gamma$ of $G$ is a
finite nonempty basis-connected subset of $E(G)$ such that for each
connected component $A$ of $G \setminus \gamma$, the vertices of $A$
incident to edges of $\gamma$ have the same parity. See
Figure~\ref{fig:contour}. This definition implies that
$G \setminus \gamma$ has at least two connected components. Let
$\cC(G)$ be the set of all contours of $G$.

\begin{figure}
  \centering
  \begin{tikzpicture}
    \draw[black,dotted] (-.5,-.5) grid (2.5,2.5);

    \draw[eedge] (1,1) -- (1,2);
    \draw[eedge] (1,1) -- (1,0);
    \draw[eedge] (1,1) -- (0,1);
    \draw[eedge] (1,1) -- (2,1);
    
    \node[evE] at (0,0) {};
    \node[evO] at (1,0) {};
    \node[evE] at (2,0) {};
    \node[evO] at (0,1) {};
    \node[evE] at (1,1) {};
    \node[evO] at (2,1) {};
    \node[evE] at (0,2) {};
    \node[evO] at (1,2) {};
    \node[evE] at (2,2) {};
  \end{tikzpicture}
  \caption{The smallest contour (wavy lines) on $\Z^2$ when $\cB$
    consists of the length four cycles
    $(x,x+e_1,x+e_{1}+e_{2},x+e_{2},x)$ for $x\in \Z^{2}$,
    $e_{1},e_{2}$ the standard unit basis vectors in $\R^{2}$.}
  \label{fig:contour}
\end{figure}
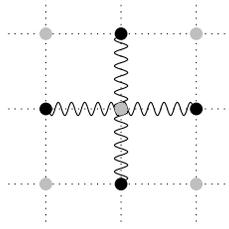

\begin{lemma}
  \label{lem:diff_comps}
  The edges of a contour $\gamma$ have their endpoints in different
  components of $G \setminus \gamma$.
\end{lemma}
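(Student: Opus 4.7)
The plan is to argue by contradiction, exploiting bipartiteness together with the parity constraint built into the contour definition.

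Fix an edge $e=\{u,v\}\in\gamma$ and suppose, for contradiction, that $u$ and $v$ lie in the same connected component $A$ of $G\setminus\gamma$. Because $e\in\gamma$, each of $u$ and $v$ is incident to an edge of $\gamma$ (namely $e$ itself). Hence both $u$ and $v$ belong to the set of vertices of $A$ that are incident to edges of $\gamma$. By the defining property of a contour, this set must be parity-homogeneous, so $u$ and $v$ have the same parity. On the other hand, $G$ is bipartite (Assumption~\ref{as:1}) and $\{u,v\}\in E(G)$, so $u$ and $v$ lie in different parts of the bipartition, a contradiction.

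There is essentially no obstacle here: the statement is a direct consequence of unpacking the definition of a contour against bipartiteness. The only thing worth double-checking is the trivial observation that $u$ and $v$ remain in the \emph{same} connected component structure of $G\setminus\gamma$ in the sense that they do lie in some component at all, which is automatic since $V(G\setminus\gamma)=V(G)$ (only edges, not vertices, are removed). No use is made of the basis-connectedness of $\gamma$ nor of one-endedness; only bipartiteness and the parity condition in the contour definition are needed.
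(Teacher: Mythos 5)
Your argument is correct and matches the paper's, which simply observes in one line that the endpoints of any edge have opposite parities; your write-up makes explicit the contradiction with the parity-homogeneity clause of the contour definition that the paper leaves implicit.
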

\begin{proof}
  For any edge $e$ one endpoint is odd and one endpoint is even.  
\end{proof}

Given an independent set $I$ we say an edge $e$ is \emph{unoccupied
  (by $I$)} if $e\cap I = \emptyset$. The next proposition will not be
needed for our subsequent developments, but it provides valuable
intuition for the meaning of contours, and similar constructions will
be used in the sequel.
\begin{prop}
  \label{prop:single_contour_config}
  For each $\gamma\in \cC(G)$ there is an independent set
  $I\in \cI(G)$ whose unoccupied edges are exactly the edges
  of $\gamma$.
\end{prop}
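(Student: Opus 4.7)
The plan is to construct $I$ explicitly and component-by-component on $G \setminus \gamma$, letting the contour's parity condition decide which side of each local bipartition to take.

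First I would enumerate the connected components $A_{1}, A_{2}, \dots$ of $G \setminus \gamma$. Since $G$ is connected and $\gamma$ is nonempty, each $A_{i}$ must contain at least one vertex incident to $\gamma$: otherwise $A_{i}$ would already be a connected component of $G$, contradicting connectedness. By the contour hypothesis, all $\gamma$-incident vertices of $A_{i}$ share a common parity $\ell_{i} \in \{\even,\odd\}$, and I would then set
\[
I \bydef \bigcup_{i} \bigl( V(A_{i}) \cap V_{\overline{\ell_{i}}} \bigr),
\]
where $\overline{\even} = \odd$ and $\overline{\odd} = \even$; within each component, $I$ is precisely the side of the bipartition \emph{opposite} to the parity pinned down by the contour.

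Next I would verify the three required properties. For \emph{independence}: any edge of $G$ is either contained in a single $A_{i}$ or lies in $\gamma$. In the former case $I \cap V(A_{i})$ lies entirely on one side of the bipartition of $A_{i}$, so the edge cannot have both endpoints in $I$. In the latter case, consider $e = \{u,v\} \in \gamma$ with $u$ even and $v$ odd; by Lemma~\ref{lem:diff_comps} the endpoints lie in distinct components $A_{i}$ and $A_{j}$, and since $u$ is a $\gamma$-incident even vertex of $A_{i}$ we must have $\ell_{i} = \even$, forcing $u \notin I$; symmetrically $v \notin I$. This simultaneously shows that every contour edge is \emph{unoccupied}. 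Finally, to see that every non-contour edge is \emph{occupied}, note that any $e \notin \gamma$ lies wholly within some $A_{i}$, has one even and one odd endpoint, and exactly the endpoint of parity $\overline{\ell_{i}}$ belongs to $I$.

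I expect no serious obstacle: the construction is a direct translation of the contour definition into a choice of independent set. The only points requiring care are verifying that each component contains a $\gamma$-incident vertex so that $\ell_{i}$ is unambiguously defined, and invoking Lemma~\ref{lem:diff_comps} to ensure that the two endpoints of any contour edge lie in components with opposite parity labels, so neither endpoint is in $I$. The basis-connectedness clause of the definition of a contour plays no role here; it enters only in later structural results.
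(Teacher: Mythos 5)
Your construction coincides with the paper's: in each component of $G\setminus\gamma$ you occupy precisely the vertices whose parity is opposite to that of the $\gamma$-incident vertices, which is exactly the sets $\mathcal H_{\even}$/$\mathcal H_{\odd}$ of the paper's proof described in terms of a per-component label $\ell_i$. The verification is the same; you add a small (correct) extra check that every component meets $\gamma$, and invoke Lemma~\ref{lem:diff_comps} where the paper leaves it implicit.
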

\begin{proof}
  For each $\gamma$ we will construct an independent set $I$ whose
  unoccupied edges are exactly those of $\gamma$. For each connected
  component $H$ of $G\setminus\gamma$, all vertices in $H$ that are
  endpoints of edges in $\gamma$ have the same parity. Let
  $\mathcal{H}_{\odd}$ denote the union of the components for which
  these vertices are even, and $\mathcal{H}_{\even}$ the union of
  components for which they are odd. Let $I$ be the independent set
  consisting of all odd vertices in $\mathcal{H}_{\odd}$ and all even
  vertices in $\mathcal{H}_{\even}$. Now observe that every edge $e$
  of $\gamma$ is unoccupied by $I$.  Moreover, there are no unoccupied
  edges that are not in $\gamma$, as every edge interior to a
  component of $\mathcal{H}_{\even}$ or $\mathcal{H}_{\odd}$ contains
  a vertex of each parity.
\end{proof}

Two contours $\gamma$ and $\gamma'$ are \emph{compatible} if their
union is not basis connected.  Otherwise the contours are declared
\emph{incompatible}.
\begin{lemma}
  \label{lem:comp-disjoint}
  If $\gamma$ and $\gamma'$ are compatible, then they are disjoint.
\end{lemma}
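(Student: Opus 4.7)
I would prove the contrapositive: if $\gamma$ and $\gamma'$ share an edge, then their union is basis connected, hence they are incompatible.

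So suppose $e \in \gamma \cap \gamma'$, and let $\gamma \cup \gamma' = E_1 \sqcup E_2$ be any nontrivial bipartition. The plan is to produce a basis cycle $C \in \cB$ meeting both $E_1$ and $E_2$. Without loss of generality $e \in E_1$. Since the bipartition is nontrivial, $E_2$ contains some edge $f$, and by definition of $\gamma \cup \gamma'$ we have $f \in \gamma$ or $f \in \gamma'$.

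In the first case, the sets $\gamma \cap E_1$ and $\gamma \cap E_2$ are both nonempty (they contain $e$ and $f$ respectively), so $\gamma = (\gamma \cap E_1) \sqcup (\gamma \cap E_2)$ is a nontrivial bipartition of $\gamma$. Since $\gamma$ is basis connected, there is a cycle $C \in \cB$ with $C \cap (\gamma \cap E_1) \neq \emptyset$ and $C \cap (\gamma \cap E_2) \neq \emptyset$, and a fortiori $C$ meets both $E_1$ and $E_2$. In the second case, the symmetric argument applied to $\gamma'$ (again using $e \in \gamma' \cap E_1$ and $f \in \gamma' \cap E_2$) produces the required basis cycle from basis connectivity of $\gamma'$.

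Since every nontrivial bipartition of $\gamma \cup \gamma'$ is witnessed this way, $\gamma \cup \gamma'$ is basis connected, contradicting compatibility. There is no real obstacle here; the only thing to be careful about is that the shared edge $e$ forces both $\gamma \cap E_1$ and $\gamma' \cap E_1$ to be nonempty simultaneously, which is exactly what lets the case analysis on where $f$ lives go through without further bookkeeping.
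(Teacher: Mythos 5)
Your proof is correct and is essentially an expanded version of the paper's one-line argument: both hinge on the observation that a common edge $e$ forces any nontrivial bipartition of $\gamma\cup\gamma'$ to induce a nontrivial bipartition of $\gamma$ or of $\gamma'$, so basis connectivity of that contour yields a basis cycle meeting both parts. The case analysis on where $f$ lies is exactly the detail the paper leaves implicit.
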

\begin{proof}
  The existence of a common edge in $\gamma$ and $\gamma'$ implies
  that any non-trivial bipartition of $\gamma\cup \gamma'$ induces a
  non-trivial bipartition of at least one of $\gamma$ and $\gamma'$.
\end{proof}
A set $\Gamma$ of contours is \emph{compatible} if all contours in
$\Gamma$ are pairwise compatible. The following lemma is almost
immediate.

\begin{lemma}
  \label{lem:comp-maximal}
  Suppose $\Gamma$ is a collection of compatible contours. Then
  $\Gamma$ is the set of maximal basis connected
  subsets of $\bigcup_{\gamma\in\Gamma}\gamma\subset E(G)$.
\end{lemma}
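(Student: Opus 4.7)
The statement to prove has two directions: every $\gamma \in \Gamma$ is a maximal basis-connected subset of $E \bydef \bigcup_{\gamma' \in \Gamma} \gamma'$, and every maximal basis-connected subset of $E$ belongs to $\Gamma$.

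For the first (and more substantial) direction, my plan is to argue by contradiction. Suppose some $\gamma \in \Gamma$ is not maximal, so there exists a basis-connected set $E'$ with $\gamma \subsetneq E' \subseteq E$. Consider the non-trivial bipartition $E' = \gamma \sqcup (E' \setminus \gamma)$. Basis-connectedness of $E'$ produces a cycle $C \in \cB$ with $C \cap \gamma \neq \emptyset$ and with some edge $e \in C \cap (E' \setminus \gamma)$. By Lemma~\ref{lem:comp-disjoint}, the contours in $\Gamma$ are pairwise disjoint, so there is a unique $\gamma' \in \Gamma \setminus \{\gamma\}$ with $e \in \gamma'$. I will then show $\gamma \cup \gamma'$ is basis-connected, contradicting compatibility. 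For any non-trivial bipartition $\gamma \cup \gamma' = A \sqcup B$, one of three cases applies: (i) $\gamma \cap A$ and $\gamma \cap B$ are both non-empty, in which case basis-connectedness of $\gamma$ supplies a witness cycle; (ii) symmetrically for $\gamma'$; or (iii) $\gamma$ and $\gamma'$ are each contained in one side of the bipartition, in which case $C$ itself witnesses basis-connectedness since $\emptyset \neq C \cap \gamma$ and $\emptyset \neq C \cap \gamma'$ (with $e \in C \cap \gamma'$) lie on opposite sides.

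For the second direction, I first need the auxiliary fact that if $A$ and $B$ are basis-connected subsets of $E(G)$ sharing at least one edge, then $A \cup B$ is basis-connected. The proof is the same case analysis as above: any non-trivial bipartition of $A \cup B$ either restricts non-trivially to $A$ or to $B$ (giving a witness cycle), or places $A$ and $B$ on opposite sides, which contradicts $A \cap B \neq \emptyset$. Consequently, the maximal basis-connected subsets of $E$ are pairwise disjoint, and since singletons are vacuously basis-connected they partition $E$. Now let $M$ be any maximal basis-connected subset of $E$; pick $e \in M$ and let $\gamma \in \Gamma$ be the (unique, by Lemma~\ref{lem:comp-disjoint}) contour containing $e$. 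By the union-closure property, the maximal basis-connected subset containing $e$ must contain $\gamma$, so $\gamma \subseteq M$. By the first direction $\gamma$ is itself maximal, hence $M = \gamma \in \Gamma$.

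The only genuinely non-routine step is the case analysis showing that $\gamma \cup \gamma'$ is basis-connected, and the main subtlety there is case (iii): one must exhibit a single cycle as a witness, and this is precisely where the existence of the cycle $C$ coming from basis-connectedness of $E'$ is used. Everything else amounts to unwinding definitions and invoking Lemma~\ref{lem:comp-disjoint}.
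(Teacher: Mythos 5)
Your proof is correct and rests on the same two observations as the paper's: compatibility of $\Gamma$ rules out a maximal basis-connected subset of $\bigcup_{\gamma}\gamma$ strictly containing a contour, while basis-connectedness of contours rules out such a subset being a non-empty proper subset of one. The paper's proof is a terse two sentences and leaves implicit both the union-closure fact (basis-connected sets sharing an edge have basis-connected union, needed to see that a maximal basis-connected subset cannot straddle a contour boundary) and the three-case argument showing $\gamma \cup \gamma'$ basis-connected, both of which you spell out correctly.
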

\begin{proof}
  Let $E(\Gamma)=\bigcup_{\gamma\in\Gamma}\gamma$. By compatibility, a
  maximal basis connected subset of $E(\Gamma)$ cannot be a superset
  of a contour $\gamma$. A maximal basis connected subset of
  $E(\Gamma)$ cannot be non-empty proper subset of a contour $\gamma$
  as this would contradict $\gamma$ being a contour.
\end{proof}

In Section~\ref{sec:bij} we will show that given an independent set
$I$, we obtain a set of compatible contours by considering the set of
maximal basis-connected components of the unoccupied edges of $I$. Not
all sets of compatible contours can arise in this way,
however. Section~\ref{sec:order} develops some preliminaries that will
play a role in our description of the sets of contours that can arise
in Section~\ref{sec:bij}.

\subsection{Ordering contours}
\label{sec:order}

By Assumption~\ref{as:1}, there is a unique infinite component of
$G \setminus \gamma$, and we call it the \emph{exterior component} of
$G \setminus \gamma$.  All other components of $G \setminus \gamma$
are called \emph{interior components}. More generally these
  notions makes sense when removing any finite set of edges, e.g., a
  collection of contours.  We write $\inte\gamma$ for the set of
vertices in interior components.  This section verifies that these
notions of interior and exterior behave as one would intuitively
expect.

If $\gamma$ and $\gamma'$ are compatible contours, say
$\gamma' \prec\gamma$ if the endpoints of edges in $\gamma'$ are all
contained in interior components of $G\setminus\gamma$.  We read
$\gamma'\prec \gamma$ as `$\gamma'$ is contained in $\gamma$'. We will
write that $\gamma$ is \emph{exterior to} $\gamma'$ or $\gamma'$ is
\emph{interior to} $\gamma$, depending on what is grammatically
convenient.

The following lemma is helpful for deducing properties of the relation
$\preceq$.
\begin{lemma}
  \label{lem:order}
  Suppose $\gamma$ and $\gamma'$ are compatible contours. If $\gamma'$
  is incident to a vertex in an interior component $H$ of
  $G\setminus\gamma$, then $\gamma'$ is contained in $H$.  In
  particular, $\gamma'\prec\gamma$.
\end{lemma}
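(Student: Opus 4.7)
The plan is to argue by contradiction using basis-connectedness of $\gamma'$ together with compatibility of $\gamma$ and $\gamma'$. First, by Lemma~\ref{lem:comp-disjoint}, $\gamma$ and $\gamma'$ are edge-disjoint. Since $H$ is a connected component of $G \setminus \gamma$, every edge of $G$ with exactly one endpoint in $V(H)$ must belong to $\gamma$. Consequently, each edge of $\gamma'$ has both endpoints in $V(H)$ or both endpoints in $V(G) \setminus V(H)$, and I can partition $\gamma' = E_1 \sqcup E_2$ accordingly, with $E_1$ the edges of $\gamma'$ having both endpoints in $V(H)$. The hypothesis gives $E_1 \ne \emptyset$; if also $E_2 = \emptyset$, then every edge of $\gamma'$ lies in $H$, so $\gamma'$ is contained in $H$ and $\gamma' \prec \gamma$ by definition.

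Suppose instead, toward a contradiction, that $E_2 \ne \emptyset$. Then $E_1 \sqcup E_2$ is a non-trivial bipartition of the basis-connected set $\gamma'$, so there is a basis cycle $C \in \cB$ with $C \cap E_1 \ne \emptyset$ and $C \cap E_2 \ne \emptyset$. Such a $C$ has an edge inside $V(H)$ and an edge outside, hence must contain at least one edge of the edge boundary $\partial V(H)$, and every such boundary edge belongs to $\gamma$. Thus $C$ meets both $\gamma$ and $\gamma'$, giving a single basis cycle that bridges the two contours.

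I would finish by showing this forces $\gamma \cup \gamma'$ to be basis-connected, contradicting compatibility. Given any non-trivial bipartition $\gamma \cup \gamma' = A \sqcup B$, one of three cases must occur: (i) both $A$ and $B$ meet $\gamma$, and basis-connectedness of the contour $\gamma$ supplies a witnessing cycle; (ii) both $A$ and $B$ meet $\gamma'$, handled by basis-connectedness of $\gamma'$; or (iii) the edges of $\gamma$ and $\gamma'$ are fully separated by the partition, in which case $\{A,B\} = \{\gamma,\gamma'\}$ and the cycle $C$ constructed above is a witness. (One checks that any bipartition not covered by (i) or (ii) must fall into (iii), since otherwise either $A$ or $B$ would be empty.) The main thing to track carefully is this case analysis; once it is in place the contradiction with compatibility is immediate and the lemma follows.
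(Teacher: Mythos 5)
Your proof is correct, and the underlying strategy is the same as the paper's: argue by contradiction, produce a basis cycle containing an edge of $\gamma'$ on each side of $H$ and hence also an edge of $\gamma$, and conclude that $\gamma\cup\gamma'$ is basis-connected, contradicting compatibility. If anything your write-up is slightly more careful than the paper's: you obtain the bridging cycle by applying the definition of basis-connectedness of $\gamma'$ to the explicit bipartition of $\gamma'$ into edges inside $V(H)$ and edges outside, whereas the paper asserts that two specific edges $e_u,e_v$ of $\gamma'$ are joined by a single basis cycle, which is not what basis-connectedness literally guarantees; and you spell out the three-case argument showing that a single bridging cycle, together with basis-connectedness of each of $\gamma$ and $\gamma'$, forces $\gamma\cup\gamma'$ to be basis-connected, a step the paper compresses into a one-line assertion. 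Both of these details are correct as you have written them (the case analysis correctly uses disjointness of the contours, from Lemma~\ref{lem:comp-disjoint}, and their nonemptiness).
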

\begin{proof}
  Towards a contradiction, suppose not. Then there are two vertices
  $u$ and $v$ in edges of $\gamma'$ in distinct components of
  $G \setminus \gamma$. Note that $\{u,v\}$ can't be an edge of
  $\gamma'$, as compatibility would imply $\{u,v\}\notin \gamma$,
  which would imply $u$ and $v$ are in the same component of
  $G\setminus\gamma$.  Compatibility, via
  Lemma~\ref{lem:comp-disjoint}, further implies that there are
  distinct edges $e_{u}$ and $e_{v}$ of $\gamma'$ that contain $u$ and
  $v$, respectively. Since $e_{u}$ and $e_{v}$ are basis connected,
  there is a basis cycle $C$ connecting them. Our assumption that
  $u,v$ are in distinct components of $G\setminus\gamma$ implies $C$
  contains an edge of $\gamma$. This is a contradiction, as it implies
  $\gamma$ and $\gamma'$ are not compatible.
\end{proof}
  
Let $|\inte \gamma|$ denote the number of vertices contained in the
interior components of $\gamma$.
\begin{lemma}
  \label{lem:spo_order}
  The relation $\prec$ has the following properties:
  \begin{enumerate}[nosep]
  \item It is a strict partial order on contours.
  \item If $\gamma'\prec\gamma$, then
    $\inte \gamma'\subsetneq \inte\gamma $. In particular,
      $|\inte \gamma'| < |\inte \gamma|$.
    \item If $\gamma\prec\gamma_1$,
        $\gamma\prec\gamma_2$, and
        $\{\gamma,\gamma_{1},\gamma_{2}\}$ are a
        compatible set of contours, then either
        $\gamma_{1}\prec\gamma_{2}$ or $\gamma_{2}\prec\gamma_{1}$.
  \end{enumerate}
\end{lemma}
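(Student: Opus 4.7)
The plan is to prove the three assertions in the order (2), (1), (3), with Lemma~\ref{lem:order} serving as the central tool: whenever $\gamma'\prec\gamma$, it confines $\gamma'$ to the edge set $E(H)$ of a single interior component $H$ of $G\setminus\gamma$.

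For (2), first note the inclusion $\inte\gamma'\subseteq\inte\gamma$. Since $\gamma'\subseteq E(H)$ and $\gamma\cap\gamma'=\emptyset$ by Lemma~\ref{lem:comp-disjoint}, the exterior component of $G\setminus\gamma$ is entirely preserved in $G\setminus\gamma'$, remaining infinite and connected there. Consequently every vertex of $V(G)\setminus\inte\gamma$ belongs to the exterior component of $G\setminus\gamma'$, so $\inte\gamma'\subseteq\inte\gamma$. For strict containment, $V(H)$ is finite while $G$ is infinite and connected, so $\partial V(H)\subseteq\gamma$ is non-empty; pick $w\in V(H)$ incident to some edge in $\partial V(H)$. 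Consider the quotient graph $Q$ whose nodes are the components of $G\setminus\gamma$ and whose edges are the edges of $\gamma$; $Q$ is connected because $G$ is, so $Q$ has a path from the node for $H$ to the node for the exterior component. Lifting this $Q$-path to $G\setminus\gamma'$ uses only edges of $\gamma$ (present in $G\setminus\gamma'$) and edges internal to components of $G\setminus\gamma$ other than $H$ (undisturbed by $\gamma'\subseteq E(H)$), yielding a walk from $w$ to the exterior component of $G\setminus\gamma$. Hence $w$ is in the exterior component of $G\setminus\gamma'$ as well, so $w\in\inte\gamma\setminus\inte\gamma'$.

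For (1), irreflexivity is immediate from (2). For transitivity, given $\gamma_1\prec\gamma_2\prec\gamma_3$ let $H_2,H_3$ be the interior components containing $\gamma_1,\gamma_2$ supplied by Lemma~\ref{lem:order}. I first aim to show $V(H_2)\subseteq V(H_3)$: endpoints of edges in $\partial V(H_2)\subseteq\gamma_2$ lie in $V(H_3)$ (by $\gamma_2\prec\gamma_3$), and no edge of $\gamma_3$ lies in $E(H_2)$, for otherwise $\gamma_3$ would have an endpoint in $\inte\gamma_2$, giving $\gamma_3\prec\gamma_2$ by Lemma~\ref{lem:order} and contradicting (2) applied to $\gamma_2\prec\gamma_3$. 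Thus $H_2$ is connected in $G\setminus\gamma_3$ and meets $V(H_3)$, so $V(H_2)\subseteq V(H_3)$ and the endpoints of $\gamma_1$ lie in $\inte\gamma_3$. For compatibility of $\gamma_1,\gamma_3$ I would first record the equivalence, arising from basis-connectedness of individual contours and Lemma~\ref{lem:comp-disjoint}: two contours are compatible iff edge-disjoint and no basis cycle contains an edge from each. If a basis cycle $C$ contained $e_1\in\gamma_1$ (both endpoints in $V(H_2)$) and $e_3\in\gamma_3$ (both endpoints outside $V(H_2)$, since $\gamma_3$ has no endpoint in $\inte\gamma_2$), then $C$ would cross $\partial V(H_2)\subseteq\gamma_2$, producing a basis cycle with edges in both $\gamma_1$ and $\gamma_2$ and contradicting compatibility of $\gamma_1,\gamma_2$.

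For (3), assume $\gamma\prec\gamma_1$, $\gamma\prec\gamma_2$, and the triple is compatible, and suppose toward contradiction that neither $\gamma_1\prec\gamma_2$ nor $\gamma_2\prec\gamma_1$. By the contrapositive of Lemma~\ref{lem:order}, every endpoint of $\gamma_1$ lies in the exterior component of $G\setminus\gamma_2$, and every endpoint of $\gamma_2$ in the exterior component of $G\setminus\gamma_1$. Let $H_i$ be the interior component of $G\setminus\gamma_i$ containing $\gamma$; then the endpoints of $\gamma$ lie in $V(H_1)\cap V(H_2)$, while any boundary vertex of $V(H_1)$ is an endpoint of $\gamma_1$ and so lies outside $V(H_2)$. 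Since $H_1$ is connected, a path in $H_1$ from an endpoint of $\gamma$ to a boundary vertex of $V(H_1)$ must exit $V(H_2)$ through an edge in $\partial V(H_2)\subseteq\gamma_2$; this edge lies in $E(H_1)$, placing an endpoint of $\gamma_2$ inside $V(H_1)\subseteq\inte\gamma_1$ and forcing $\gamma_2\prec\gamma_1$ by Lemma~\ref{lem:order}, a contradiction. The main obstacle is the compatibility step in the transitivity proof, which requires the cycle-characterization of compatibility above; once it is in hand the remaining arguments are driven by the geometric observation that any crossing of $\partial V(H_i)$ must use an edge of $\gamma_i$.
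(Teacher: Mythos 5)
Your proof is correct and relies on the same central tool as the paper, Lemma~\ref{lem:order}, but you supply substantially more detail and in one place establish a slightly stronger statement.

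For item (2), your quotient-graph argument for strict inclusion is correct but more roundabout than needed (also, as written, the $Q$-path need not begin at the prechosen $w$; you should let $w$ be the $H$-endpoint of the first edge of the $Q$-path). The paper argues more directly: since $\gamma'$ is a contour, its edges have endpoints in the exterior component of $G\setminus\gamma'$, and all endpoints of $\gamma'$ lie in $V(H)\subseteq\inte\gamma$, so such an endpoint is already an element of $\inte\gamma\setminus\inte\gamma'$.

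For item (1), you go further than the paper: besides placing the endpoints of $\gamma_1$ in $\inte\gamma_3$, you also explicitly verify that $\gamma_1$ and $\gamma_3$ are compatible, via the characterization that compatibility (for contours) is equivalent to edge-disjointness plus the absence of a basis cycle meeting both. The paper's one-line appeal to Lemma~\ref{lem:order} implicitly uses $\prec$ only within an already pairwise-compatible family of contours (which is the only situation where the paper applies the lemma), so compatibility is there by hypothesis; your version proves the cleaner unconditional statement that $\gamma_1\prec\gamma_2\prec\gamma_3$ forces $\gamma_1\sim\gamma_3$, which is a nice strengthening. Your compatibility argument is correct: any basis cycle through $e_1\in\gamma_1\subseteq E(H_2)$ and $e_3\in\gamma_3$ (whose endpoints lie outside $V(H_2)$, since $\gamma_3$ avoids both $E(H_2)$ and $\partial V(H_2)\subseteq\gamma_2$) would cross $\partial V(H_2)\subseteq\gamma_2$ and contradict compatibility of $\gamma_1,\gamma_2$.

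For item (3), the paper asserts without elaboration that $\inte\gamma_1\cap\inte\gamma_2\ne\emptyset$ forces an endpoint of one of $\gamma_1,\gamma_2$ to lie in the interior of the other; you supply the missing path-crossing argument, which is exactly the right justification.
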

\begin{proof}
  \textbf{Claim 1.} No contour can contain itself, so $\prec$ is an irreflexive
  relation. Lemma~\ref{lem:order} implies $\prec$ is
  transitive. Irreflexivity and transitivity imply asymmetry, so
  $\prec$ is a strict partial order.

  \textbf{Claim 2.} Since $\gamma'\prec\gamma$ means $\gamma'$
  contains an edge with an endpoint in $\inte\gamma$,
  Lemma~\ref{lem:order} implies $\gamma'$ is contained in
  $\inte\gamma$. This implies $\inte\gamma'\subset\inte\gamma$, as if
  a vertex $v\in \inte\gamma'$ was not in $\inte\gamma$, then an edge
  of $\gamma'$ would not be included in $\inte\gamma$ (by following a
  path from $v$ to infinity). The inclusion is strict since the
  endpoints of edges of $\gamma'$ include vertices not in
  $\inte\gamma'$.

  \textbf{Claim 3.} Note that $\inte\gamma_{1}$ and
  $\inte\gamma_{2}$ have a vertex in common, as $\inte\gamma$ is
  contained in each of these sets. Hence either an
  endpoint of an edge of $\gamma_{2}$ is contained in
  $\inte\gamma_{1}$ or vice versa. The conclusion follows by
  Lemma~\ref{lem:order}.
\end{proof}

Given a collection of compatible contours $\Gamma$, $\gamma\in\Gamma$
is \emph{external} if is not contained in any other contour in
$\Gamma$.  The third item of Lemma~\ref{lem:spo_order} reveals a
product structure on sets of compatible external contours. Let
$\Gamma$ be a set of contours in which each contour is external. Then
  \begin{equation}
    \label{eq:product}
    \set{ \Gamma' \mid \text{the external contours of $\Gamma'$ are
      $\Gamma$}} = \prod_{\gamma\in\Gamma} \set{\tilde{\Gamma} \mid
    \text{$\gamma$ is the unique external contour of $\tilde{\Gamma}$}}.
  \end{equation}

  Given a collection of contours $\Gamma$, write $G\setminus\Gamma$
  for the graph $G$ with the edges contained in contours in
  $\Gamma$ removed. 
  \begin{lemma}
    \label{lem:ext-connect}
    Let $\Gamma$ be a finite collection of compatible contours. Suppose
    $\gamma\in\Gamma$ is external. Let $v\in\ext\gamma$ be incident to
    an edge of $\gamma$. Then $v$ is in the exterior component of
    $G\setminus\Gamma$.
  \end{lemma}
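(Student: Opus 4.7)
I would argue by contradiction: suppose $v$ lies in a finite connected component $F$ of $G\setminus\Gamma$, and then extract a single contour $\gamma^{*}\in\Gamma$ whose interior contains $v$, contradicting $v\in\ext\gamma$ together with the externality of $\gamma$.

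First I would fix the geometry of $F$. Pick an edge $e=\{v,u\}\in\gamma$ incident to $v$; by Lemma~\ref{lem:diff_comps}, $u$ lies in a component of $G\setminus\gamma$ different from the one containing $v$, hence $u\in\inte\gamma$. Since $F$ is connected in $G\setminus\Gamma\subset G\setminus\gamma$, $F$ lies in a single component of $G\setminus\gamma$, and $v\in\ext\gamma$ forces $F\subset\ext\gamma$, so $u\notin F$ and $e\in\partial F$. Being a component of $G\setminus\Gamma$, $F$ also satisfies $\partial F\subset\bigcup_{\gamma''\in\Gamma}\gamma''$. Next, because $G$ is infinite, connected, and one-ended and $F$ is finite, $\binf F$ is nonempty and basis connected by Lemma~\ref{lem:bdry_basis_conn}. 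I would then show $\binf F\subset\gamma^{*}$ for some single $\gamma^{*}\in\Gamma$: if not, choose $\gamma_1\in\Gamma$ meeting $\binf F$ but not containing it, and bipartition $\binf F=(\binf F\cap\gamma_1)\sqcup(\binf F\setminus\gamma_1)$; by basis connectivity some basis cycle crosses this bipartition and hence meets two distinct compatible contours $\gamma_1,\gamma_2\in\Gamma$. But each contour is itself basis connected, so compatibility of $\gamma_1,\gamma_2$ implies the bipartition $\gamma_1\sqcup\gamma_2$ of $\gamma_1\cup\gamma_2$ is uncrossed by any basis cycle, a contradiction.

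Now enlarge to $F'=F\cup\{\text{finite components of }G\setminus F\}$. Because $\partial F$ is finite and $G$ is connected there are only finitely many such components, so $F'$ is finite and $\partial F'=\binf F\subset\gamma^{*}$. Thus removing $\gamma^{*}$ from $G$ disconnects $F'$ from its complement, so $F'$ is a union of finite components of $G\setminus\gamma^{*}$; hence $F'\subset\inte\gamma^{*}$ and in particular $v\in\inte\gamma^{*}$. To conclude I would compare $\gamma$ and $\gamma^{*}$ under $\prec$. (a) If $\gamma=\gamma^{*}$, then $v\in\ext\gamma\cap\inte\gamma=\emptyset$. (b) $\gamma\prec\gamma^{*}$ is ruled out because $\gamma$ is external in $\Gamma$. (c) If $\gamma^{*}\prec\gamma$, Lemma~\ref{lem:spo_order} gives $v\in\inte\gamma^{*}\subset\inte\gamma$, contradicting $v\in\ext\gamma$. (d) If $\gamma,\gamma^{*}$ are incomparable, then $\gamma\not\prec\gamma^{*}$, so by Lemma~\ref{lem:order} no edge of $\gamma$ is incident to any vertex of $\inte\gamma^{*}$; but $e\in\gamma$ has endpoint $v\in\inte\gamma^{*}$, contradiction.

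The delicate step is the passage from the edge-level containment $\binf F\subset\gamma^{*}$ to the vertex-level containment $F\subset\inte\gamma^{*}$: $\partial F$ may contain edges leading to finite components of $G\setminus F$ that are \emph{not} in $\binf F$, so one must first enlarge $F$ to $F'$ to absorb those components. Keeping $F'$ finite uses both the one-endedness of $G$ and the finiteness of $\Gamma$.
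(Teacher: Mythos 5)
Your proof is correct and follows the same route as the paper's: both arguments assume the component of $v$ in $G\setminus\Gamma$ is finite, invoke Lemma~\ref{lem:bdry_basis_conn} to get basis connectivity of its exterior boundary, use compatibility to force that boundary into a single contour $\gamma^{*}$, conclude $v\in\inte\gamma^{*}$, and then derive a contradiction with the externality of $\gamma$ via Lemma~\ref{lem:order}. You fill in some details the paper leaves implicit --- why compatibility (via basis connectivity of each contour) pins down a unique $\gamma^{*}$, the passage from $\binf F\subset\gamma^{*}$ to $v\in\inte\gamma^{*}$, and the case $\gamma^{*}=\gamma$ --- and your four-way case split in the last step could be compressed (if $\gamma^{*}\neq\gamma$, Lemma~\ref{lem:order} applied to $v\in\inte\gamma^{*}$ incident to $\gamma$ gives $\gamma\prec\gamma^{*}$ directly), but the argument is sound.
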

  
  \begin{proof}
    Let $v\in\ext\gamma$ be incident to an edge of $\gamma$, and let
    $H_{v}$ denote the connected component of $v$ in
    $G\setminus\Gamma$. Suppose, towards a contradiction, that $H_{v}$
    is not the exterior component of $G\setminus\Gamma$. Then $H_{v}$
    is finite, and hence $\binf H_{v}$ is basis connected by
    Lemma~\ref{lem:bdry_basis_conn}. Thus edges contained in a subset
    $\Gamma'\subset \Gamma\setminus\{\gamma\}$ separate $v$ from
    infinity. Since $\binf H_{v}$ is basis connected, there can be at
    most one contour in $\Gamma'$. But $\Gamma'= \{\gamma'\}$ is
    a contradiction, as in this case $\gamma\prec\gamma'$ by
    Lemma~\ref{lem:order}, contradicting $\gamma$ being
    external. Hence $H_{v}$ must be the exterior component of
    $G\setminus\Gamma$. 
  \end{proof}

\subsection{Contour representations of independent sets}
\label{sec:bij}

Let $\Lam$ be a finite induced subgraph of $G$. The initial step in
carrying out Pirogov--Sinai theory is to find a representation of the
partition functions $Z^{\even}_{\Lam}$ and $Z^{\odd}_{\Lam}$ in terms
of contours.  We achieve this in Proposition~\ref{lem:bijection}
below after establishing some further terminology. 

Let $\gamma$ be a contour.  A connected component $H$ of
$G \setminus \gamma$ is called an \emph{even (occupied) component} if
every vertex in $H$ incident to an edge of $\gamma$ is odd. The
terminology refers to the fact that vertices incident to edges of
$\gamma$ are unoccupied in the construction of contours from an
independent set used in the proof of
Proposition~\ref{prop:single_contour_config}.  Similarly, a connected
component $H$ of $G \setminus \gamma$ is an \emph{odd (occupied)
  component} if every vertex in $H$ incident to an edge of $\gamma$ is
even.  Every edge of $\gamma$ has one endpoint in an even component
and one endpoint in an odd component.  See
Figure~\ref{fig:Z2-hc-contours}.

We will label contours according to their exterior components: call a
contour $\gamma$ an \emph{even contour} if the exterior component of
$G \setminus \gamma$ is even, and call $\gamma$ an \emph{odd contour}
if the exterior component is odd.  Let $\inte_{\odd} \gamma$ be all
vertices in an odd interior component of $G \setminus \gamma$, and
$\inte_{\even} \gamma$ be all vertices in an even interior component
of $\gamma$. Thus
$\inte\gamma = \inte_{\even}\gamma\cup\inte_{\odd}\gamma$. Note that
$\inte_{\odd} \gamma$ and $\inte_{\even} \gamma$ may each induce
disconnected subgraphs.  We will sometimes abuse notation and identify
$\inte_{\even}\gamma$ and $\inte_{\odd}\gamma$ with the subgraphs they
induce.

\begin{figure}
  \centering
  \begin{tikzpicture}
    \draw[black,dotted] (-.5,-.5) grid (5.5,5.5);
    \node[ovE] at (0,0) {};
    \node[evO] at (1,0) {};
    \node[ovE] at (2,0) {};
    \node[evO] at (3,0) {};
    \node[ovE] at (4,0) {};
    \node[evO] at (5,0) {};
    \node[evO] at (0,1) {};
    \node[ovE] at (1,1) {};
    \node[evO] at (2,1) {};
    \node[evE] at (3,1) {};
    \node[evO] at (4,1) {};
    \node[ovE] at (5,1) {};
    \node[ovE] at (0,2) {};
    \node[evO] at (1,2) {};
    \node[evE] at (2,2) {};
    \node[ovO] at (3,2) {};
    \node[evE] at (4,2) {};
    \node[evO] at (5,2) {};
    \node[evO] at (0,3) {};
    \node[evE] at (1,3) {};
    \node[ovO] at (2,3) {};
    \node[evE] at (3,3) {};
    \node[evO] at (4,3) {};
    \node[ovE] at (5,3) {};
    \node[ovE] at (0,4) {};
    \node[evO] at (1,4) {};
    \node[evE] at (2,4) {};
    \node[evO] at (3,4) {};
    \node[ovE] at (4,4) {};
    \node[evO] at (5,4) {};
    \node[evO] at (0,5) {};
    \node[ovE] at (1,5) {};
    \node[evO] at (2,5) {};
    \node[ovE] at (3,5) {};
    \node[evO] at (4,5) {};
    \node[ovE] at (5,5) {};    
  \end{tikzpicture}
    \hspace{4em}
    \begin{tikzpicture}
    \draw[black,dotted] (-.5,-.5) grid (5.5,5.5);

    \draw[eedge] (0,3) -- (1,3);
    \draw[eedge] (1,3) -- (1,2);
    \draw[eedge] (1,2)-- (2,2);
    \draw[eedge] (2,2)-- (2,1);
    \draw[eedge] (2,1) -- (3,1);
    \draw[eedge] (3,1)-- (3,0);
    \draw[eedge] (3,1) -- (4,1);
    \draw[eedge] (4,1) -- (4,2);
    \draw[eedge] (4,2) -- (5,2);
    \draw[eedge] (4,2) -- (4,3);
    \draw[eedge] (4,3) -- (3,3);
    \draw[eedge] (3,3)-- (3,4);
    \draw[eedge] (3,4)-- (2,4);
    \draw[eedge] (2,4) -- (2,5);
    \draw[eedge] (2,4) -- (1,4);
    \draw[eedge] (1,4) -- (1,3);
    \draw[ultra thick, gray!70] (1,3) -- (2,3) -- (2,4) -- (2,3) --
    (3,3) -- (3,2) -- (2,2) -- (2,3);
    \draw[ultra thick, gray!70] (3,1) -- (3,2) -- (4,2);

    \node[evE] at (0,0) {};
    \node[evO] at (1,0) {};
    \node[evE] at (2,0) {};
    \node[evO] at (3,0) {};
    \node[evE] at (4,0) {};
    \node[evO] at (5,0) {};
    \node[evO] at (0,1) {};
    \node[evE] at (1,1) {};
    \node[evO] at (2,1) {};
    \node[evE] at (3,1) {};
    \node[evO] at (4,1) {};
    \node[evE] at (5,1) {};
    \node[evE] at (0,2) {};
    \node[evO] at (1,2) {};
    \node[evE] at (2,2) {};
    \node[evO] at (3,2) {};
    \node[evE] at (4,2) {};
    \node[evO] at (5,2) {};
    \node[evO] at (0,3) {};
    \node[evE] at (1,3) {};
    \node[evO] at (2,3) {};
    \node[evE] at (3,3) {};
    \node[evO] at (4,3) {};
    \node[evE] at (5,3) {};
    \node[evE] at (0,4) {};
    \node[evO] at (1,4) {};
    \node[evE] at (2,4) {};
    \node[evO] at (3,4) {};
    \node[evE] at (4,4) {};
    \node[evO] at (5,4) {};
    \node[evO] at (0,5) {};
    \node[evE] at (1,5) {};
    \node[evO] at (2,5) {};
    \node[evE] at (3,5) {};
    \node[evO] at (4,5) {};
    \node[evE] at (5,5) {};  
  \end{tikzpicture}
  \caption{Left: a hard-core configuration on a subset of
    $\mathbb{Z}^{2}$. Large circles indicate vertices contained in the
    hard-core configuration. Dark and light shading indicates even and
    odd vertices, respectively. Right: the contour $\gamma$ (wavy
    lines) corresponding to the hard-core configuration on the
    right if $\cB$ is the cycle basis from Figure~\ref{fig:contour}. The solid grey edges are edges of the interior component
    $\inte\gamma$. In this example
      $\inte\gamma=\inte_{\even}\gamma$. The contour is an
    \emph{odd} contour as the vertices in the exterior component of
    $\gamma$ that are contained in edges of $\gamma$ are even.}
    \label{fig:Z2-hc-contours}
\end{figure}
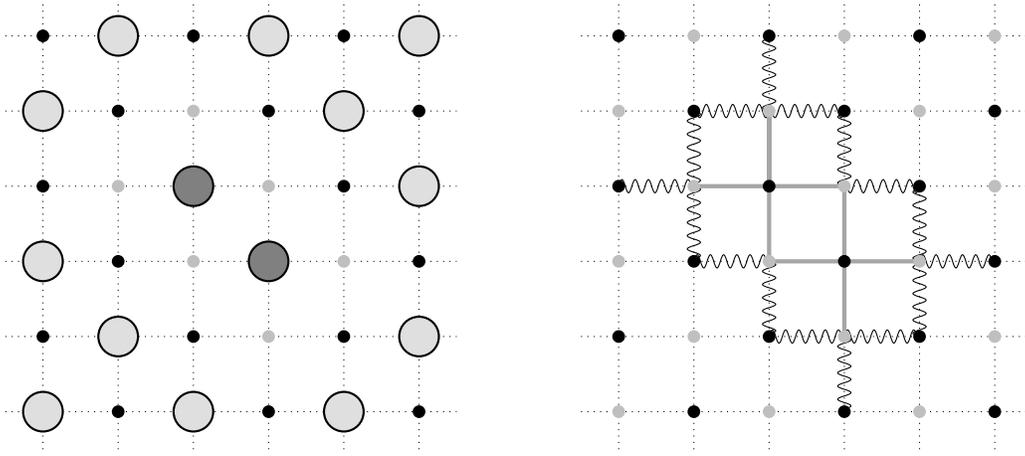

Let $\Gamma$ be a collection of compatible contours. We say $\Gamma$
is \emph{external even} if each external contour in $\Gamma$ is an
even contour. We say $\Gamma$ is \emph{matching} if for any connected
component $H$ of $G \setminus \Gamma$, the vertices of $H$ incident to
an edge of a contour in $\Gamma$ have the same parity.  Matching is
not a pairwise condition on the contours in $\Gamma$; this is
unimportant in the present section, but will require consideration in
Section~\ref{sec:contour-polym-repr} below.

Let $\cC(\Lambda)\subset\cC(G)$ denote the subset of contours
contained in $\Lambda$. Define $\cCb(\Lambda)\subset \cC(\Lambda)$ to
be the subset of contours that are disjoint from all cycles $C\in \cB$
that exit~$\Lambda$.

\begin{prop}
  \label{lem:bijection}
  Let $\Lambda$ be a finite induced subgraph of $G$ with
  $\partial\Lambda=\binf\Lambda$. There is a bijection between
  $\cI^{\even}(\Lambda)$ 
  and collections of compatible, matching, external even contours in
  $\cCb(\Lambda)$. Analogously, there is a bijection between
  $\cI^{\odd}(\Lambda)$ and collections of compatible,
  matching, external odd contours in $\cCb(\Lambda)$.
\end{prop}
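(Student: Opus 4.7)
The plan is to construct explicit mutually inverse maps between $\cI^{\even}(\Lambda)$ and the collections of compatible, matching, external even contours lying in $\cCb(\Lambda)$; the $\cI^{\odd}$ case follows by parity swap. For the forward direction, given $I \in \cI^{\even}(\Lambda)$, I first extend $I$ to an independent set $\tilde I$ on all of $V(G)$ by declaring every even vertex in $V(G)\setminus V(\Lambda)$ to be occupied and every odd vertex there to be unoccupied. The even boundary condition on $U$ together with the definition of $U$ ensures $\tilde I$ is an independent set on $G$ that agrees with the all-even ground state on $(V(G)\setminus V(\Lambda))\cup U$. Let $E_{\tilde I}$ be the set of unoccupied edges of $\tilde I$; every such edge has its even endpoint in $V(\Lambda)\setminus U$, so $E_{\tilde I}$ is finite. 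Define $\Gamma(I)$ to be the set of maximal basis-connected components of $E_{\tilde I}$; a short verification shows basis-connectedness induces an equivalence relation on $E_{\tilde I}$, so these components partition $E_{\tilde I}$.

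The main task is to show $\Gamma(I)$ consists of compatible, matching, external even contours in $\cCb(\Lambda)$. Each $\gamma\in\Gamma(I)$ lies in $\cCb(\Lambda)$ because any basis cycle exiting $\Lambda$ meets $V(\Lambda)$ only in $U$, whereas the even endpoint of any edge of $\gamma$ lies in $V(\Lambda)\setminus U$. Compatibility of distinct $\gamma_1,\gamma_2\in\Gamma(I)$ is automatic: if $\gamma_1\cup\gamma_2$ were basis connected, then for any $e_2\in\gamma_2$ the set $\gamma_1\cup\{e_2\}$ would also be basis connected, contradicting the maximality of $\gamma_1$. The most delicate step is the parity condition defining a contour: for each connected component $A$ of $G\setminus\gamma$, all vertices of $A$ incident to $\gamma$ must share a parity. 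I would argue by contradiction, taking $v_1,v_2\in A$ of opposite parities incident to $\gamma$ via edges $e_1,e_2\in\gamma$, and choosing a path in $A$ from $v_1$ to $v_2$. Independence of $\tilde I$ forces a constrained alternation of occupied and unoccupied vertices along this path, and combining this with the basis cycles witnessing the basis-connectedness of $\gamma$ would produce a strictly larger basis-connected subset of $E_{\tilde I}$ containing $\gamma$, contradicting maximality. Matching of $\Gamma(I)$ then follows because $G\setminus\Gamma(I)=G\setminus E_{\tilde I}$ and the alternating-occupation argument pins the parity of contour-incident vertices within each component. External even holds since $\tilde I$ agrees with the even ground state far from $\Lambda$, forcing the exterior of any external contour to be even.

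For the inverse map, given a compatible, matching, external even collection $\Gamma\subset\cCb(\Lambda)$, I assign to each connected component $A$ of $G\setminus\Gamma$ a parity $p_A$: if some $\gamma\in\Gamma$ is incident to $A$ then $p_A$ is the common parity of contour-incident vertices in $A$ (well-defined by matching); otherwise set $p_A\coloneqq\odd$. Define the global configuration $\tilde I$ on $V(G)$ to occupy in each component $A$ precisely those vertices whose parity differs from $p_A$, and let $I(\Gamma)$ be its restriction to $V(\Lambda)$. The external even condition forces $p_{A_\infty}=\odd$ on the unique infinite component $A_\infty$, and Lemma~\ref{lem:ext-connect} combined with $\Gamma\subset\cCb(\Lambda)$ places all of $U$ in $A_\infty$, ensuring $I(\Gamma)\in\cI^{\even}(\Lambda)$. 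Independence is immediate: edges in $\Gamma$ have both endpoints unoccupied while edges internal to a component $A$ have exactly one occupied endpoint.

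Finally, I verify the two maps are mutually inverse. For the forward-then-backward composition, the occupation pattern of $\tilde I$ within each component of $G\setminus\Gamma(I)$ is exactly the parity prescription used in the inverse, so we recover $\tilde I$ on $V(G)$ and hence $I$ on $V(\Lambda)$. For backward-then-forward, the extension used in the forward map recovers the globally-defined $\tilde I$ from the backward construction (since both agree with the even ground state outside $\Lambda$), and the unoccupied edges of this $\tilde I$ are exactly $\bigcup_{\gamma\in\Gamma}\gamma$; by Lemma~\ref{lem:comp-disjoint} the contours in $\Gamma$ are pairwise disjoint and by compatibility they are not jointly basis-connected, so the maximal basis-connected components of this union are precisely the $\gamma\in\Gamma$. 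The principal obstacle in the entire argument is the parity verification for $\gamma\in\Gamma(I)$ in the forward map, where the interplay between the independence of $\tilde I$ and the cycle-basis definition of contours is most subtle.
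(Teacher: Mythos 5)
Your construction mirrors the paper's proof in structure: explicit maps in both directions, built on the observation that the unoccupied edges of a configuration decompose into maximal basis-connected pieces, with occupation inside each component of the complement determined by the parity of vertices incident to the surrounding contours. The extension of $I$ to a global configuration $\tilde I$ on $V(G)$ is a cosmetic difference from the paper, which works directly with the set $S$ of unoccupied edges in $\Lambda$ after noting the boundary conditions confine $S$ to $E(\Lambda)$.

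The gap is at the step you yourself flag as the principal obstacle: showing each maximal basis-connected subset $\gamma$ of the unoccupied edges is a valid contour, i.e., that within each component $A$ of $G\setminus\gamma$ all vertices incident to $\gamma$ share a parity. Your sketch --- take $v_1,v_2\in A$ of opposite parity, a path in $A$ between them, invoke alternation of occupation, and combine ``with the basis cycles witnessing the basis-connectedness of $\gamma$'' --- does not close. The alternation constraint along a path in $A$ gives only that an odd number of unoccupied edges lie on the path; these edges sit inside $A$, are by construction disjoint from $\gamma$, and may well belong to entirely separate maximal basis-connected pieces. There is no mechanism in your sketch to reach these edges from $\gamma$ through basis cycles, and ``the basis cycles witnessing basis-connectedness of $\gamma$'' is not an object you can concretely manipulate. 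The paper's route is different and the difference is essential. It first establishes the parity constraint at the level of components $H$ of $G\setminus S$ (with $S$ the \emph{full} set of unoccupied edges, not just $\gamma$): paths in such a component avoid all unoccupied edges, so occupation alternates perfectly and pins the parity. Then, given $u,v\in A$ of opposite parity, it observes $v\notin H_u$, applies Lemma~\ref{lem:bdry_basis_conn} to get that $\binf H_u$ is basis connected, and uses maximality of $\gamma$ (together with the fact that $\binf H_u$ consists of unoccupied edges basis-connected to an edge of $\gamma$ at $u$) to conclude $\binf H_u\subseteq\gamma$. Since $v\notin H_u$, any $u$--$v$ path in $A$ must cross $\binf H_u\subseteq\gamma$, contradicting that $A$ is a component of $G\setminus\gamma$. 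The two moves you are missing are passing to components of $G\setminus S$ rather than of $G\setminus\gamma$, and the use of Lemma~\ref{lem:bdry_basis_conn} on $\binf H_u$; without them the contradiction does not materialize.
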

\begin{proof}
  We prove the statement for even boundary conditions; the proof for
  odd boundary conditions is exactly the same. Recall that even
  boundary conditions on $\Lambda$ mean that all even vertices in
  $\partial\Lambda$ or on a basis cycle that exits $\Lambda$ are fixed
  to be occupied.  It follows that for $I\in\cI^{\even}(\Lam)$, there
  are no unoccupied edges on basis cycles that exit $\Lambda$.

  \textbf{Contours determine independent sets.} First, we show that
  every collection of compatible, matching, external even contours
  $\Gamma \subseteq \cCb(\Lambda)$ determines a distinct independent
  set with even boundary conditions. This is similar to the proof of
  Proposition~\ref{prop:single_contour_config}: we construct an
  independent set $I$ whose unoccupied edges are exactly the edges in
  contours in $\Gamma$.

  Let $H$ be a connected component of $\Lambda\setminus\Gamma$ that is
  incident to an edge in $\partial\Lambda$. We first claim that every vertex
  in $H$ incident to an edge of $\Gamma$ is odd. Since
  $\partial\Lambda=\binf\Lambda$, $H$ is contained in the exterior
  component of $G\setminus\Gamma$. 
  Hence by item 2 of Lemma~\ref{lem:spo_order}, vertices in $H$ incident to
  edges of $\Gamma$ are in fact incident to edges of external contours
  in $\Gamma$. Since all external contours are even, such vertices are
  odd.
 
  Consider the components of $\Lambda \setminus \Gamma$ that are
  incident to an edge of $\partial \Lambda$.  We begin constructing
  $I$ by setting all even vertices in these components to be
  occupied. The previous paragraph implies edges in $\Gamma$ with one
  endpoint $v$ in these components have $v$ unoccupied by $I$. Every
  edge with both endpoints in one of these components has exactly one
  vertex occupied by $I$. To verify that $I$ is compatible with even
  boundary conditions we must check that edges $e$ in $\Lambda$ on
  basis cycles that exit $\Lambda$ have their even vertices
  occupied. It suffices to observe that if $e$ is such an edge, then
  $e$ must be in one of the components under consideration: the path
  from $e$ to $\partial \Lambda$ along such a basis cycle cannot
  contain any edges of contours in $\cCb(\Lambda)$, and thus cannot
  contain edges in~$\Gamma$.
 
  We now consider the other components of $\Lambda \setminus \Gamma$.
  Let $H$ be one such component. Because $\Gamma$ is matching, all
  vertices of $H$ that are endpoints of an edge of $\Gamma$ have the
  same parity. If these vertices are all even, we add all odd vertices
  of $H$ to $I$; if they are odd, we add all even vertices of $H$ to
  $I$.  After this, every edge with both endpoints in $H$ now has
  exactly one of its endpoints occupied by $I$. For any edge $e$ with
  exactly one endpoint in $H$, meaning $e$ is an edge of a contour in
  $\Gamma$, its endpoint within $H$ remains unoccupied by
  $I$. Repeating this process for all other connected components of
  $\Lambda \setminus \Gamma$ results in an independent set with even
  boundary conditions, and the set of unoccupied edges is exactly the
  set of edges that are in a contour in $\Gamma$. By
  Lemma~\ref{lem:comp-maximal}, this last property
  implies the map we have described is injective. In the next part of the
  proof we will construct the inverse.

  \textbf{Independent sets determine contours.} Next, we show that
  every independent set $I\in \cI^{\even}(\Lambda)$
  determines a distinct collection of compatible, matching, external
  even contours from $\cCb(\Lambda)$. The contours will be comprised
  exactly of the (possibly empty) set $S$ of edges of $G$ unoccupied by $I$.

  Consider a connected component $H$ of $G\setminus S$.  Let
  $a,b\in V(H)$ be two vertices of $H$ that are incident to edges of
  $S$. By construction, $a$ and $b$ are unoccupied in $I$.  Because
  $H$ is connected, there exists a path from $a$ to $b$ in $H$ that
  does not use any edge in $S$. The parity of vertices in this path
  alternate, as does the status of each vertex as being
  occupied/unoccupied. Hence $a$ and $b$ have the same parity. This
  implies that for each connected component $H$ of $G\setminus S$ the
  vertices incident to $S$ have the same parity.
  
  We will divide $S$ into contours by declaring the maximal basis
  connected subsets of $S$ to be contours.  To be sure this is
  well-defined, we must verify that if $\gamma$ is a maximal set of
  basis-connected edges from $S$ then $\gamma$ is in fact a valid
  contour, i.e., that in any component of $G\setminus\gamma$ all
  vertices incident to edges of $\gamma$ have the same
  parity. Suppose, for the sake of contradiction, that there is a
  connected component $H$ of $G\setminus\gamma$ that contains vertices
  $u$ and $v$, both incident to edges of $\gamma$, but of different
  parities.  Let $H_u$ be the connected component of $u$ in
  $G\setminus S$.  The previous paragraph implies $v \notin H_u$, as
  $v$ is incident to an edge of $\gamma\subset S$ and $v$ has the
  opposite parity of $u$. By Lemma~\ref{lem:bdry_basis_conn},
  $\binf H_u$ is basis connected. This implies $\binf H_u$ is
  contained in $\gamma$, as $\binf H_{u}$ consists of unoccupied edges
  that are basis connected to an edge of $\gamma$ incident to $u$, and
  because $\gamma$ is a maximal basis connected subset of
    unoccupied edges.  Since $v \not \in H_u$, any path from $u$ to
  $v$ in $H$ must pass through an edge of $\binf H_u$, and hence
  through an edge of $\gamma$. This contradicts the initial assumption
  that $u$ and $v$ are in the same connected component of
  $G\setminus\gamma$.  We conclude that in any component of
  $G\setminus\gamma$ all vertices incident to edges of $\gamma$ have
  the same parity, and therefore $\gamma$ is a valid contour.
  
  Let $\Gamma$ be the set of contours obtained by splitting $S$ into
  maximal basis-connected subsets.  Because $I$ has even boundary
  conditions, no edges in a basis cycle that exits $\Gamma$ are
  unoccupied, so all of these contours are in $\cCb(\Lambda)$.  Since
  $\partial\Lambda=\binf\Lambda$, $\Gamma$ is matching because the
  union of these contours is $S$ (by the parity-occupation argument
  used in the second paragraph of this part of the proof). Using this
  argument once more shows the external contours of $\Gamma$ are even
  by Lemma~\ref{lem:ext-connect} and the hypothesis
  $\partial\Lambda=\binf\Lambda$ since $I$ has even boundary
  conditions.  The contours in $\Gamma$ are pairwise compatible since
  $S$ was split into maximal basis-connected subsets.  Thus $\Gamma$
  is a set of compatible, matching, external even contours from
  $\cCb(\Lambda)$, comprised exactly of the unoccupied edges in $I$.

  To conclude, we must show this map is injective, i.e., that the set
  $S$ of unoccupied edges determines the independent set. The
    unoccupied edges do determine an independent set of $G$. Since
    $\Lambda$ is an induced subgraph of $G$, the independent sets in
    $\Lambda$ are a subset of those in $G$, implying
    injectivity.
\end{proof}

The next lemma says that the bijection of
Proposition~\ref{lem:bijection} is `local', in the sense that interior
components of contours satisfy the hypothesis of the proposition.
\begin{lemma}
  \label{lem:int_ind}
  For any finite connected $\Lambda$ that arises as a component of
  $\inte\gamma$ for some contour $\gamma$, $\Lambda$ is an induced
  subgraph of $G$ and $\partial\Lambda = \binf\Lambda$.
\end{lemma}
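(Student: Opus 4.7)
The plan is to settle the two assertions in turn, with the common observation that, by definition, $\Lambda$ is a connected component of the subgraph of $G$ induced by the vertex set $\inte\gamma$. Maximality of this component forces a useful isolation property: any vertex of $\inte\gamma$ that is adjacent in $G$ to some vertex of $V(\Lambda)$ must already lie in $V(\Lambda)$.

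The first claim is immediate once the definition is parsed in this way: since $\Lambda$ is a connected component of the subgraph of $G$ induced by $\inte\gamma$, it is itself the subgraph of $G$ induced by $V(\Lambda)$. There is nothing else to check.

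For the second claim the inclusion $\binf\Lambda\subseteq\partial\Lambda$ is immediate, so I would prove the reverse. Let $e=\{v,w\}\in\partial\Lambda$ with $v\in V(\Lambda)$ and $w\notin V(\Lambda)$. The main step is to show $w$ lies in the exterior (infinite) component of $G\setminus\gamma$. Suppose not; then $w\in\inte\gamma$, so $w$ belongs to some interior component $\Lambda'$ of $G\setminus\gamma$. The edge $e$ joins $v$ and $w$ in $G$ with both endpoints in $\inte\gamma$, so by the isolation observation above $w\in V(\Lambda)$, a contradiction. Hence $w$ lies in the exterior component of $G\setminus\gamma$. This component is connected and infinite, and its internal edges are not incident to $V(\Lambda)$, so it persists as a connected infinite subgraph of $G\setminus V(\Lambda)$. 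By one-endedness of $G$ the infinite component of $G\setminus V(\Lambda)$ is unique, so it contains the exterior and in particular $w$; thus $e\in\binf\Lambda$, yielding $\partial\Lambda=\binf\Lambda$. The only real obstacle is conceptual: reading "component of $\inte\gamma$" as a component of the subgraph induced in $G$ (so that different interior components of $G\setminus\gamma$ joined by $\gamma$-edges are absorbed into the same $\Lambda$) is what makes the argument go through; without this reading, a nested interior component could violate the claim.
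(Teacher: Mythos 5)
Your argument is correct, but note that it rests on a particular reading of ``component of $\inte\gamma$'': a connected component of the subgraph of $G$ induced by the vertex set $\inte\gamma$. Under that reading your first claim is tautological, and your second follows as you describe, with the load-bearing step being that every edge of $\partial\Lambda$ exits into the exterior component of $G\setminus\gamma$ (a consequence of the maximality of $\Lambda$ within $\inte\gamma$), after which one-endedness forces the exterior into the unique infinite component of $G\setminus V(\Lambda)$.

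The paper's own proof of the first claim argues differently, by parity: the vertices of $\Lambda$ meeting $\partial\Lambda$ all share a parity (this uses the contour condition, and implicitly the same fact you prove, that each $\partial\Lambda$-edge reaches the exterior), so bipartiteness excludes edges among them and hence no extra induced edges appear. For the second claim the paper declares it ``immediate''; your proof supplies what is being suppressed, and the observation you isolate is precisely the point that should be made explicit.

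Your closing caveat is sharp and worth taking seriously. If one instead reads ``component of $\inte\gamma$'' as a single interior component of $G\setminus\gamma$ (which is what is actually invoked in the paper when the lemma is later applied to components of $\inte_{\even}\gamma$ and $\inte_{\odd}\gamma$), the conclusion $\partial\Lambda = \binf\Lambda$ is not automatic: one interior component can enclose another, the two being joined only through edges of $\gamma$, and then $\partial\Lambda \supsetneq \binf\Lambda$. Your reading defuses this and your proof is correct under it, but the distinction between readings is relevant to where the lemma is used, not merely to how it is proved, so it is worth tracking carefully.
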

\begin{proof}
  By definition, the vertices in $\Lam$ split into vertices $V_{1}$
  incident to some edge of $\partial\Lambda$, and vertices $V_{2}$
  that are only incident to vertices of $\Lam$. Since vertices in
  $V_{1}$ have the same parity, there cannot be any edges in $G$
  between vertices in $V_{1}$. Hence the set of edges induced by $V_{1}$
  and $V_{2}$ is the same as the set of edges containing an endpoint
  in $V_{2}$, i.e., $\Lam$ is an induced subgraph.  The second
  claim is immediate as $G$ is one-ended.
\end{proof}

The next lemma characterizes subgraphs of $G$ that can arise as
interior components of contours. This determines the scope of the set
of finite graphs to which our algorithmic results
(Theorem~\ref{thm:expansion}) apply.
\begin{lemma}
  \label{lem:allowed}
  Suppose $\cB$ is bounded. A finite connected subgraph
  $\Lambda$ of $G$ is a connected component of $\inte_{\even} \gamma$
  for some contour $\gamma$ if and only if all vertices $v\in \Lambda$
  contained in $\partial\Lambda$ are odd.
\end{lemma}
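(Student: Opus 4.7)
The plan is to prove the characterization in both directions, with $\gamma \bydef \partial\Lambda$ providing the explicit construction for the nontrivial reverse direction.

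For the \emph{only if} direction, I would simply unpack the definitions. If $\Lambda$ is a connected component of $\inte_{\even}\gamma$ for some contour $\gamma$, then by Lemma~\ref{lem:int_ind} $\Lambda$ is a finite induced subgraph with $\partial\Lambda = \binf\Lambda$, and $\partial\Lambda$ coincides with the set of edges of $\gamma$ incident to $V(\Lambda)$. Because $\Lambda$ is an \emph{even} occupied component, the vertices of $\Lambda$ incident to edges of $\gamma$ are odd, and these are exactly the vertices of $V(\Lambda)\cap\partial\Lambda$, so the forward implication follows.

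For the \emph{if} direction, given $\Lambda$ finite and connected with every vertex of $V(\Lambda)\cap\partial\Lambda$ odd, I would set $\gamma \bydef \partial\Lambda$ and verify that $\gamma$ is a contour realizing $\Lambda$ as an even interior component. The set $\gamma$ is nonempty and finite (the former because $G$ is infinite and connected while $\Lambda$ is finite; the latter because bounded degree is the implicit standing assumption paired with a bounded cycle basis). The parity condition is immediate from bipartiteness: the components of $G\setminus\gamma$ are $\Lambda$ itself (still connected since $\gamma$ contains no edges internal to $V(\Lambda)$, with incident-to-$\gamma$ vertices odd by hypothesis) together with the components of $G[V(G)\setminus V(\Lambda)]$ (in each of which any incident-to-$\gamma$ vertex is adjacent across a $\partial\Lambda$-edge to an odd vertex of $\Lambda$, forcing it to be even). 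Given $\gamma$ is a contour, $\Lambda$ is automatically a finite, and hence interior, component of $G\setminus\gamma$ whose incident-to-$\gamma$ vertices are odd, giving it as a connected component of $\inte_{\even}\gamma$.

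The main obstacle lies in establishing basis-connectedness of $\gamma = \partial\Lambda$. Lemma~\ref{lem:bdry_basis_conn} gives basis-connectedness of $\binf\Lambda$, so this reduces to arguing that the parity hypothesis forces $\partial\Lambda = \binf\Lambda$, i.e., rules out finite pockets of $G\setminus V(\Lambda)$. This mirrors the conclusion of Lemma~\ref{lem:int_ind} but must now be derived from the parity hypothesis itself rather than from $\Lambda$ already arising as a contour interior. I expect this is the essential content of the lemma, and I anticipate that the boundedness of $\cB$ is used here, via a local analysis of a hypothetical pocket $P$ whose boundary parities are constrained by bipartiteness together with the parity hypothesis on the $\Lambda$-side of $\partial\Lambda$.
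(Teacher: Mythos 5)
Your ``only if'' direction is right and matches the paper: it is essentially immediate from the definition of an even interior component (together with Lemma~\ref{lem:int_ind}).

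For the ``if'' direction you and the paper take different routes, and you have correctly located the delicate point: whether $\gamma=\partial\Lambda$ is basis-connected, which by Lemma~\ref{lem:bdry_basis_conn} would follow from $\partial\Lambda=\binf\Lambda$. Where your plan breaks down is the anticipation that the parity hypothesis together with boundedness of $\cB$ forces $\partial\Lambda=\binf\Lambda$. It does not. Take $G=\Z^{2}$ with $\cB$ the set of unit plaquettes and $\Lambda=\{(x,y):1\le|x|+|y|\le 3\}$. Then $\Lambda$ is connected, every vertex of $\Lambda$ meeting $\partial\Lambda$ has the same parity (they all lie at odd $\ell^{1}$-distance from the origin), yet $\{(0,0)\}$ is a finite component of $G\setminus V(\Lambda)$, so $\partial\Lambda\neq\binf\Lambda$, and one checks directly that $\partial\Lambda$ is not basis-connected (no plaquette contains both one of the four edges at the origin and one of the outer boundary edges at $\ell^{1}$-distance $3$ to $4$). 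So the step you flagged cannot be filled in as you envision, and boundedness of $\cB$ is not in play there at all. The paper in fact uses boundedness of $\cB$ for something else: it constructs the explicit independent set $I$ consisting of the even vertices of $\Lambda$ and the odd vertices of $\Lambda^{c}$, uses boundedness of $\cB$ via Lemma~\ref{lem:bcequiv} only to verify that $I$ has odd boundary conditions on a large finite region, observes that the unoccupied edges of $I$ are exactly $\partial\Lambda$, and then appeals to Proposition~\ref{lem:bijection}. That said, Proposition~\ref{lem:bijection} returns the maximal basis-connected subsets of $\partial\Lambda$ as a \emph{collection} of contours, not a single contour, so the paper's argument does not obviously close the same gap you noticed: in the annulus example it yields two contours, and $\Lambda$ is a component of $\inte_{\even}\gamma$ for neither. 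Both your plan and the paper's proof appear to tacitly assume that $G\setminus V(\Lambda)$ is connected (equivalently $\partial\Lambda=\binf\Lambda$); with that hypothesis added, your direct verification via Lemma~\ref{lem:bdry_basis_conn} and the paper's construction both go through cleanly.
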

\begin{proof}
  We first construct a contour for a given $\Lambda$. Consider the
  independent set $I$ whose vertex set is the union of (i) even
  vertices in $\Lambda$ and (ii) odd vertices in $\Lambda^{c}$. This
  is an independent set as no even vertex in $\Lambda$ is connected to
  a vertex outside $\Lambda$. It is an independent set with odd
  boundary conditions by considering $\Lambda\subset\Lambda'$ for a
  sufficiently large $\Lambda$, see the proof of
  Lemma~\ref{lem:bcequiv}; it is here that we use $\cB$ bounded.
  Moreover, the set of unoccupied edges of $I$ is exactly
  $\partial\Lambda$ by construction. Proposition~\ref{lem:bijection}
  implies $\partial\Lambda$ is a contour $\gamma$, and by construction
  $\Lambda$ is a connected component of $\inte_{\even} \gamma$.

  For the converse, vertices of $\inte_{\even}\gamma$ incident to
  $\gamma$ are odd by definition. 
\end{proof}

\subsection{Contour weights}
\label{sec:peierls}

Proposition~\ref{lem:bijection} related independent sets and
collections of contours. In this section we re-express the weight of
an independent set in terms of a weight function on contours.  Recall
from Section~\ref{sec:md} that the weight of an independent set $I$ is
$\lambda^{I}$. Explicitly, in the bivariate setting (which is all that
is considered in this section), this simplifies to
$\lam^{I} = \lam_{\even}^{|I\cap V_\even|} \lam_{\odd}^{|I\cap
  V_\odd|}$.

For a contour~$\gamma$, define
\begin{equation}
  \label{eq:bbipartite}
  b_{\even}(\gamma) =
  \begin{cases}
    \phantom{-}| \inte_\odd \gamma \cap V_\even| & \text{$\gamma$ even} \\
    -|\inte_\even \gamma \cap V_\even| & \text{$\gamma$ odd}
  \end{cases},
  \qquad
  b_{\odd}(\gamma) =
  \begin{cases}
    -| \inte_\odd \gamma \cap V_\odd| & \text{$\gamma$ even} \\
    \phantom{-}|\inte_\even \gamma \cap V_\odd| & \text{$\gamma$ odd}
  \end{cases}.
\end{equation}
The next lemma says that $b_{\even}(\gamma)$ and $b_{\odd}(\gamma)$
measure the change in the number of even and odd occupied vertices due
to the presence of contour $\gamma$.

\begin{lemma}
  \label{lem:b_defn}
  Let $I$ and $I'$ be two independent sets in $\cI^{\even}$
  corresponding to compatible, matching, external even collections of
  contours $\Gamma$ and $\Gamma'$, respectively.  Suppose
  $\gamma \notin \Gamma'$, and $\Gamma = \Gamma' \cup \gamma$. Then
  $|I'\cap V_\even| - |I\cap V_\even| = b_{\even}(\gamma)$ and
  $|I'\cap V_\odd| - |I\cap V_\odd| = b_{\odd}(\gamma)$.
\end{lemma}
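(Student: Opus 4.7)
The plan is to leverage the explicit bijection in Proposition~\ref{lem:bijection} to localize where $I$ and $I'$ differ, then to compute the difference directly.

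The bijection's construction is local in the following sense: a vertex $v$ not incident to any contour edge of $\Gamma$ is occupied by $I$ if and only if its parity matches the ``occupation parity'' of its component in $\Lam\setminus\Gamma$, which (by matching) is determined by the parity of the boundary vertices of that component. Thus $v$'s occupation depends only on the innermost contour of $\Gamma$ containing $v$ in its interior (if any), and on whether $v$ lies in an even or odd component of that contour's interior. For $v\notin\inte\gamma$, the set of contours of $\Gamma$ containing $v$ equals the corresponding set for $\Gamma'$ (since $\gamma$ does not contain $v$), so $I(v)=I'(v)$. Similarly, if $v\in\inte\gamma''$ with $\gamma''\in\Gamma$ and $\gamma''\prec\gamma$, the innermost containing contour lies in $\Gamma\cap\Gamma'$, and again $I(v)=I'(v)$. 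Hence the only possible disagreements occur on the \emph{direct region} $R := \inte\gamma \setminus \bigcup\{\inte\gamma'' : \gamma''\in\Gamma,\ \gamma''\prec\gamma\}$.

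The main obstacle is to identify the structure of the contours nested inside $\gamma$ and the parity of the context outside $\gamma$, and the external-even hypothesis on both $\Gamma$ and $\Gamma'$ is critical here. A parity check shows any contour of $\Gamma$ nested in $\inte_\odd\gamma$ must itself be odd (its outward-facing boundary lies in an odd-occupied region and so consists of even vertices), and symmetrically any contour nested in $\inte_\even\gamma$ must be even. When $\gamma$ is even, an odd contour in $\inte_\odd\gamma$ would become external in $\Gamma'$ (if $\gamma$ is $\Gamma$-external) or violate matching for $\Gamma'$ (if $\gamma$ is $\Gamma$-internal, as $\inte_\odd\gamma$ would merge into an even-occupied container), in either case contradicting the hypothesis. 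Hence no such nested contour exists, so $R\cap\inte_\odd\gamma = \inte_\odd\gamma$. The symmetric conclusion $R\cap\inte_\even\gamma = \inte_\even\gamma$ holds when $\gamma$ is odd, which must then be strictly nested inside some larger contour of $\Gamma$. A final check shows that after removing $\gamma$, the region $R$ merges into an even-occupied component if $\gamma$ is even, and an odd-occupied component if $\gamma$ is odd.

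The computation then follows. Suppose $\gamma$ is even. On $R\cap\inte_\even\gamma$ the occupation parity is even in both $I$ and $I'$, contributing nothing. On $R\cap\inte_\odd\gamma = \inte_\odd\gamma$, the occupation parity flips from odd in $I$ to even in $I'$, contributing $+|\inte_\odd\gamma\cap V_\even|$ to $|I'\cap V_\even|-|I\cap V_\even|$ and $-|\inte_\odd\gamma\cap V_\odd|$ to the odd difference, matching $b_\even(\gamma)$ and $b_\odd(\gamma)$ per~\eqref{eq:bbipartite}. The case of odd $\gamma$ is symmetric and yields $-|\inte_\even\gamma\cap V_\even|$ and $+|\inte_\even\gamma\cap V_\odd|$, again as in~\eqref{eq:bbipartite}.
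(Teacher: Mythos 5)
Your proof is correct and takes essentially the same approach as the paper: rule out contours of $\Gamma'$ nested in $\inte_\odd\gamma$ (for $\gamma$ even) by showing that their presence would force the same contour-incident vertices to have conflicting parities under the matching conditions for $\Gamma$ and $\Gamma'$, then observe that $\inte_\odd\gamma$ flips from odd-occupied in $I$ to even-occupied in $I'$ while everything else agrees, and read off $b_\even(\gamma),b_\odd(\gamma)$. Your write-up is more explicit about the external-versus-internal case split for $\gamma$ and about isolating the ``direct region,'' but these are just expanded bookkeeping for what the paper compresses into the single sentence that $\Gamma$ and $\Gamma'$ cannot both be matching and that $I=I'$ on the even components of $G\setminus\gamma$.
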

\begin{proof}
  Suppose $\gamma$ is an even contour. Consider $\inte_\odd\gamma$,
  and note there cannot be other contours $\gamma'\in \Gamma'$
  interior to $\gamma$ contained in these odd components, as in that
  case it would not be possible for both $\Gamma$ and $\Gamma'$ to be
  matching: the vertices incident to such a contour $\gamma'$
  contained in the exterior component of $G\setminus\gamma'$ would be
  even, contradicting that they must be odd since $\gamma$ is an even
  contour. Thus, recalling Proposition~\ref{lem:bijection}, it must be
  that all odd vertices of these components are occupied in $I$, and
  all even vertices of these components are occupied in $I'$. On the
  other hand, $I = I'$ on even components of $G\setminus\gamma$.  It
  follows that
  \begin{equation*}
    |I'\cap V_\even| - |\inte_\odd \gamma \cap V_\even| = |I\cap V_\even|,
    \qquad |I'\cap V_\odd| + |\inte_\odd \gamma \cap V_\odd| = |I\cap V_\odd|,
  \end{equation*}
  which (after re-arrangement) is the desired conclusion.  The case
  when $\gamma$ is odd is analogous.
\end{proof}

We define the \emph{weight} of contour $\gamma$ to be 
\begin{equation}
  w_\gamma =
  \lam_{\even}^{-b_{\even}(\gamma)}\lam_{\odd}^{-b_{\odd}(\gamma)} = 
  \begin{cases}
    \lam_{\even}^{-|\inte_\odd \gamma \cap V_\even|} \lam_{\odd}^{|\inte_\odd\gamma \cap V_\odd|}  & \gamma \text{ even,} \\
     \lam_{\even}^{|\inte_\even\gamma \cap V_\even |}\lam_{\odd}^{-|\inte_\even\gamma \cap V_\odd| } & \gamma \text{ odd.}
  \end{cases} 
\end{equation}

\begin{lemma}
  \label{lem:contour_weights}
  Suppose $\Lambda$ is a finite induced subgraph of $G$.  For
  $I \in \cI^{\even}(\Lambda)$, let $\Gamma$ be the corresponding
  collection of compatible, matching, external even contours given by
  the bijection of Proposition~\ref{lem:bijection}.  Then
  \begin{equation*}
    \lambda^{I} = \lam_{\even}^{|V_\even \cap \Lambda|} \prod_{\gamma
      \in \Gamma} w_\gamma. 
  \end{equation*}
The same holds with odd replacing even. 
\end{lemma}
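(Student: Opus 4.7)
The plan is to induct on $|\Gamma|$, peeling off one contour at a time via Lemma~\ref{lem:b_defn}.

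For the base case $\Gamma = \emptyset$, the bijection of Proposition~\ref{lem:bijection} forces every edge of $\Lambda$ to be occupied by $I$, since the union of contours is the set of unoccupied edges in $I$. Since $G$ is connected and $\Lambda$ is a finite induced subgraph, every connected component of $\Lambda$ has a vertex incident to an edge of $\partial\Lambda$, and any such vertex lies in $U$. The even boundary condition then fixes all even vertices of $U$ to be occupied and all odd ones to be unoccupied; the alternating-parity structure along edges propagates component-by-component to force $I = V_\even \cap \Lambda$, so $\lam^I = \lam_\even^{|V_\even \cap \Lambda|}$, matching the formula with an empty product.

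For the inductive step with $|\Gamma| \geq 1$, choose an \emph{innermost} contour $\gamma \in \Gamma$, meaning one with no $\gamma'' \in \Gamma$ satisfying $\gamma'' \prec \gamma$; existence follows from $\prec$ being a strict partial order on the finite set $\Gamma$ (Lemma~\ref{lem:spo_order}). Set $\Gamma' = \Gamma \setminus \{\gamma\}$. I first verify that $\Gamma'$ is compatible, matching, and external even, so that by Proposition~\ref{lem:bijection} it corresponds to some $I' \in \cI^\even(\Lambda)$. Compatibility is immediate, and external even is easy: removing an innermost contour either drops an external contour or leaves the externals unchanged, so the externals of $\Gamma'$ form a subset of those of $\Gamma$, all even by hypothesis.

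Matching is the main point to check. Components of $G \setminus \Gamma'$ arise from those of $G \setminus \Gamma$ by restoring the edges of $\gamma$. Since $\gamma$ is innermost, $\inte\gamma$ contains no contour of $\Gamma'$, so the components of $G \setminus \Gamma$ lying in $\inte\gamma$ coincide with the connected components of $\inte\gamma$; none of their vertices is incident to any edge of $\Gamma'$. Hence in any merged component $H'$ of $G \setminus \Gamma'$ the vertices incident to $\Gamma'$ arise only from the exterior-of-$\gamma$ pieces $H_0 \subseteq G \setminus \Gamma$. Each such $H_0$ contains an exterior-side endpoint of some edge of $\gamma$, and by the definition of a contour all exterior-side endpoints of $\gamma$ share one parity $p_\gamma$. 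The matching property of $\Gamma$ applied to $H_0$ then forces every vertex of $H_0$ incident to $\Gamma$ (in particular, to $\Gamma'$) to have parity $p_\gamma$. Since $p_\gamma$ is intrinsic to $\gamma$ and does not depend on $H_0$, matching of $\Gamma'$ follows. Lemma~\ref{lem:b_defn} applied to $(\Gamma,\Gamma')$ then yields $\lam^I = \lam_\even^{-b_\even(\gamma)}\lam_\odd^{-b_\odd(\gamma)}\lam^{I'} = w_\gamma \lam^{I'}$, and the inductive hypothesis closes the induction. The odd boundary case is identical after exchanging the roles of even and odd; I expect the matching step to be the main obstacle, as it depends crucially on $\gamma$ being innermost so that no contour of $\Gamma'$ intrudes into $\inte\gamma$.
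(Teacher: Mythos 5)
Your proof is correct and is essentially the same telescoping argument as the paper's: the paper fixes an ordering $\gamma_1,\gamma_2,\dots$ with outer contours before inner ones and adds them one at a time, which is exactly the reverse of your inner-to-outer removal, and then applies Lemma~\ref{lem:b_defn} at each step. The one thing you add is an explicit verification that removing an innermost contour preserves the matching property — the paper simply asserts that "the chosen order on contours ensures that each $\Gamma_i$ is a collection of compatible, matched, external even contours" — and your argument for this (interior pieces of $\gamma$ are untouched by $\Gamma'$ since $\gamma$ is innermost, and all exterior-side pieces that merge across $\gamma$ inherit the single parity $p_\gamma$) is sound.
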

\begin{proof}
  Consider even boundary conditions. For $\Gamma_0 = \emptyset$ the
  corresponding independent set $I_{0}$ has no unoccupied edges, i.e.,
  it is $I_0 = V_\even \cap \Lambda$. This independent set has weight
  $\lam^{I_{0}}=\lam_{\even}^{|V_\even \cap \Lambda|}$. An analogous
  argument applies for $\Gamma_{0}=\emptyset$ with odd boundary
  conditions.

  Let $\gamma_1,\gamma_{2},\dots$ be an ordering of the contours of
  $\Gamma$ such that $j > i$ if $\gamma_j \prec \gamma_i$ for all
  $i\neq j$.  Let $\Gamma_i = \cup_{j = 1}^i \{\gamma_j\}$. The chosen
  order on contours ensures that each $\Gamma_i$ is a collection of
  compatible, matched, external even contours, and by
  Lemma~\ref{lem:bijection} each corresponds to an independent set
  $I_i$ with even boundary conditions.  Recalling the definition of
  contours weights, the lemma now follows by induction on $i$, as
  Lemma~\ref{lem:b_defn} implies
  $ \lam^{I_i} / \lam^{I_{i-1}} =
  \lam_{\even}^{-b_{\even}(\gamma_i)}\lam_{\odd}^{-b_{\odd}(\gamma_i)}.$
\end{proof}

\begin{prop}
  \label{prop:contour_partfn}
  Let $\Lambda$ be a finite induced subgraph of $G$. Then
  \begin{equation*}
    Z^{\even}_{\Lambda} = \lam_{\even}^{|V_\even \cap \Lambda|}\sum_{\substack{\Gamma \subseteq \cCb(\Lambda) \\ \text{compatible} \\ \text{matched} \\\text{external}\ \even}} \prod_{\gamma \in \Gamma} w_\gamma.
\end{equation*}
The same holds with odd replacing even. 
\end{prop}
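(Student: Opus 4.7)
The plan is to observe that Proposition~\ref{prop:contour_partfn} is essentially a direct bookkeeping consequence of the two preceding results. By the definition of the hard-core partition function with even (cycle basis) boundary conditions in Section~\ref{sec:boundary-conditions},
$$Z^{\even}_{\Lambda} = \sum_{I \in \cI^{\even}(\Lambda)} \lambda^{I}.$$
The first step is to re-index this sum using Proposition~\ref{lem:bijection}, which provides a bijection between $\cI^{\even}(\Lambda)$ and collections $\Gamma \subseteq \cCb(\Lambda)$ of compatible, matching, external even contours. This converts the sum over independent sets into the desired sum over contour collections.

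The second step is to insert Lemma~\ref{lem:contour_weights}, which expresses the weight of each $I \in \cI^{\even}(\Lambda)$ associated to $\Gamma$ as $\lambda^{I} = \lam_{\even}^{|V_\even \cap \Lambda|} \prod_{\gamma \in \Gamma} w_\gamma$. Since the prefactor $\lam_{\even}^{|V_\even \cap \Lambda|}$ is independent of $\Gamma$, it can be pulled outside the sum to yield the stated identity. The case of odd boundary conditions is verbatim with the roles of even and odd interchanged, as both Proposition~\ref{lem:bijection} and Lemma~\ref{lem:contour_weights} are stated symmetrically.

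There is no real obstacle remaining: the two main ingredients (the combinatorial bijection and the multiplicative weight identity) have already been established, and the proposition is their immediate conjunction. The only point to verify carefully is that the same prefactor $\lam_{\even}^{|V_\even \cap \Lambda|}$ applies uniformly to every $I \in \cI^{\even}(\Lambda)$, but this is guaranteed by the induction in Lemma~\ref{lem:contour_weights} starting from the base case $\Gamma = \emptyset$, which corresponds to the fully even-occupied configuration $I_0 = V_\even \cap \Lambda$.
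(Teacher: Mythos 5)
Your proof is correct and follows the same route as the paper, which simply cites Proposition~\ref{lem:bijection} and Lemma~\ref{lem:contour_weights}; you have just written out the bookkeeping explicitly.
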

\begin{proof}
The result follows from Propositions~\ref{lem:bijection} and~\ref{lem:contour_weights}.
\end{proof}

In Section~\ref{sec:convergence} we will need to control the weights
of contours based on their \emph{size} $|\gamma|$, where $|\gamma|$ is
the number of edges in $\gamma$. This requires comparing
$b_{\even}(\gamma)$ and $b_{\odd}(\gamma)$. We begin with a lemma that
will suffice for graphs that are vertex transitive within a parity
class.
\begin{lemma}
  \label{lem:VTPC}
  Suppose $G$ is a bipartite graph with all even vertices having
  degree $\Delta_{\even}$ and odd vertices having degree
  $\Delta_{\odd}$. Then
  $b_{\even}(\gamma) = \frac{|\gamma|}{\Delta_{\even}} +
  \frac{\Delta_{\odd}}{\Delta_{\even}} |\inte_{\odd}\gamma \cap
  V_{\odd}|$ for $\gamma$ even, and
  $b_{\odd}(\gamma) = \frac{|\gamma|}{\Delta_{\odd}} +
  \frac{\Delta_{\even}}{\Delta_{\odd}} |\inte_{\even}\gamma \cap
  V_{\even}|$ for $\gamma$ odd.
\end{lemma}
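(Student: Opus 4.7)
The result is a double-counting identity. I will prove only the even case; the odd case follows by swapping the roles of even/odd throughout. Recalling the definition in~\eqref{eq:bbipartite}, for $\gamma$ even we have $b_{\even}(\gamma) = |\inte_{\odd}\gamma \cap V_{\even}|$, so the claim is equivalent to
\begin{equation*}
  \Delta_{\even}\, |\inte_{\odd}\gamma \cap V_{\even}|
  \;=\; |\gamma| + \Delta_{\odd}\, |\inte_{\odd}\gamma \cap V_{\odd}|.
\end{equation*}
The plan is to compute the sum of degrees of vertices in $\inte_{\odd}\gamma$ separately on each side of the bipartition, and then identify the edges being counted.

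\textbf{Step 1: Identify where edges of $\gamma$ meet $\inte_\odd\gamma$.} By Lemma~\ref{lem:diff_comps}, each edge of $\gamma$ has its two endpoints in distinct components of $G \setminus \gamma$. Since $\gamma$ is an even contour, the exterior component of $G\setminus\gamma$ is even, hence every vertex of the exterior incident to $\gamma$ is odd. Combined with the fact that vertices of an odd interior component incident to $\gamma$ are even, one checks that for every edge $e=\{u,v\}\in\gamma$ with $u\in V_\even$ and $v\in V_\odd$, the vertex $u$ lies in $\inte_\odd\gamma$ while $v$ lies either in $\inte_\even\gamma$ or in the exterior component. Consequently, the number of incidences between edges of $\gamma$ and even vertices of $\inte_\odd\gamma$ equals exactly $|\gamma|$, and odd vertices of $\inte_\odd\gamma$ are not incident to any edge of $\gamma$.

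\textbf{Step 2: Double count.} Summing degrees over $\inte_\odd\gamma \cap V_\even$ and using that $G$ is $\Delta_\even$-regular on even vertices gives $\Delta_\even \,|\inte_\odd\gamma \cap V_\even|$. This sum counts each edge internal to $\inte_\odd\gamma$ exactly once (each such edge has one even and one odd endpoint by bipartiteness, both in $\inte_\odd\gamma$), plus each edge of $\gamma$ with its even endpoint in $\inte_\odd\gamma$, which by Step 1 accounts for all $|\gamma|$ edges of $\gamma$. Summing degrees over $\inte_\odd\gamma \cap V_\odd$ gives $\Delta_\odd\, |\inte_\odd\gamma \cap V_\odd|$; since odd vertices in $\inte_\odd\gamma$ have all neighbors in $\inte_\odd\gamma$ (Step 1), this sum counts precisely the internal edges of $\inte_\odd\gamma$. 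Equating the two expressions for the number of internal edges yields the displayed identity and hence the lemma.

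\textbf{Expected obstacle.} There is no serious difficulty; the only point requiring care is the parity bookkeeping in Step 1, in particular the observation that \emph{every} edge of $\gamma$ contributes one incidence to even vertices of $\inte_\odd\gamma$ (rather than some being ``lost'' to even interior components). This is precisely where one-endedness and the fact that $\gamma$ is an \emph{even} contour—so that the exterior component is even—are used.
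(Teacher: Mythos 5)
Your proof is correct and is essentially the same argument as the paper's: a degree double-count over $\inte_\odd\gamma$, where the key point is that every edge of an even contour $\gamma$ has its even endpoint in $\inte_\odd\gamma$ and its odd endpoint outside. You organize the count by equating two expressions for the number of internal edges, while the paper expands the degree sum over $\inte_\odd\gamma\cap V_\even$ directly, but these are the same bookkeeping; if anything, your Step~1 makes explicit the parity observations that the paper leaves tacit.
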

\begin{proof}
  We consider the case of $\gamma$ even; the odd case is
  analogous. Observe that
  \begin{equation*}
    b_{\even}(\gamma) =
    \frac{1}{\Delta_{\even}}\sum_{v\in\inte_{\odd}\gamma\cap
      V_{\even}} \Delta_{\even}
    =
    \frac{|\gamma|}{\Delta_{\even}} + \frac{1}{\Delta_{\even}}
    \sum_{w\in\inte_{\odd}\gamma \cap V_{\odd}}\Delta_{\odd},
  \end{equation*}
  where the second equality has used that if $w$ is adjacent to $v$,
  then either $\{v,w\}$ is an edge in $\gamma$ or else $w\in
  \inte_{\odd}\gamma\cap V_{\odd}$. The factor $\Delta_{\odd}$
  accounts for the number of $v$ adjacent to a given $w$.
\end{proof}

The next lemma will handle the vertex transitive setting. Recall that
our results in this setting concern $\lam_{\even}=\lam_{\odd}=\lam$,
so we set $b(\gamma) = b_{\even}(\gamma)+b_{\odd}(\gamma)$.
\begin{lemma}
  \label{lem:b_transitive}
  Suppose $G$ is a $\Delta$-regular graph, and let $\gamma$ be a
  contour of $G$. Then $b(\gamma) = |\gamma|/\Delta$.
\end{lemma}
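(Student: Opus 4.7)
The plan is to observe that this lemma is essentially a special case of Lemma~\ref{lem:VTPC}, since a $\Delta$-regular bipartite graph is a biregular graph with $\Delta_\even = \Delta_\odd = \Delta$. I would first treat the case when $\gamma$ is even. Lemma~\ref{lem:VTPC} then gives
\begin{equation*}
  b_\even(\gamma) = \frac{|\gamma|}{\Delta} + |\inte_\odd\gamma \cap V_\odd|,
\end{equation*}
while the definition of $b_\odd(\gamma)$ in~\eqref{eq:bbipartite} for an even contour gives $b_\odd(\gamma) = -|\inte_\odd\gamma \cap V_\odd|$. Summing these cancels the interior odd-vertex count and yields $b(\gamma) = |\gamma|/\Delta$. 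The case when $\gamma$ is odd follows by an identical argument after swapping the roles of even and odd.

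Alternatively, one can give a direct double-counting proof that makes the geometry more transparent. Consider $\gamma$ even, so that every odd component of $G\setminus\gamma$ is an interior component (the exterior component is even), and hence every edge of $\gamma$ has its even endpoint lying in $\inte_\odd\gamma$. Recalling that within an odd component the only vertices incident to $\gamma$ are even, counting edges out of $\inte_\odd\gamma \cap V_\even$ in two ways gives
\begin{equation*}
  \Delta\,|\inte_\odd\gamma \cap V_\even| \;=\; \Delta\,|\inte_\odd\gamma \cap V_\odd| \;+\; |\gamma|,
\end{equation*}
since edges from an even interior vertex either stay in $\inte_\odd\gamma$ (going to an odd interior vertex, all of whose $\Delta$ neighbors are themselves in $\inte_\odd\gamma \cap V_\even$) or are edges of $\gamma$. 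Dividing by $\Delta$ recovers $b(\gamma) = |\inte_\odd\gamma \cap V_\even| - |\inte_\odd\gamma \cap V_\odd| = |\gamma|/\Delta$, and the odd case is symmetric.

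There is no real obstacle: the only point requiring any care is confirming that for an even contour, every edge of $\gamma$ has its even endpoint in the interior (so no $\gamma$-edges are ``lost'' to an exterior odd component), which is exactly the one-endedness plus the labelling convention for even contours.
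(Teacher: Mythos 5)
Your proof is correct and your first approach is exactly the one-line argument the paper gives: apply Lemma~\ref{lem:VTPC} with $\Delta_\even = \Delta_\odd = \Delta$ and combine with the definition~\eqref{eq:bbipartite} so that the interior vertex counts cancel. Your alternative double-counting argument is also sound, but it is essentially an unrolling of the proof of Lemma~\ref{lem:VTPC} itself, so it is not a genuinely different route.
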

\begin{proof}
  Apply Lemma~\ref{lem:VTPC}, using that $\Delta_{\even}=\Delta_{\odd}=\Delta$.
\end{proof}

Lastly we consider matched automorphic graphs.
\begin{lemma}
  \label{lem:b_matched}
  If $G$ is matched automorphic and has maximum degree $\Delta$, then
  $b(\gamma) \geq |\gamma| / \Delta$.
\end{lemma}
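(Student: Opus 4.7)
I would prove $b(\gamma)\geq |\gamma|/\Delta$ via a short double count that leverages the degree-preserving property of the matched automorphism $\pi$. Assume $\gamma$ is even (the odd case is entirely symmetric), write $M=\{\{v,\pi(v)\}:v\in V_\even\}$ for the induced perfect matching, and define
\[
S \;:=\; \{\,v\in \inte_\odd\gamma \cap V_\even : \{v,\pi(v)\}\in \gamma\,\}.
\]

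The first step is to identify $b(\gamma)$ with $|S|$. For any $w\in \inte_\odd\gamma\cap V_\odd$, the defining property of an odd-occupied interior component of an even contour forces $w$ not to be incident to $\gamma$, so every neighbor of $w$, and in particular the matched partner $\pi^{-1}(w)\in V_\even$, stays inside $\inte_\odd\gamma$. Hence $\pi$ restricts to a bijection $\inte_\odd\gamma\cap V_\even \setminus S \leftrightarrow \inte_\odd\gamma\cap V_\odd$, and therefore
\[
b(\gamma) \;=\; |\inte_\odd\gamma \cap V_\even| - |\inte_\odd\gamma \cap V_\odd| \;=\; |S|.
\]

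The second step is to double-count $\sum_{v\in \inte_\odd\gamma \cap V_\even} \deg(v)$. Each incidence of such a $v$ is either with an odd neighbor in $\inte_\odd\gamma$ or with an odd vertex outside (in which case the edge lies in $\gamma$). Counting the internal edges from the odd side, and using once more that every $w\in \inte_\odd\gamma\cap V_\odd$ has all of its $\deg(w)$ neighbors inside $\inte_\odd\gamma$, gives
\[
\sum_{v\in \inte_\odd\gamma \cap V_\even}\deg(v) \;=\; |\gamma| + \sum_{w\in \inte_\odd\gamma \cap V_\odd}\deg(w).
\]
On the other hand, since $\pi$ is a graph automorphism we have $\deg(\pi(v))=\deg(v)$, and so the bijection of the first step gives
\[
\sum_{v\in \inte_\odd\gamma \cap V_\even \setminus S}\deg(v) \;=\; \sum_{w\in \inte_\odd\gamma \cap V_\odd}\deg(w).
\]
Subtracting the second identity from the first yields $\sum_{v\in S}\deg(v) = |\gamma|$, and since each $\deg(v)\leq \Delta$ we conclude $|S|\geq |\gamma|/\Delta$, which combined with the first step is the lemma.

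The argument is essentially a clean two-line double-count, so I do not anticipate a significant technical obstacle. Conceptually, two separate properties of the matched automorphism do the work: the bijective identity $b(\gamma)=|S|$ uses only that $M$ is a perfect matching (together with the parity of contour incidences), while the bound $|S|\geq |\gamma|/\Delta$ crucially uses that $\pi$ is a \emph{degree-preserving} automorphism. This is precisely what can fail for biregular graphs, and it is why the result strengthens the accounting of Lemma \ref{lem:VTPC}.
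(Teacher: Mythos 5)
Your proof is correct and takes essentially the same approach as the paper: the paper partitions $\inte_\odd\gamma\cap V_\even$ into $S_1=\{v:\pi(v)\notin\inte_\odd\gamma\}$ and its complement $S_2$, and your set $S=\{v:\{v,\pi(v)\}\in\gamma\}$ coincides with $S_1$ since $\partial\inte_\odd\gamma=\gamma$. The only cosmetic omission is that you argue only the "into" direction $\pi^{-1}\colon\inte_\odd\gamma\cap V_\odd\to(\inte_\odd\gamma\cap V_\even)\setminus S$ before asserting a bijection; the forward direction (if $v\notin S$ then $\{v,\pi(v)\}\notin\gamma$, so $\pi(v)$ lies in the same component of $G\setminus\gamma$ as $v$, hence in $\inte_\odd\gamma$) is needed for the equality used in the subtraction step and is a one-line fill.
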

\begin{proof}
  Let $\pi$ be a matched automorphism of $G$. Without loss of
  generality, we assume $\gamma$ is an even contour.  We will consider
  both $\inte_\odd\gamma\cap V_\odd$ and
  $\inte_\odd\gamma \cap V_\even$, but will divide the latter into two
  sets: The set $S_1$ of vertices $v\in \inte_\odd\gamma \cap V_\even$
  such that $\pi(v)$ is not in $\inte_\odd\gamma$, and the set $S_2$
  of vertices $v \in \inte_\odd\gamma \cap V_\even$ such that $\pi(v)$
  is in $\inte_\odd\gamma$. Because all vertices adjacent to
  $\partial \inte_\odd\gamma = \gamma$ are even, by counting all edges
  within $\inte_\odd\gamma$ in two different ways we see that
  \begin{equation*}
    \sum_{v \in \inte_\odd\gamma \cap V_\odd} \mathrm{deg}(v) = \sum_{v \in \inte_\odd\gamma \cap V_\even} \mathrm{deg}(v) - |\partial \inte_\odd\gamma|
  \end{equation*}
  Rearranging terms and splitting a sum into terms for $S_1$ and
  $S_2$,
  \begin{align*}
    |\gamma| = |\partial \inte_\odd\gamma|
    &= \sum_{v \in \inte_\odd\gamma \cap V_\even} \mathrm{deg}(v) - \sum_{v \in \inte_\odd\gamma \cap V_\odd} \mathrm{deg}(v) \\
    & = \sum_{v \in S_1} \mathrm{deg}(v) + \sum_{v \in S_2}
      \mathrm{deg}(v) - \sum_{v \in \inte_\odd\gamma \cap V_\odd} \mathrm{deg}(v) \, .
  \end{align*}
  Because all vertices $v \in \inte_\odd\gamma \cap V_\odd$ must have
  $\pi^{-1}(v) \in \inte_\odd\gamma$, and $v$ and $\pi(v)$ must have
  the same degree, this becomes
  \begin{equation*}
    |\gamma|  
    = \sum_{v \in S_1} \mathrm{deg}(v) \leq \Delta |S_1|
  \end{equation*}
  This implies
  $b(\gamma) = |\inte_\odd\gamma \cap V_\even| - |\inte_\odd\gamma
  \cap V_\odd| = |S_1| \geq |\gamma|/\Delta$, as desired.
\end{proof}

\subsection{External Contour Representation and Polymer Representation}
\label{sec:contour-polym-repr}

This section transforms the contour representation of an independent
set into a polymer model representation. The basic definitions of the
latter are given in Section~\ref{sec:polymer}, and the transformation
is carried out in Section~\ref{sec:PS}.

\subsubsection{Polymer Models}
\label{sec:polymer}

We briefly recall the setup of abstract polymer
systems~\cite{kotecky1986cluster}.  A \emph{polymer model} consists of
three things. First, a set $\cP$ of \emph{polymers}. Second, a
pairwise symmetric compatibility relation on $\cP$, denoted by
$\gamma \sim \gamma'$. Incompatibility is denoted by
$\gamma \not\sim \gamma'$, and self-incompatibility is required, i.e.,
$\gamma \nsim \gamma$ for all $\gamma \in \cP$. Lastly, there is a
weight $w\colon \cP\to \mathbb{C}$; we denote the weight of
$\gamma\in\cP$ by $w_{\gamma}$.  Suppose $\cP$ is finite. The
\emph{polymer partition function} for $\cP$ is then
\begin{equation*}
\Xi(\cP)= \sum_{\substack{\Gamma \subseteq \cP \\ \text{compatible}}}
\prod_{\gamma \in \Gamma} w_{\gamma}  
\end{equation*}
where the sum is over all pairwise compatible collections of polymers.
The empty collection of polymers contributes $1$ to the sum.  

The next section will show how $Z^{\even}_{\Lambda}$ and
$Z^{\odd}_{\Lambda}$ can be written as polymer model partition
functions, with polymers being contours. This is useful as
  there are well-known criteria for establishing convergent expansions
  for $\log \Xi$, as will be recalled in Section~\ref{sec:CEPoly}.

\subsubsection{External Contour Representation and Polymer Representation}
\label{sec:PS}

For $\Lambda$ a finite induced subgraph of
$G$ with $\partial\Lam=\binf\Lam$, recall the representation of the partition function
$Z^{\even}_{\Lambda}$ given by Proposition~\ref{prop:contour_partfn}:
  \begin{equation*}
    Z^{\even}_{\Lambda} 
    =
    \lambda_{\even}^{|V_\even \cap \Lambda|} {\sum_{\Gamma\subset
        \cCb(\Lambda)}}^{\!\!\!\!\!\even}
  \   \prod_{\gamma \in
      \Gamma} w_{\gamma},
\end{equation*}
where the notation $\sum^{\even}$ denotes a sum over compatible,
matched, external even contours. We use $\sum^{\odd}$ analogously.

It will be convenient to work with a normalized version of
$Z^{\even}_{\Lambda}$:
\begin{equation}
  \label{eq:Xie}
    \Xi^{\even}_{\Lambda} = \frac{Z^{\even}_{\Lambda} }{\lambda_{\even}^{|V_\even \cap \Lambda|}} = {\sum_{\Gamma\subset
        \cCb(\Lambda)}}^{\!\!\!\!\!\even}
  \  \prod_{\gamma \in
      \Gamma} w_{\gamma}.
\end{equation}
Note that $\Xi^{\even}_\Lambda$ has leading term $1$ when viewed as a
polynomial in variables $w_{\gamma}$ by
Proposition~\ref{lem:bijection}.  This expression for
$\Xi^{\even}_\Lambda$ is not yet a polymer partition function.  This
is because (recall Section~\ref{sec:bij}) the matching condition
placed on collections of contours is not a pairwise condition.  In
this section we adapt the standard approach to circumventing this
issue: we rewrite $\Xi^{\even}_\Lambda$ as a sum over compatible even
contours, with no further constraints.  This paves the way for proving
that the resulting polymer models have convergent expansions in
Section~\ref{sec:convergence}.

We begin with two preparatory lemmas that show that resumming contours
contained inside a contour $\gamma$ yields partition functions with
appropriate boundary conditions.

\begin{lemma}
  \label{lem:compat-internal-odd}
  Let $\gamma$ and $\gamma'$ be two contours with
  $\gamma'\prec\gamma$.  Then $\gamma'$ is compatible with $\gamma$ if
  and only if $\gamma'$ is in
  $\cCb(\inte \gamma)=\cCb(\inte_{\odd} \gamma)\cup \cCb(\inte_{\even}
  \gamma)$.
\end{lemma}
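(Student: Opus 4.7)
The plan is to prove both implications by reducing compatibility of $\gamma$ and $\gamma'$ to a single linking condition on the cycle basis, and then unpacking this condition geometrically in terms of the interior component $H$ of $G \setminus \gamma$ that contains $\gamma'$. The core observation is that when $\gamma$ and $\gamma'$ are disjoint basis-connected sets of edges, $\gamma \cup \gamma'$ is basis connected if and only if some basis cycle $C \in \cB$ meets both $\gamma$ and $\gamma'$. For any non-trivial bipartition $E_1 \sqcup E_2 = \gamma \cup \gamma'$, if $\{E_1, E_2\}$ splits $\gamma$ non-trivially then basis connectedness of $\gamma$ supplies a witness cycle, and symmetrically for $\gamma'$; the only remaining case is $\{E_1, E_2\} = \{\gamma, \gamma'\}$, whose witness is by definition a basis cycle meeting both. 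Since compatibility forces disjointness by Lemma~\ref{lem:comp-disjoint}, compatibility is equivalent to the non-existence of such a linking basis cycle.

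Next I would pin down the geometry. Since $\gamma'$ is basis connected and every endpoint of an edge of $\gamma'$ lies in an interior component of $G \setminus \gamma$, I claim $\gamma'$ lies in a single interior component $H$. In the direction where compatibility is assumed, this follows from Lemma~\ref{lem:order}; in the direction where $\gamma' \in \cCb(\inte\gamma)$ is assumed, it follows directly because otherwise basis connectedness of $\gamma'$ would produce a basis cycle containing edges of $\gamma'$ in two distinct components of $\inte\gamma$, and any such cycle would necessarily exit $\inte\gamma$, contradicting the $\cCb$ hypothesis. In either case $\gamma' \subseteq E(H)$ with $H$ a component of $\inte_\even\gamma$ or of $\inte_\odd\gamma$; by Lemma~\ref{lem:int_ind} we have $\partial H = \binf H$, and $\partial H \subseteq \gamma$ because $H$ is a component of $G \setminus \gamma$.

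Combining these pieces yields the lemma. I claim that a basis cycle $C$ contains edges of both $\gamma$ and $\gamma'$ if and only if $C$ contains an edge of $\gamma'$ and exits $H$. The reverse direction is immediate since $\partial H \subseteq \gamma$. For the forward direction, if the $\gamma$-edge of $C$ lies in $\partial H$ the conclusion is immediate, and otherwise that $\gamma$-edge has no endpoint in $H$ while the $\gamma'$-edge lies in $E(H)$, so $C$ must cross $\partial H$ at least once to close up, hence exits $H$. Combining with the first paragraph, compatibility is equivalent to the statement that no basis cycle exiting $H$ meets $\gamma'$, which is exactly $\gamma' \in \cCb(H)$; and because $\gamma'$ lies in the single component $H \subseteq \inte_\even\gamma$ or $\inte_\odd\gamma$, this is equivalent to $\gamma' \in \cCb(\inte_\even\gamma) \cup \cCb(\inte_\odd\gamma) = \cCb(\inte\gamma)$. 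The main obstacle is the geometric crossing argument, which requires carefully distinguishing edges of $\gamma$ incident to $H$ from those elsewhere in the contour and using one-endedness (via Lemma~\ref{lem:int_ind}) to conclude that any exit from $H$ is forced through $\gamma$.
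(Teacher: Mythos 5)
Your overall strategy is sound and lands in the same place as the paper, but via a slightly different route: you characterize compatibility of disjoint basis-connected sets as non-existence of a ``linking'' basis cycle, locate $\gamma'$ inside a single interior component $H$ of $G\setminus\gamma$, and then translate the no-linking condition into $\gamma' \in \cCb(H)$. The paper instead works directly with $\inte_\odd\gamma$ (or $\inte_\even\gamma$), i.e.\ the union of all odd (even) interior components, and never passes through a single component~$H$. Both reach the same conclusion, and your ``linking cycle'' reformulation is a nice way to make the compatibility condition tangible.

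There is, however, a real gap in your last step. You assert that $\gamma' \in \cCb(H)$ is equivalent to $\gamma' \in \cCb(\inte_\odd\gamma)$ (say $H$ is an odd component) simply ``because $\gamma'$ lies in the single component $H$.'' This is not immediate: $\inte_\odd\gamma$ may have several components, and a basis cycle could exit $H$ by passing into a \emph{different} odd component without obviously exiting $\inte_\odd\gamma$. To rule this out you need the parity observation that every edge of $\gamma$ joins an odd component to an even component (equivalently, no edge of $\gamma$ has both endpoints in $\inte_\odd\gamma$, since those endpoints would then both be even). With this, any cycle that leaves $H$ through $\partial H \subset \gamma$ necessarily lands in an even component or the exterior, hence outside $\inte_\odd\gamma$; so exiting $H$ forces exiting $\inte_\odd\gamma$, and your equivalence follows. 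Notably, this parity fact is exactly the engine of the paper's backward direction, so working with $\inte_\odd\gamma$ directly (rather than via a single component $H$) lets you avoid the detour and removes the gap entirely. Two smaller points: your reduction to linking cycles needs disjointness in the $\cCb \Rightarrow$ compatible direction too (this holds because $\gamma' \subset E(\inte_\odd\gamma)$ and edges of $\gamma$ never have both endpoints in $\inte_\odd\gamma$, but say so); and $\partial H \subset \gamma$ is immediate from $H$ being a component of $G\setminus\gamma$ and does not require Lemma~\ref{lem:int_ind} or one-endedness.
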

\begin{proof}
  Suppose $\gamma'$ is compatible with $\gamma$.  By
  Lemma~\ref{lem:order}, all of the endpoints of edges of $\gamma'$
  are contained in $\inte_{\even}(\gamma)$ or $\inte_{\odd}(\gamma)$,
  i.e., $\gamma'\in \cC(\inte_{\even} \gamma)$ or
  $\gamma'\in\cC(\inte_{\odd}\gamma)$. We consider the case
  $\gamma'\in \cC(\inte_{\odd}\gamma)$; the other case is exactly
  analogous.  Compatibility means there is no basis cycle containing
  both an edge of $\gamma$ and an edge of $\gamma'$. Because
  $\partial \inte_{\odd} \gamma \subseteq \gamma$, this means that
  $\gamma'$ cannot contain any edge of a basis cycle that leaves
  $\inte_{\odd} \gamma$, and hence
  $\gamma' \in \cCb(\inte_{\odd} \gamma)$.

  If $\gamma' \in \cCb(\inte_{\odd} \gamma)$, then $\gamma'$ cannot
  contain any edges that are part of a basis cycle that exits
  $\inte_{\odd} \gamma$. On the other hand, no edge of $\gamma$ can
  have both endpoints in $\inte_{\odd}\gamma$, as these endpoints
  would have the same parity by the definition of an odd component. So
  every basis cycle containing an edge of $\gamma$ exits
  $\inte_{\odd}(\gamma)$.  This shows $\gamma$ and $\gamma'$ are
  compatible. The case $\gamma' \in \cCb(\inte_{\even} \gamma)$ is
  identical.
\end{proof}

Given a contour $\gamma$, let $\cM_{\gamma}$ denote the set of
collections of contours $\Gamma$ such that (i) for all
$\gamma'\in\Gamma$, $\gamma'\prec\gamma$ and (ii) $\Gamma\cup\gamma$
is a matched and compatible collection of contours.

\begin{lemma}
  \label{lem:internal_xi}
  Let $\Lambda$ be a finite induced subgraph of $G$ with
  $\partial\Lam=\binf\Lam$. Then for any contour
  $\gamma \in \overline{\cC}( \Lambda)$,
  \begin{equation}
   \mathop{\sum_{\Gamma\subseteq \cCb(\Lambda)}}_{\Gamma\in
     \cM_{\gamma}} \prod_{\gamma' \in \Gamma}  w_{\gamma'} =
   \Xi^{\even}_{\inte_{\even} \gamma} \Xi^{\odd}_{\inte_{\odd}
     \gamma}. 
\end{equation}
\end{lemma}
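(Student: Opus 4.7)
The plan is to establish a term-by-term factorization of the sum on the left, indexed by the connected components of $\inte\gamma$. For each such component $H$, I will extract the contours of $\Gamma$ lying inside $H$, show that these sub-collections are exactly the configurations counted by $\Xi^{\even}_{H}$ or $\Xi^{\odd}_{H}$ depending on whether $H$ is an even- or odd-occupied component, and then conclude using the factorization of contour weights.

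The first step will be to locate each $\gamma' \in \Gamma$ inside a single component of $\inte\gamma$. Since $\gamma' \prec \gamma$ and $\gamma'$ is compatible with $\gamma$, Lemma~\ref{lem:order} places $\gamma'$ in a single connected component $H$ of $\inte\gamma$, and Lemma~\ref{lem:compat-internal-odd} yields $\gamma' \in \cCb(\inte_{\even}\gamma) \cup \cCb(\inte_{\odd}\gamma)$. Distinct components of $\inte_{\even}\gamma$ (resp.\ $\inte_{\odd}\gamma$) are separated in $G$ by edges of $\gamma$ or of $\inte_{\odd}\gamma$ (resp.\ $\inte_{\even}\gamma$), so any basis cycle through an edge of $\gamma'$ that exited $H$ would in fact exit $\inte_{\even}\gamma$ (or $\inte_{\odd}\gamma$), contradicting Lemma~\ref{lem:compat-internal-odd}. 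Hence $\gamma' \in \cCb(H)$. Writing $\Gamma_H = \{\gamma' \in \Gamma : \gamma' \subseteq E(H)\}$ yields a disjoint decomposition $\Gamma = \bigsqcup_H \Gamma_H$, with each $\Gamma_H \subseteq \cCb(H)$.

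Next I would show that $\Gamma \mapsto (\Gamma_H)_H$ is a bijection from $\cM_\gamma$ onto tuples where each $\Gamma_H$ is a compatible, matched collection in $\cCb(H)$ whose external contours are even contours if $H$ is even-occupied and odd contours if $H$ is odd-occupied. Compatibility inside each $\Gamma_H$ is inherited, and contours in $\Gamma_H$ and $\Gamma_{H'}$ with $H \ne H'$ are automatically compatible because any basis cycle containing an edge of a contour in $\cCb(H)$ is trapped inside $H$ (using $\partial H \subseteq \gamma$ together with the definition of $\cCb(H)$). The matching condition for $\Gamma \cup \{\gamma\}$ on $G$ breaks up over the components of $G \setminus (\Gamma \cup \{\gamma\})$: the exterior component of $G \setminus \gamma$ is unaltered and matches automatically since $\gamma$ is a contour, and every other component is a sub-component of some $H \setminus \Gamma_H$. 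For interior sub-components this reproduces the matching of $\Gamma_H$; for the unique sub-component that meets $\partial H$, the parity of the $H$-vertices incident to $\gamma$ (odd if $H$ is even-occupied, even if $H$ is odd-occupied) forces the external contours of $\Gamma_H$ to have the complementary exterior parity, which is precisely the external-even or external-odd condition.

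Finally, weights factor as $\prod_{\gamma' \in \Gamma} w_{\gamma'} = \prod_H \prod_{\gamma' \in \Gamma_H} w_{\gamma'}$, and $\Xi^{\even}_{\inte_{\even}\gamma}$ splits as the product of $\Xi^{\even}_H$ over components $H$ of $\inte_{\even}\gamma$ because compatibility, matching, and external-parity all decompose over components, and each such $H$ satisfies the hypotheses of Proposition~\ref{prop:contour_partfn} by Lemma~\ref{lem:int_ind} (which also guarantees $\cCb(H) \subseteq \cCb(\Lambda)$ since basis cycles exiting $\Lambda$ automatically exit $H$). Summing over tuples and applying Proposition~\ref{prop:contour_partfn} to each factor then gives the claimed identity. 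The main obstacle I anticipate is the matching bookkeeping: one must verify carefully that the global matching condition on $\Gamma \cup \{\gamma\}$ is equivalent to the conjunction of the local matching on each $\Gamma_H$ with the external-parity constraint, paying particular attention to the sub-component of $H \setminus \Gamma_H$ that simultaneously meets $\partial H$ and the external contours of $\Gamma_H$, and to confirming that the forward and reverse directions of the bijection really are mutually inverse.
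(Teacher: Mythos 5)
Your proof is correct and uses the same core ingredients as the paper's: Lemma~\ref{lem:compat-internal-odd} to identify compatibility with $\gamma$ as membership in $\cCb(\inte_{\even}\gamma)\cup\cCb(\inte_{\odd}\gamma)$, the observation that the matching condition relative to $\gamma$ translates into an external-parity (and hence boundary-condition) constraint, and then Proposition~\ref{prop:contour_partfn}. The difference is one of granularity: you push the decomposition down to the individual connected components $H$ of $\inte\gamma$, establish a component-wise bijection, and then reassemble $\Xi^{\even}_{\inte_{\even}\gamma}$ as a product of $\Xi^{\even}_{H}$. The paper instead stops at the coarser split $\inte\gamma = \inte_{\even}\gamma \sqcup \inte_{\odd}\gamma$ and applies Proposition~\ref{prop:contour_partfn} directly to the (possibly disconnected) regions $\inte_{\even}\gamma$ and $\inte_{\odd}\gamma$, which is noticeably shorter because Proposition~\ref{prop:contour_partfn} already handles disconnected $\Lambda$ without a separate product formula. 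Your finer decomposition is sound and has the minor merit of invoking Lemma~\ref{lem:int_ind} only for connected $H$, which is exactly the form in which it is stated; but it also creates extra bookkeeping (cross-component compatibility, per-component matching) that the paper's route sidesteps. Both are valid; the paper's is the more economical presentation of the same idea.
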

\begin{proof}
  By Lemma~\ref{lem:compat-internal-odd}, 
    \begin{equation*}
     \mathop{\sum_{\substack{\Gamma\subseteq \cCb(\Lambda)\\ \Gamma\in \cM_{\gamma}}}}
    \prod_{\gamma' \in \Gamma} w_{\gamma'} 
    = \left( \sum_{\substack{\Gamma\subseteq \cCb(\inte_{\even}(\gamma))\\ \Gamma\in \cM_{\gamma}}}\prod_{\gamma' \in
        \Gamma} w_{\gamma'} \right) \left( \sum_{\substack{\Gamma\subseteq \cCb(\inte_{\odd}(\gamma))\\ \Gamma\in \cM_{\gamma}}}\prod_{\gamma' \in \Gamma} w_{\gamma'}\right)  
\end{equation*}
For a collection of compatible contours $\Gamma$ in
$\cCb(\inte_{\even}\gamma)$, $\Gamma\cup\gamma$ matching simply means
that all external contours of $\Gamma$ are even.  By
Proposition~\ref{prop:contour_partfn}, this is the same as enforcing
an even boundary condition on $\inte_{\even} \gamma$. Thus
\begin{equation*}
   \sum_{\substack{\Gamma\subseteq \cCb(\inte_{\even}(\gamma))\\ \Gamma\in
     \cM_{\gamma}}}\prod_{\gamma' \in \Gamma} w_{\gamma'}  =
   \Xi^{\even}_{\inte_{\even} \gamma} 
\end{equation*}
The same holds for $\inte_{\odd} \gamma$ and $\Xi^{\odd}_{\inte_{\odd}
  \gamma}$, proving the lemma.
\end{proof}

Next we reformulate $\Xi^{\even}_\Lambda$ in a manner more convenient
for analysis. Let $\cCb^{\even}(\Lambda)$ denote the subset of even
contours in $\cCb(\Lambda)$. We say a compatible collection
$\Gamma\subset\cCb(\Lambda)$ is \emph{mutually external} if each
$\gamma\in\Gamma$ is external for $\Gamma$.  Write
$\sum^{\even,\ext}_{\cCb(\Lambda)}$ to denote a sum over collections
of compatible and mutually external contours in
$\cCb^{\even}(\Lambda)$. Lastly, define
\begin{equation}
  \label{eq:weo}
  \widetilde{w}_{\gamma} =
  w_\gamma\dfrac{\Xi^{\odd}_{\inte_{\odd} \gamma}}{\Xi^{\even}_{\inte_{\odd}
        \gamma}} \quad
    \text{if $\gamma$ is even,}
    \qquad
    \widetilde{w}_{\gamma} = 
    w_\gamma\dfrac{\Xi^{\even}_{\inte_{\even} \gamma}}{\Xi^{\odd}_{\inte_{\even}
        \gamma}} \quad
 \text{if $\gamma$ is odd}.
\end{equation}

\begin{lemma}
  \label{lem:excon}
  Let $\Lambda$ be a finite induced subgraph of $G$ with $\partial
  \Lambda = \binf \Lambda$. Then 
    \begin{equation*}
  \Xi_{\Lambda}^{\even}  =  {\sum_{\Gamma\subset
        \cCb(\Lambda)}}^{\!\!\!\!\!\even,\ext} \prod_{\gamma \in
      \Gamma} \widetilde{w}_\gamma \Xi^{\even}_{\inte_{\even} \gamma}
    \Xi^{\even}_{\inte_{\odd} \gamma}, \qquad \text{and} \qquad
    \Xi_{\Lambda}^{\odd}  =   {\sum_{\Gamma\subset
        \cCb(\Lambda)}}^{\!\!\!\!\!\odd,\ext} \prod_{\gamma \in \Gamma} \widetilde{w}_\gamma \Xi^{\odd}_{\inte_{\even} \gamma} \Xi^{\odd}_{\inte_{\odd} \gamma}. 
  \end{equation*}
\end{lemma}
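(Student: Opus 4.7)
The plan is to rewrite $\Xi^{\even}_\Lambda$ by grouping the sum in Proposition~\ref{prop:contour_partfn} according to the set of external contours, then resumming the internal contributions via Lemma~\ref{lem:internal_xi} and repackaging the result using $\widetilde{w}_\gamma$. The odd case is identical after swapping parities.

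First I would stratify the even contour sum by the subset $\Gamma_{\mathrm{ext}}\subseteq\Gamma$ of external contours. Since $\Gamma$ is required to be external even, every $\gamma\in\Gamma_{\mathrm{ext}}$ is even, and $\Gamma_{\mathrm{ext}}$ is a compatible, mutually external family in $\cCb^{\even}(\Lambda)$. The non-external contours in $\Gamma$ are then precisely those strictly contained (in the $\prec$ sense) in some element of $\Gamma_{\mathrm{ext}}$. Invoking the product structure in~\eqref{eq:product} (which is precisely where claim 3 of Lemma~\ref{lem:spo_order} is used), the sum over all $\Gamma$ extending a fixed $\Gamma_{\mathrm{ext}}$ factors as a product indexed by $\gamma\in\Gamma_{\mathrm{ext}}$ of independent sums; for each $\gamma$, the sum ranges over collections $\Gamma'\subseteq\cCb(\Lambda)$ with $\Gamma'\in\cM_\gamma$ (the set introduced before Lemma~\ref{lem:internal_xi}). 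The matching hypothesis for $\Gamma\cup\{\gamma\}\cup\ldots$ reduces pairwise to the matching condition relative to each external $\gamma$ separately, because distinct external contours interact only through their shared exterior component, where the external-even condition already forces the boundary parity consistently.

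Second, I would apply Lemma~\ref{lem:internal_xi} to each factor, obtaining
\begin{equation*}
\Xi^{\even}_\Lambda
= \mathop{{\sum_{\Gamma_{\mathrm{ext}}\subset\cCb(\Lambda)}}}\nolimits^{\!\!\!\!\!\even,\mathrm{ext}}\ \prod_{\gamma\in\Gamma_{\mathrm{ext}}} w_\gamma\,\Xi^{\even}_{\inte_{\even}\gamma}\,\Xi^{\odd}_{\inte_{\odd}\gamma}.
\end{equation*}
Here Lemma~\ref{lem:int_ind} is needed to ensure that $\inte_{\even}\gamma$ and $\inte_{\odd}\gamma$ each satisfy the induced-subgraph/boundary hypothesis under which $\Xi^{\even}_\cdot$ and $\Xi^{\odd}_\cdot$ were defined, so that Lemma~\ref{lem:internal_xi} is applicable.

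Finally I would use the definition~\eqref{eq:weo} to rewrite each factor. For an even contour $\gamma$,
\begin{equation*}
w_\gamma\,\Xi^{\even}_{\inte_{\even}\gamma}\,\Xi^{\odd}_{\inte_{\odd}\gamma}
= w_\gamma\,\frac{\Xi^{\odd}_{\inte_{\odd}\gamma}}{\Xi^{\even}_{\inte_{\odd}\gamma}}\cdot\Xi^{\even}_{\inte_{\even}\gamma}\,\Xi^{\even}_{\inte_{\odd}\gamma}
= \widetilde{w}_\gamma\,\Xi^{\even}_{\inte_{\even}\gamma}\,\Xi^{\even}_{\inte_{\odd}\gamma},
\end{equation*}
which is exactly the desired form. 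For the odd identity, every external contour in the stratification is odd, Lemma~\ref{lem:internal_xi} still gives the factor $\Xi^{\even}_{\inte_{\even}\gamma}\Xi^{\odd}_{\inte_{\odd}\gamma}$ (its statement is parity-agnostic for $\gamma$), and the analogous algebraic identity $w_\gamma\,\Xi^{\even}_{\inte_{\even}\gamma}\,\Xi^{\odd}_{\inte_{\odd}\gamma}=\widetilde{w}_\gamma\,\Xi^{\odd}_{\inte_{\even}\gamma}\,\Xi^{\odd}_{\inte_{\odd}\gamma}$ completes the argument.

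The only genuinely subtle step is justifying the factorization in the first paragraph: one must check that the global matching condition on $\Gamma$ is equivalent to the matching of $\{\gamma\}\cup\Gamma'$ within $\inte\gamma$ for each external $\gamma$. This reduces to observing that components of $G\setminus\Gamma$ which touch two different external contours lie in the common exterior (by Lemma~\ref{lem:ext-connect}), where the external-even (resp.\ external-odd) hypothesis already fixes the parity of incident vertices, so no cross-constraints survive between different external contours. The remaining steps are purely algebraic.
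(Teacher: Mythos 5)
Your proof is correct and follows essentially the same route as the paper: stratify by external contours, factor via~\eqref{eq:product}, resum the interiors with Lemma~\ref{lem:internal_xi}, and repackage using~\eqref{eq:weo}. The extra care you take to justify the factorization of the \emph{matching} condition across distinct external contours (via Lemma~\ref{lem:ext-connect}) is a point the paper leaves implicit in its use of the notation $\cM_\gamma$, so your write-up is, if anything, slightly more complete.
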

\begin{proof}
  We consider the even case; the odd case is analogous. Note that by
  Lemma~\ref{lem:spo_order} (see \eqref{eq:product}) collections of
  compatible contours correspond to collections of mutually external
  contours, together with contours interior to these external
  contours. Thus by (i) grouping terms in the
  expression~\eqref{eq:Xie} of $\Xi_{\Lambda}^{\even}$ according to
  their external contours, which must be even because of the boundary
  conditions, and (ii) applying Lemma~\ref{lem:internal_xi}, we obtain
\begin{align*}
  \Xi_{\Lambda}^{\even}
  &=  {\sum_{\Gamma\subset
        \cCb(\Lambda)}}^{\!\!\!\!\!\even} w_\gamma
  \\  & =   {\sum_{\Gamma\subset
        \cCb(\Lambda)}}^{\!\!\!\!\!\even,\ext}\left( w_\gamma
  \sum_{\Gamma' \subseteq \cCb(\Lambda),\Gamma'\in\cM_{\gamma}} \prod_{\gamma'\in\Gamma'}w_{\gamma'}\right)   
    \\ &=    {\sum_{\Gamma\subset
        \cCb(\Lambda)}}^{\!\!\!\!\!\even,\ext}    \prod_{\gamma \in \Gamma} w_\gamma \Xi^{\even}_{\inte_{\even} \gamma} \Xi^{\odd}_{\inte_{\odd} \gamma} 
    \\ &=  {\sum_{\Gamma\subset
        \cCb(\Lambda)}}^{\!\!\!\!\!\even,\ext} \prod_{\gamma \in \Gamma} \left( w_\gamma\frac{\Xi^{\odd}_{\inte_{\odd} \gamma}}{\Xi^{\even}_{\inte_{\odd} \gamma}}\right) \Xi^{\even}_{\inte_{\even} \gamma} \Xi^{\even}_{\inte_{\odd} \gamma} .\qedhere
\end{align*}
\end{proof}

Lemma~\ref{lem:excon} is called the \emph{external contour
  representation}, and we can use it to derive a polymer model
formulation for $\Xi_{\Lambda}^{\even}$.
\begin{lemma}
  \label{lem:polyrep}
  Let $\Lambda$ be a finite induced subgraph of $G$ with
  $\partial\Lambda=\binf\Lam$. Then
  \begin{equation}
    \label{eq:polyrep}
    \Xi_{\Lambda}^{\even}   =  \sum_{\substack{\Gamma \subseteq
        \cCb^{\even}(\Lambda) \\compatible}} \prod_{\gamma \in \Gamma}
    \widetilde{w}_\gamma, \qquad 
        \Xi_{\Lambda}^{\odd}   =  \sum_{\substack{\Gamma \subseteq
        \cCb^{\odd}(\Lambda) \\compatible}} \prod_{\gamma \in \Gamma}
    \widetilde{w}_\gamma.
  \end{equation}
\end{lemma}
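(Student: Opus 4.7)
The plan is to prove the even case by strong induction on $|V(\Lambda)|$; the odd case is identical. Write $\Xi^{\even,\mathrm{poly}}_\Lambda$ for the right-hand side of~\eqref{eq:polyrep}. Since no pairwise-compatible contour collection (nor independent set) crosses between connected components, both $\Xi^{\even}_\Lambda$ and $\Xi^{\even,\mathrm{poly}}_\Lambda$ factor over components, so I may assume $\Lambda$ is connected. The base case in which $\Lambda$ admits no contours is immediate: only the configuration occupying every even vertex satisfies the even cycle basis boundary condition, while only the empty polymer collection contributes to $\Xi^{\even,\mathrm{poly}}_\Lambda$.

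The combinatorial heart of the argument will be a bijection
\[
  \{\text{compat.\ } \Gamma \subseteq \cCb^{\even}(\Lambda)\} \;\longleftrightarrow\; \bigsqcup_{\Gamma_{\ext}} \prod_{\gamma \in \Gamma_{\ext}} \{\text{compat.\ } \Gamma_\gamma^{\even} \subseteq \cCb^{\even}(\inte_{\even}\gamma)\} \times \{\text{compat.\ } \Gamma_\gamma^{\odd} \subseteq \cCb^{\even}(\inte_{\odd}\gamma)\},
\]
where the outer union runs over compatible, mutually external collections $\Gamma_{\ext}$ of even contours in $\cCb(\Lambda)$. In the forward direction I would take $\Gamma_{\ext}$ to be the $\prec$-maximal elements of $\Gamma$; items~1 and~3 of Lemma~\ref{lem:spo_order} then show that each remaining $\gamma' \in \Gamma$ sits inside a unique $\gamma \in \Gamma_{\ext}$, and Lemma~\ref{lem:compat-internal-odd} sorts $\gamma'$ into either $\cCb^{\even}(\inte_{\even}\gamma)$ or $\cCb^{\even}(\inte_{\odd}\gamma)$. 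The main obstacle is the inverse direction: I must verify that any tuple as above glues into a pairwise compatible family, i.e.\ that contours lying in distinct interior regions are automatically compatible. This should follow from the defining restriction of $\cCb$: any basis cycle meeting an edge of a contour $\gamma' \in \cCb(\inte_\bullet \gamma)$ is confined to $\inte_\bullet \gamma$ and therefore cannot simultaneously meet a contour in a different interior.

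Summing $\prod \widetilde{w}_{\gamma'}$ across the bijection will give
\[
  \Xi^{\even,\mathrm{poly}}_\Lambda \;=\; {\sum_{\Gamma_{\ext}\subset \cCb(\Lambda)}}^{\!\!\!\!\!\even,\ext} \prod_{\gamma \in \Gamma_{\ext}} \widetilde{w}_\gamma \; \Xi^{\even,\mathrm{poly}}_{\inte_{\even}\gamma} \; \Xi^{\even,\mathrm{poly}}_{\inte_{\odd}\gamma}.
\]
By Lemma~\ref{lem:int_ind}, each connected component of $\inte_{\even}\gamma$ and $\inte_{\odd}\gamma$ is a finite induced subgraph $\Lambda'$ with $\partial\Lambda' = \binf\Lambda'$ and $|V(\Lambda')|<|V(\Lambda)|$; combining the inductive hypothesis with multiplicativity over components then gives $\Xi^{\even,\mathrm{poly}}_{\inte_\bullet \gamma} = \Xi^{\even}_{\inte_\bullet \gamma}$ for $\bullet \in \{\even,\odd\}$. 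Substituting and comparing with Lemma~\ref{lem:excon} closes the induction.
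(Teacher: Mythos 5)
Your proof is correct and follows essentially the same route as the paper's: both rest on Lemma~\ref{lem:excon} to produce the external contour representation, Lemma~\ref{lem:compat-internal-odd} to sort interior contours, and an induction that applies the statement to $\inte_{\even}\gamma$ and $\inte_{\odd}\gamma$ (justified via Lemma~\ref{lem:int_ind}). The only cosmetic differences are your choice of $|V(\Lambda)|$ rather than $|\cCb(\Lambda)|$ as the induction variable, the explicit reduction to connected $\Lambda$, and that you spell out the gluing step (cross-compatibility of contours in distinct interiors) as a bijection, whereas the paper leaves that verification implicit.
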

Note that Lemma~\ref{lem:polyrep} achieves what has been
promised. Compatibility is a pairwise condition on contours, so the
right-hand side of~\eqref{eq:polyrep} is a polymer model partition
function. Note, however, that there is no longer a bijection between
the contour configurations contributing to $\Xi_{\Lambda}^{\even}$ and
the independent sets contributing to $Z^{\even}_{\Lambda}$.

\begin{proof}[Proof of Lemma~\ref{lem:polyrep}]
  We prove this by induction on $|\cCb(\Lambda)|$ in the even
  case. The odd case is analogous. First, suppose that
  $|\cCb(\Lambda)| = 0$.  The only possible
  $\Gamma \subseteq \cCb^{\even}(\Lambda)$ is the empty set, and the
  empty product $\prod_{\gamma \in \Gamma}$ is 1. It follows that both
  sides of the equations in~\eqref{eq:polyrep} are 1.

  For the inductive step, suppose $\Lambda$ is such that
  $|\cCb(\Lambda)|=K\geq 1$, and that~\eqref{eq:polyrep} holds for all
  $\Lambda'$ with $|\cCb(\Lambda')|<K$.  Note that for any
  $\gamma \in \cCb(\Lambda)$, this inductive hypothesis applies to
  $\Lambda' = \inte_{\even} \gamma$ and
  $\Lambda' = \inte_{\odd} \gamma$ by
  Lemma~\ref{lem:int_ind}. Combining this with Lemma~\ref{lem:excon},
  we obtain
  \begin{align*}
    \Xi_{\Lambda}^{\even}
    &=
      {\sum_{\Gamma\subset
      \cCb(\Lambda)}}^{\!\!\!\!\!\even,\ext}
      \prod_{\gamma \in \Gamma} \widetilde{w}_\gamma
      \Xi^{\even}_{\inte_{\even} \gamma} \Xi^{\even}_{\inte_{\odd} \gamma}
    \\
    &=
      {\sum_{\Gamma\subset
      \cCb(\Lambda)}}^{\!\!\!\!\!\even,\ext}
      \prod_{\gamma \in \Gamma} \widetilde{w}_\gamma
      \left(\sum_{\substack{\Gamma' \subseteq \cCb^{\even}(\inte_{\even} \gamma)
    \\ \text{compatible}}} \prod_{\gamma' \in \Gamma'} \widetilde{w}_{\gamma'}
    \right)
    \left( \sum_{\substack{\Gamma' \subseteq \cCb^{\even}(\inte_{\odd} \gamma)
    \\\text{compatible}}} \prod_{\gamma' \in \Gamma'}  \widetilde{w}_{\gamma'} \right)  .
  \end{align*}

  By Lemma~\ref{lem:compat-internal-odd}, an even contour $\gamma'$ in
  $\inte_{\even} %o
  \gamma$ is compatible with $\gamma$ if and only if it is in
  $\cCb^{\even}(\inte_{\even} \gamma)$. The same statement is true
  when replacing $\inte_{\even}\gamma$ with $\inte_{\odd}\gamma$.  We
  conclude
  \begin{equation}
    \Xi_{\Lambda}^{\even} = \sum_{\substack{\Gamma \subseteq \cCb^{\even}(\Lambda) \\ \text{compatible} }} \prod_{\gamma \in \Gamma} \widetilde{w}_\gamma
  \end{equation}
  which advances the induction.
\end{proof}

\section{Pirogov--Sinai Theory: Convergence}
\label{sec:convergence}

This section carries out the main analytic step of Pirogov--Sinai
theory, an inductive argument that controls the weights of the polymer
models from Section~\ref{sec:contour-polym-repr}.  This requires the following
strengthening of Assumption~\ref{as:1}. 

\begin{assumption}
  \label{as:2}
  $G$ is an infinite connected one-ended bipartite graph with maximum
  degree $\Delta$, and $\cB$ is a $D$-bounded cycle basis of $G$.
\end{assumption}

A crucial consequence of Assumption~\ref{as:2} is the following bound
on the number of contours of a given size. This is a key place where
we make use of having a bounded cycle basis.
\begin{lemma}
  \label{lem:enum_contours}
  Let $G$ be a graph with a $D$-bounded cycle basis $\cB$.  The number
  of contours of size $k$ containing a given edge $e$ is at most
  $(eD)^{k-1}$.
\end{lemma}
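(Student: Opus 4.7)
The strategy is the standard Peierls-style counting argument, with basis connectivity used as a substitute for ordinary graph connectivity. I will introduce an auxiliary graph that captures basis connectivity and then apply a well-known bound on the number of connected subgraphs containing a given vertex.

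First, I would define an auxiliary graph $H$ whose vertex set is $E(G)$ and in which two distinct edges $e,e' \in E(G)$ are declared adjacent if there exists a basis cycle $C \in \cB$ with $e,e' \in E(C)$. By the $D$-boundedness hypothesis~\eqref{eq:bcb}, every vertex of $H$ has degree at most $D$.

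The next step is to verify that a finite subset $E' \subseteq E(G)$ is basis connected (in the sense of Section~\ref{sec:cycle-space-cycle}) if and only if $E'$, viewed as a set of vertices of $H$, induces a connected subgraph of $H$. One direction is immediate: if $E' = E_1 \sqcup E_2$ is a nontrivial bipartition and $C \in \cB$ has nonempty intersection with both parts, then picking $e_i \in C \cap E_i$ produces an $H$-edge between $E_1$ and $E_2$. Conversely, an $H$-edge between $E_1$ and $E_2$ is witnessed by a cycle $C \in \cB$ containing one edge of each part, so $C \cap E_i \ne \emptyset$ for $i=1,2$. Since every contour is by definition basis connected, any contour of size $k$ containing $e$ is, in particular, a connected vertex subset of $H$ of size $k$ containing the vertex $e \in V(H)$.

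It therefore suffices to bound the number of connected subsets of size $k$ in $H$ that contain a given vertex. This is a classical fact: in any graph of maximum degree at most $D$, the number of connected vertex subsets of size $k$ containing a prescribed vertex is at most $(eD)^{k-1}$. This can be proved, for example, by encoding each such subset via a spanning tree and a depth-first search that records at each step which of the at most $D$ neighbors is used, then applying the standard bound $\binom{(D-1)k}{k-1} \le (eD)^{k-1}$ via $\binom{n}{r} \le (en/r)^r$; see, e.g., the tree enumeration arguments in the proof of analogous Peierls bounds. Since the paper applies this lemma only through the resulting bound on contours, no extra subtlety is needed and the estimate goes through.

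There is no serious obstacle here: the only content is the identification of basis-connectivity with connectivity in the auxiliary graph $H$, and the verification that $H$ has maximum degree at most $D$. Both follow essentially from the definitions, and the counting bound for connected subgraphs is standard.
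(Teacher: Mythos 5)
Your proposal is correct and takes essentially the same approach as the paper: both define the auxiliary graph on $E(G)$ with adjacency given by sharing a basis cycle, identify contours with connected subgraphs of this degree-$D$ graph, and then invoke the standard $(eD)^{k-1}$ bound on connected subgraphs of bounded-degree graphs containing a fixed vertex.
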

\begin{proof}
  Let $G_\cB$ be the graph whose vertices are the edges of $G$ and
  whose edge set connects edges of $G$ that are in a common basis
  cycle in $\cB$. This graph has maximum degree $D$, and a contour can
  be identified with a connected induced subgraph of $G_\cB$. The
  result now follows from well known enumerations of such subgraphs,
  see, e.g.,~\cite{BorgsChayesKahnLovasz}.
\end{proof}

\subsection{Cluster Expansion Preliminaries}
\label{sec:CEPoly}

Our main tool to gain analytic control of the polymer models will be
the cluster expansion.  The next subsection recalls the cluster expansion
convergence criteria from~\cite{kotecky1986cluster}. The subsequent
subsection then applies this criteria to derive the estimates that we
will need in the sequel.

\subsubsection{Cluster Expansion for Polymer Models}
\label{sec:cepm}
Consider the setting of polymer models as described in
Section~\ref{sec:contour-polym-repr}.  The \emph{cluster expansion} is an
infinite series representation of $\log \Xi (\cP)$. Given an ordered
multiset $X$ of polymers, the \emph{incompatibility graph} $H(X)$ of
$X$ has a vertex for every polymer and an edge between each pair of
incompatible polymers. A \emph{cluster} is an ordered multiset $X$ of
polymers from $\cP$ such that $H(X)$ is connected. If $\cX$ is the set
of all clusters from $\cP$, then as a formal power series in the
variables $w_{\gamma}$, the cluster expansion states
\begin{equation}
  \label{eq:CEformal}
  \log \Xi (\cP) = \sum_{X \in \cX} w(X) \, ,
\end{equation}
where
\begin{equation}
  \label{eq:CEterms}
  w(X) = \phi(H(X)) \prod_{\gamma \in X} w_{\gamma}, \qquad  \phi(H) = \frac{1}{|V(H)|!} \sum_{\substack{ A \subseteq E(H) \\ \text{spanning, connected} }} (-1)^{|A|} \,.
\end{equation}
The function $\phi(H)$ is the \emph{Ursell
  function}. Equation~\eqref{eq:CEformal} is only an equality of
formal power series. A sufficient condition for this equality to hold
analytically, i.e., with the right-hand side being an absolutely
convergent power series, is the following \emph{Koteck\'{y}--Preiss
  condition}.
\begin{theorem}[\cite{kotecky1986cluster}]
  \label{thmKP}
  Let $\alpha_1 \colon \cP \to [0,\infty)$
  and $\alpha_2\colon \cP \to [0,\infty)$ be two given functions and
  suppose that for all $\gamma \in \cP$,
  \begin{align}
    \label{eqKPcond1}
    \sum_{\gamma' \nsim \gamma}  |w_{\gamma'} |e^{\alpha_1(\gamma') + a_2(\gamma')}  &\le \alpha_1 (\gamma) \,.
  \end{align}
  Then the cluster expansion for the polymer model defined by
    any finite subset of polymers converges absolutely. Moreover, for
  all $\gamma \in \cP$ we have
  \begin{equation}
    \label{eqKPtail1}
    \sum_{\substack{X \in \cX :\\ \exists \gamma' \in X, \gamma' \nsim
        \gamma }}   |w(X) | \prod_{\gamma' \in X} e^{\alpha_2(\gamma')} \le
    \alpha_1(\gamma) \,. 
  \end{equation}
\end{theorem}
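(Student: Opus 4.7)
The plan is to prove the formal identity~\eqref{eq:CEformal} first, and then establish the tail bound~\eqref{eqKPtail1} by induction on cluster size; absolute convergence of~\eqref{eq:CEformal} will then follow from the tail bound applied with $\gamma^* = \gamma$ (since $\gamma \nsim \gamma$ by self-incompatibility, every cluster containing $\gamma$ is covered) by a limit. The formal identity is a standard consequence of the exponential formula: expanding $\log \Xi(\cP)$ via $\log(1+x)=\sum_{n\ge 1}(-1)^{n+1}x^n/n$ and reorganizing by grouping ordered multisets of polymers according to the connected components of their incompatibility graphs produces exactly~\eqref{eq:CEterms}. I would simply cite or sketch this computation, as it involves no analysis.

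For the analytic content, define for each polymer $\gamma^*\in \cP$ and each integer $N\ge 1$
\[
T_N(\gamma^*) \;=\; \sum_{\substack{X\in \cX,\ |X|\le N \\ \exists\, \gamma'\in X,\ \gamma'\nsim \gamma^*}} |w(X)| \prod_{\gamma''\in X} e^{\alpha_2(\gamma'')},
\]
and aim to prove $T_N(\gamma^*)\le \alpha_1(\gamma^*)$ uniformly in $N$; letting $N\to\infty$ then gives both the absolute convergence of~\eqref{eq:CEformal} for the finite polymer system and the estimate~\eqref{eqKPtail1}. The base case $N=1$ is immediate: $T_1(\gamma^*) = \sum_{\gamma_0\nsim \gamma^*}|w_{\gamma_0}|e^{\alpha_2(\gamma_0)}$, which is at most $\alpha_1(\gamma^*)$ by hypothesis~\eqref{eqKPcond1} since $e^{\alpha_1(\gamma_0)}\ge 1$.

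For the inductive step, I would fix a cluster $X$ of size $N+1$ containing some $\gamma_0\nsim \gamma^*$ and separate out $\gamma_0$ as a ``root''. The key combinatorial tool is the Penrose tree-graph inequality, which bounds the absolute value of the Ursell function $|\phi(H(X))|$ by (up to symmetry factors) a sum over spanning trees of $H(X)$. Rooting such a spanning tree at $\gamma_0$ decomposes $X\setminus\{\gamma_0\}$ into branches attached to $\gamma_0$, and summing over the possible branches converts the tree sum into an exponential. Combined with the $1/|X|!$ factor absorbing the overcounting from orderings, this yields a recursive bound of the form
\[
T_{N+1}(\gamma^*) \;\le\; \sum_{\gamma_0\nsim \gamma^*} |w_{\gamma_0}|\, e^{\alpha_2(\gamma_0)}\, \exp\bigl(T_N(\gamma_0)\bigr).
\]
Applying the inductive hypothesis $T_N(\gamma_0)\le \alpha_1(\gamma_0)$ in the exponent and then invoking~\eqref{eqKPcond1} gives $T_{N+1}(\gamma^*)\le \alpha_1(\gamma^*)$, closing the induction.

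The main obstacle is the combinatorial derivation of the recursive inequality above: making the bookkeeping work so that the symmetry factor $1/|X|!$ in $\phi(H(X))$ cancels against the number of ways to order polymers along branches of a rooted spanning tree, and so that the exponent builds up correctly to reproduce $e^{T_N(\gamma_0)}$. Everything else in the argument is essentially mechanical once that step is in place; indeed this is precisely the identity which makes Pirogov--Sinai-style inductive bounds possible and which makes the factor $e^{\alpha_1(\gamma')}$ in~\eqref{eqKPcond1} necessary.
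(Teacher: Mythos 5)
The paper does not prove Theorem~\ref{thmKP}: it is imported verbatim from~\cite{kotecky1986cluster} and used as a black box, so there is no ``paper's own proof'' to compare against. What can be said is that your approach differs from the proof in the cited reference. Koteck\'y and Preiss argue by induction on the (finite) polymer set, establishing a lower bound on the ratio $\Xi(\cP)/\Xi(\cP\setminus\{\gamma\})$ and thereby controlling the logarithmic derivative of $\Xi$; they never invoke a tree-graph inequality. You instead take the modern Dobrushin/Fern\'andez--Procacci style route: bound $|\phi(H(X))|$ by the number of spanning trees of $H(X)$ (the Penrose/Rota inequality), root the trees, and close an induction on the resulting branching sum. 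Both routes are standard; the tree-graph route is arguably cleaner and is what stronger refinements of the convergence criterion are built on, while the original argument avoids any combinatorial identity for the Ursell function.

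One small but real imprecision in your write-up: the recursion as stated,
\[
T_{N+1}(\gamma^*) \le \sum_{\gamma_0\nsim\gamma^*} |w_{\gamma_0}|\,e^{\alpha_2(\gamma_0)}\exp\bigl(T_N(\gamma_0)\bigr),
\]
is not what the tree-graph bound directly gives, and as written the induction would not close. The Penrose bound produces a recursion for the \emph{tree-majorant} quantity
\[
\widetilde T_N(\gamma^*) \;=\; \sum_{\gamma_0\nsim\gamma^*} |w_{\gamma_0}|\,e^{\alpha_2(\gamma_0)}\,A_N(\gamma_0),
\qquad
A_N(\gamma_0) \;=\; \text{(pinned rooted-tree sum over clusters of size $\le N$)},
\]
for which one shows $A_N(\gamma_0)\le\exp\bigl(\widetilde T_{N-1}(\gamma_0)\bigr)$ and then $\widetilde T_N(\gamma^*)\le\alpha_1(\gamma^*)$ by induction using~\eqref{eqKPcond1}. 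Since $T_N\le\widetilde T_N$ pointwise, the tail bound~\eqref{eqKPtail1} follows, but one cannot substitute $T_N$ for $\widetilde T_N$ in the exponent without losing the inductive structure (the inequality $T_N\le\widetilde T_N$ goes the wrong way inside $\exp$). You do flag the bookkeeping at the root-decomposition step as the crux; this is precisely where the distinction between $T_N$ and its tree-majorant matters, and a careful write-up should carry the majorant through the induction rather than $T_N$ itself. With that correction the argument is complete, including your observation that~\eqref{eqKPtail1} applied with $\gamma$ ranging over the finite polymer set yields absolute convergence of the expansion, since every cluster contains some polymer incompatible with itself.
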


\subsubsection{Cluster Expansion Convergence for Contour Models}
This section shows that Assumption~\ref{as:2} suffices to conclude the
cluster expansion converges for the specific polymer models defined in
Section~\ref{sec:contour-polym-repr}. Given this,
Proposition~\ref{prop:FV729} then summarizes estimates that follow
from this convergence.

Proposition~\ref{prop:FV729} is broadly similar
to~\cite[Theorem~7.29]{friedli2017statistical}, but as our setting
does not have a notion of translation (or translation invariance), the
division of the logarithms of partition functions into bulk and
surface contributions is formulated differently. Thus we begin by
introducing some notation.  Recall from Section~\ref{sec:cepm} that we
write $\cX=\cX(\cP)$ to denote the set of clusters associated to a
polymer model $\cP$. For $X\in\cX$ write $\overline{X}$ to denote the
support of $X$, i.e., the union of the edge sets of the polymers in
$X$. Set
\begin{equation}
  \label{eq:FV729bulk}
  Q(v) = \sum_{u\in N(v)}\mathop{\sum_{X\in \cX}}_{\{u,v\}\in
    \overline{X}} \frac{1}{|\overline{X}|} w(X). 
\end{equation}
For $\cP\subset \cC(G)$ let
$\cP_{\Lambda} = \cP\cap \overline{\cC}(\Lambda)$, and define
\begin{equation}
  \label{eq:FV729SurfaceS}
  S_{\odd}(\partial\Lambda)= \sum_{v\in \Lambda\cap V_{\odd}}\sum_{u\in N(v)}
  \sum_{X\in\cX(\cP)\setminus \cX(\cP_{\Lambda})}
  \frac{1_{\{u,v\}\in\bar X}}{|\overline{X}|} w(X).   
\end{equation} 
Then, as formal power series, we claim that 
\begin{equation}
  \label{eq:FV729Surface}
  \log \Xi (\cP_{\Lambda}) = \sum_{v\in
    \Lambda \cap V_{\odd}}Q(v) - S_{\odd}(\partial\Lambda).
\end{equation}
Verifying~\eqref{eq:FV729Surface} is a matter of applying
Theorem~\ref{thmKP} and re-arranging: since every contour in
$\cP_{\Lambda}$ contains at least one edge in~$\Lambda$,
\begin{equation}
  \label{eq:FV729Surface-odd}
      \log \Xi (\cP_{\Lambda})
  = \sum_{X\in\cX(\cP_{\Lambda})} w(X)
  = \sum_{\{u,v\}\in E(\Lambda)}
    \mathop{\sum_{X\in\cX(\cP_{\Lambda})}}_{\{u,v\}\in \overline
    X}\frac{w(X)}{|\overline X|} 
  = \sum_{v\in \Lambda\cap V_{\odd}} \sum_{u\in N(v)}
    \mathop{\sum_{X\in\cX(\cP_{\Lambda})}}_{\{u,v\}\in \overline X}\frac{w(X)}{|\overline X|},
\end{equation}
where in the last equality we have used that if
  $\{u,v\}\notin E(\Lambda)$ then $\{u,v\}\notin \overline{X}$ for any
  cluster $X\in\cX(\cP_{\Lambda})$.  The
claim~\eqref{eq:FV729Surface} then follows by rewriting the
sum over $\cX(\cP_{\Lambda})$ as the difference of the sums over
$\cX(\cP)$ and $\cX(\cP)\setminus \cX(\cP_{\Lambda})$. The same
argument shows that we can also write, with $S_{\even}$ defined by
replacing $V_{\odd}$ by $V_{\even}$ in the
formula~\eqref{eq:FV729SurfaceS},
\begin{equation}
  \label{eq:FV729Surface-even}
   \log \Xi (\cP_{\Lambda}) = \sum_{v\in
    \Lambda \cap V_{\even}}Q(v) - S_{\even}(\partial\Lambda).
\end{equation}

\begin{prop}
  \label{prop:FV729}
  Suppose $G$ satisfies Assumption~\ref{as:2}, and consider a polymer
  model with polymer set $\mathcal{P}\subset \mathcal{C}(G)$ and
  weights $\bar w\colon \mathcal{P}\to \mathbb{C}$ with $\bar w$
  depending on a real parameter $s\in \ob{a,b}$. Consider the following
  two hypotheses. First, 
  \begin{equation}
    \label{eq:FV729Hw}
    |\bar w(\gamma)| \leq e^{-\tau |\gamma|}, \qquad (s\in \ob{a,b}).
  \end{equation}
  Second, $\bar w(\gamma)$ is continuously differentiable in
  $s\in \ob{a,b}$, and there are $\tau,R>0$ such that 
  \begin{equation}
    \label{eq:FV729Hd}
    \left|\frac{\textrm{d}\bar w(\gamma)}{\textrm{d}s}\right| \leq R
    |\gamma|^{2} e^{-\tau |\gamma|}, \qquad (s\in \ob{a,b}).
  \end{equation}

  \begin{enumerate}[noitemsep]
  \item Under the hypothesis~\eqref{eq:FV729Hw}, there is a
  $\tau_{1}(D)$ such that if $\tau>\tau_{1}$, then for
  any vertex~$v$, $|Q(v)|\leq \eta(\tau)=e^{-\tau/3}$. Moreover,
  $|S_{\odd}(\partial\Lambda)|, |S_{\even}(\partial\Lambda)| \leq
  \eta |\partial\Lambda|$.
  \item If in addition~\eqref{eq:FV729Hd} holds, then there is a (possibly
  larger) $\tau_{1}(D,R)$ such that if $\tau>\tau_{1}$
  then for each vertex $v$, $Q(v)$ is continuously differentiable in
  $s\in (a,b)$, it's derivative is given by the sum of the derivatives
  of the summands of~\eqref{eq:FV729bulk}, and is at most
  $Re^{-\tau/3}$ in magnitude. Moreover, for any edge
    $\{u,v\}$ and $L\geq \min_{\gamma\in\cP}|\gamma|$
    \begin{equation}
      \label{eq:FV731}
      \left| \sum_{\substack{X \in \cX \\ \{u,v\} \in \overline{X}}}
          \frac{1}{|\ov{X}|}   w(X) 1_{|\ov{X}| \geq L} \right| \leq e^{-\tau L/2}.
    \end{equation}
  \end{enumerate}
\end{prop}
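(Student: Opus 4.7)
The plan is to apply the Koteck\'y--Preiss criterion (Theorem~\ref{thmKP}) to the polymer model $(\cP,\bar w)$, and then read off each of the claimed bounds from the resulting cluster expansion and its tail estimate. The two essential inputs are the exponential weight decay in~\eqref{eq:FV729Hw} and the contour-enumeration bound of Lemma~\ref{lem:enum_contours}, which controls the number of contours of size $k$ through a given edge by $(eD)^{k-1}$.

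First I would verify~\eqref{eqKPcond1} with $\alpha_{1}(\gamma)=\alpha_{2}(\gamma)=a|\gamma|$ for an appropriate $a>0$. Two contours $\gamma'\nsim\gamma$ must have some edge of $\gamma'$ either equal to, or lying on a common basis cycle with, an edge of $\gamma$ (Lemma~\ref{lem:comp-disjoint} and the definition of basis connectivity). Using the $D$-bounded cycle basis assumption there are at most $|\gamma|(D+1)$ such candidate edges $e'$, and at most $(eD)^{k-1}$ contours of size $k$ through each, so
\begin{equation*}
\sum_{\gamma'\nsim\gamma} |\bar w(\gamma')|e^{2a|\gamma'|}\;\leq\;|\gamma|(D+1)\sum_{k\geq 1}(eD)^{k-1}e^{-(\tau-2a)k},
\end{equation*}
which is at most $a|\gamma|$ once $\tau$ is large enough in terms of $D$. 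With KP verified,~\eqref{eqKPtail1} applied to a fixed polymer $\gamma_{0}$ (using $\gamma_{0}\nsim\gamma_{0}$ to ensure $X\ni\gamma_{0}$ is counted) gives $\sum_{X\ni\gamma_{0}}|w(X)|\leq a|\gamma_{0}|e^{-a|\gamma_{0}|}$. Since any cluster $X$ with $\{u,v\}\in\ov X$ must contain a polymer through $\{u,v\}$,
\begin{equation*}
|Q(v)|\;\leq\;\sum_{u\in N(v)}\sum_{\gamma_{0}\ni\{u,v\}}\frac{1}{|\gamma_{0}|}\sum_{X\ni\gamma_{0}}|w(X)|\;\leq\;a\Delta\sum_{k\geq 1}(eD)^{k-1}e^{-ak},
\end{equation*}
and choosing $a$ close to $\tau/2$ makes the right-hand side at most $e^{-\tau/3}$ for $\tau$ large. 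The bound on $S_{\odd}$ (and $S_{\even}$) proceeds in the same way with one extra structural observation: every cluster $X$ contributing to~\eqref{eq:FV729SurfaceS} has some polymer outside $\ov{\cC}(\Lambda)$, and basis connectivity of $\ov X$ together with Lemma~\ref{lem:bdry_basis_conn} forces $\ov X$ to meet, or be basis-adjacent to, at least one edge of $\partial\Lambda$. Reorganizing the sum by this anchor edge and applying the KP tail bound once more produces the stated bound of the form $\eta|\partial\Lambda|$.

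For part~(2), hypothesis~\eqref{eq:FV729Hd} introduces only an extra polynomial factor $R|\gamma|^{2}$ into each polymer weight derivative, which can be absorbed by shrinking $\alpha_{2}$ slightly at the cost of enlarging $\tau_{1}$ to depend on $R$. This both legitimizes term-by-term differentiation of the cluster expansion (via the Weierstrass $M$-test applied to the KP-convergent series) and reduces the derivative bound on $Q(v)$ to the same calculation as above, yielding the $Re^{-\tau/3}$ estimate. The tail bound~\eqref{eq:FV731} is obtained by repeating the $Q(v)$ argument while retaining the constraint $|\ov X|\geq L$: since each $\gamma\in X$ satisfies $|\gamma|\leq|\ov X|$, hypothesis~\eqref{eq:FV729Hw} supplies an overall factor $e^{-\tau|\ov X|}$ that can be split as $e^{-\tau L/2}\cdot e^{-\tau|\ov X|/2}$, with the second factor absorbed into the KP enumeration (by another enlargement of $\tau_{1}$) and the first factor giving the desired $e^{-\tau L/2}$. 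The main delicate point throughout is the surface estimate on $S_{\odd},S_{\even}$: one must verify that every cluster which straddles $\partial\Lambda$ can be charged cleanly to an edge of $\partial\Lambda$ without double counting, which is exactly where the combinatorial consequences of basis connectivity developed in Section~\ref{sec:order}, together with Lemma~\ref{lem:bdry_basis_conn}, are needed.
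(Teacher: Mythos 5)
Your proposal is correct and follows essentially the same route as the paper: verify the Koteck\'y--Preiss condition using the $D$-bounded cycle basis and the contour-enumeration bound of Lemma~\ref{lem:enum_contours}, then read off the $Q$-, $S$-, derivative, and tail estimates from the tail bound~\eqref{eqKPtail1}. The only cosmetic differences are your choice $\alpha_1=\alpha_2=a|\gamma|$ versus the paper's $\alpha_1(\gamma)=|\gamma|$ and $\alpha_2(\gamma)=\tfrac{2\tau}{3}|\gamma|$, and that the paper defers the details of part~(2) to \cite[Lemmas~7.29 and~7.31]{friedli2017statistical} while you sketch them directly; also, the double-counting concern you flag for the $S$ bound is a non-issue, since for an upper bound each cluster need only be assigned to a single anchor edge near $\partial\Lambda$.
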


\begin{proof}
  We start with 1. Towards applying Theorem~\ref{thmKP}, set
  $\alpha_{1}(\gamma)=|\gamma|$ and
  $\alpha_{2}(\gamma)=\frac{2\tau}{3}|\gamma|$.  For any edge $e$ of
  $G$, write $e\nsim\gamma$ if there is a basis cycle that contains
  both $e$ and an edge of $\gamma$. Note that for any contour $\gamma$
  the number of edges $e\nsim\gamma$ is at most $D|\gamma|$ since
  Assumption~\ref{as:2} gives a $D$-bounded cycle basis of
  $G$. Lemma~\ref{lem:enum_contours} implies the number of $\ell$-edge
  contours that contain a fixed edge $e$ is at most
  $(eD)^{\ell-1}$. Hence for any fixed contour $\gamma$,
  \begin{align*}
    \label{eq:KPverify}
    \mathop{\sum_{\gamma'\in\cP}}_{\gamma'\nsim \gamma}
    \overline{w}(\gamma') e^{\alpha_{1}(\gamma')+\alpha_{2}(\gamma')}
    &\leq
    \mathop{\sum_{e\in E(G)}}_{e\nsim\gamma}
      \mathop{\sum_{\gamma'\in\cP}}_{e\in \gamma'}
      e^{-\tau |\gamma'| + \alpha_{1}(\gamma') + \alpha_{2}(\gamma')}
    \\
    &\leq
      D|\gamma| \sum_{\ell=1}^{\infty} (eD)^{\ell-1} e^{-\tau
      \ell + \ell+ \frac{2\tau}{3}\ell} \\ 
    &\leq |\gamma| \sum_{\ell=1}^{\infty} (e^{-\frac{\tau}{3}}e^{2}D)^{\ell},
  \end{align*}
  which is at most $|\gamma| = \alpha_1(\gamma)$ if
  $\tau$ is large enough. This verifies the hypothesis of
  Theorem~\ref{thmKP}. The estimates on
  $Q(v)$, $S_\odd(\partial\Lambda)$ and
  $S_{\even}(\partial\Lambda)$ follow by using~\eqref{eqKPtail1}, as
  the factors of $\alpha_2(\gamma')$ give the desired
  decay. 

  For 2.\ the continuous differentiability of $Q(v)$ under~\eqref{eq:FV729Hd},
  as well as the subsequent estimates on $Q(v)$, are obtained as in the proof
  of~\cite[Lemma~7.29]{friedli2017statistical} -- these proofs rely
  only on the uniformity of the estimates on polymer weights (and
  their derivatives). Lastly, the estimate~\eqref{eq:FV731} can be
  obtained as in the proof of~\cite[Lemma~7.31]{friedli2017statistical}.
\end{proof}

\subsection{Transitive and Matched Automorphic Graphs}
\label{sec:conv_trans_autom}

In this section we prove the convergence of the cluster expansion for
the polymer model representations of $\Xi^{\even}_{\Lambda}$ and
$\Xi^{\odd}_{\Lambda}$ given by Lemma~\ref{lem:polyrep} when
  $G$ is transitive or matched automorphic. The proof is
inductive. Towards this, let $\cC^{\even}_{k}$ denote the subset of
even contours with $|\inte\gamma|\leq k$, and define
\begin{equation}
  \label{eq:Qek}
  Q^{\even}_{k}(v) = \sum_{u \in N(v)} \sum_{\substack{X \in \cX(\cC^{\even}_{k}) \\ \{u,v\} \in \overline{X}}}  \frac{1}{|\overline{X}|} \widetilde{w}(X);
\end{equation}
i.e., $Q^{\even}_{k}$ is as in~\eqref{eq:FV729bulk} with the choice $\cP =
\cC^{\even}_{k}$. Define $Q^{\odd}_{k}$ analogously.

The key step of Pirogov-Sinai theory involves controlling the ratio
$\Xi^{\odd}_{\inte_{\odd} \gamma}/\Xi^{\even}_{\inte_{\odd}
  \gamma}$. The symmetries present in the transitive and matched
automorphic setting imply that this ratio exhibits a great deal of
cancellation: volume factors depending exponentially on
$|\inte \gamma|$ cancel, leaving only surface factors.  We first
exhibit the cancellations in both cases. Note that the
  additional assumption on the cycle basis in the next two lemmas does
  not restrict the class of graphs being considered, recall
  Lemma~\ref{lem:BAI}.

\begin{lemma}
  \label{lem:volume-cancel-transitive}
  Suppose $G$ is a transitive graph with a transitive cycle basis that
  satisfies Assumption~\ref{as:2}. Then for all $v \in V_\even$ and
  $v' \in V_\odd$, $Q^\even_k(v) = Q^\odd_k(v')$. Call this common
  value $Q_{k}$. If $Q_k \leq Q$, then for any even contour~$\gamma$,
  \begin{equation}
    \left|\sum_{v \in \inte_\odd \gamma \cap V_\even} Q^\even_k(v) - \sum_{v' \in \inte_\odd \gamma \cap V_\odd} Q^\odd_k(v') \right| \leq Q |\gamma|.
  \end{equation}
  An equivalent statement holds for odd contours $\gamma$ and their
  even interiors $\inte_\even(\gamma)$.
\end{lemma}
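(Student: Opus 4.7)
My plan is to establish the equality $Q_k^\even(v) = Q_k^\odd(v')$ via a parity-swapping automorphism argument, and then to deduce the bound on the difference of the sums from Lemma~\ref{lem:b_transitive}. By Lemma~\ref{lem:BAI}, I may assume throughout that $\cB$ is automorphism-invariant.

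For the first claim, fix $v\in V_\even$ and $v'\in V_\odd$. By vertex transitivity there is an automorphism $\pi$ of $G$ with $\pi(v)=v'$, and since $G$ is bipartite, $\pi$ must interchange the two parity classes. Automorphism-invariance of $\cB$ ensures that $\pi$ sends contours to contours, exchanges even and odd contours, and maps $\inte_\odd\gamma$ onto $\inte_\even(\pi\gamma)$. In the homogeneous setting $\lam_\even=\lam_\odd=\lam$, inspection of~\eqref{eq:bbipartite} gives $w_\gamma=w_{\pi\gamma}$. I would then argue by induction on $|\inte\gamma|$ that $\widetilde{w}_\gamma=\widetilde{w}_{\pi\gamma}$: the definition~\eqref{eq:weo} involves the ratio $\Xi^\odd_{\inte_\odd\gamma}/\Xi^\even_{\inte_\odd\gamma}$, and Lemma~\ref{lem:polyrep} together with the inductive hypothesis applied to strictly smaller sub-interiors give $\Xi^\even_H=\Xi^\odd_{\pi H}$. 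Consequently $X\mapsto\pi X$ is a weight-preserving bijection between the clusters summed in $Q_k^\even(v)$ and those summed in $Q_k^\odd(v')$, so the two values coincide. Vertex-transitivity within each parity class then makes this common value independent of the chosen vertex, justifying the notation $Q_k$.

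For the quantitative bound, Step~1 immediately yields
\begin{equation*}
\sum_{v\in\inte_\odd\gamma\cap V_\even}Q_k^\even(v)-\sum_{v'\in\inte_\odd\gamma\cap V_\odd}Q_k^\odd(v')=Q_k\bigl(|\inte_\odd\gamma\cap V_\even|-|\inte_\odd\gamma\cap V_\odd|\bigr).
\end{equation*}
The parenthesized expression equals $b_\even(\gamma)+b_\odd(\gamma)=b(\gamma)$ by~\eqref{eq:bbipartite}, and Lemma~\ref{lem:b_transitive} gives $b(\gamma)=|\gamma|/\Delta$, so the hypothesis $|Q_k|\leq Q$ produces a bound of $Q|\gamma|/\Delta\leq Q|\gamma|$. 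The odd-contour case is handled by the symmetric argument, interchanging the roles of the two parity classes throughout.

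The main obstacle is the induction in Step~1: one must verify carefully that $\pi$ descends to the polymer models on the sub-interiors, in particular that $\pi$ sends the boundary-condition set~\eqref{eq:cbbc} on $H$ to the analogous set on $\pi H$, so that the identity $\Xi^\even_H=\Xi^\odd_{\pi H}$ genuinely closes the induction. This is not deep, but it is the step that uses $\pi$-invariance of $\cB$ in an essential way, and it is the one place where transitivity of the cycle basis (rather than just of $G$) enters the argument.
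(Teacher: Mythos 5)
Your proof is correct and follows essentially the same approach as the paper's: the paper simply asserts that $Q^\even_k(v)=Q^\odd_k(v')$ "follows from the defining formulas since $G$ is transitive with a transitive cycle basis," and then applies Lemma~\ref{lem:b_transitive} exactly as you do; your parity-swapping automorphism argument and induction on $|\inte\gamma|$ to show $\widetilde{w}_\gamma=\widetilde{w}_{\pi\gamma}$ are a careful unpacking of the paper's terse first sentence.
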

\begin{proof}
  That $Q^\even_k(v)=Q^\odd_k(v')$ for all $v \in V_\even$,
  $v' \in V_\odd$ follows from the defining formulas since $G$ is
  transitive with a transitive cycle basis. Call this common value
  $Q_k$.  By Lemma~\ref{lem:b_transitive},
  \begin{equation}
    \left|\sum_{v \in \inte_\odd \gamma \cap V_\even} Q^\even_k(v) -
      \sum_{v' \in \inte_\odd \gamma \cap V_\odd} Q^\odd_k(v') \right|
    =b(\gamma)\left| \ Q_k \right|
    \leq \frac{Q |\gamma|}{\Delta} \leq Q|\gamma|\, . \qedhere 
  \end{equation}
\end{proof}

\begin{lemma}
  \label{lem:volume-cancel-matched}
  Suppose a graph $G$ satisfying Assumption~\ref{as:2} has a matched
  automorphism $\pi$ such that $\cB$ is invariant under $\pi$.
  Suppose $Q$ is such that for all $v \in V_\even$,
  \begin{equation}
    |Q^\even_k(v)| \leq Q. 
  \end{equation}
  Then for any contour $\gamma$,
  \begin{equation}
    \left| \sum_{v \in \inte_\odd \gamma \cap V_\even} Q^\even_k(v) -
      \sum_{v' \in \inte_\odd \gamma \cap V_\odd} Q^\odd_k(v') \right|
    \leq Q |\gamma|. 
  \end{equation}
  The same result holds when replacing $\inte_{\odd}\gamma$ with
  $\inte_{\even}\gamma$.
\end{lemma}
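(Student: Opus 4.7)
The plan is to use the matched automorphism $\pi$ to pair up almost all vertices of $A\bydef\inte_\odd\gamma\cap V_\even$ with those of $B\bydef\inte_\odd\gamma\cap V_\odd$, localizing the discrepancy between the two sums to a small set of ``defect'' vertices living on matching edges that lie in $\gamma$.

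First I would establish that $\pi$ induces a symmetry of the polymer models underlying $Q^\even_k$ and $Q^\odd_k$. Since $\pi$ is an automorphism swapping $V_\even$ and $V_\odd$ and $\cB$ is $\pi$-invariant, $\pi$ sends each even contour $\gamma'$ to an odd contour $\pi\gamma'$ of the same size, identifies $\inte_\odd\gamma'$ with $\inte_\even\pi\gamma'$ (and vice versa), and preserves the incompatibility relation. In the homogeneous setting of Theorems~\ref{thm:transitive} and~\ref{thm:matched}, a direct comparison of the formula defining $w_{\gamma'}$ -- and then, by induction on $|\inte\gamma'|$, of the formula~\eqref{eq:weo} defining $\widetilde w_{\gamma'}$ -- gives $\widetilde w_{\pi\gamma'}=\widetilde w_{\gamma'}$. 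The induced bijection on clusters preserves weights and sends clusters meeting $v\in V_\even$ to clusters meeting $\pi(v)\in V_\odd$, so the definition~\eqref{eq:Qek} yields the key identity $Q^\even_k(v)=Q^\odd_k(\pi(v))$ for every $v\in V_\even$. In particular $|Q^\odd_k(v')|\le Q$ for all $v'\in V_\odd$.

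Applying this identity to the first sum I rewrite
\begin{equation*}
  \sum_{v\in A} Q^\even_k(v) - \sum_{v'\in B} Q^\odd_k(v') = \sum_{v\in A} Q^\odd_k(\pi(v)) - \sum_{v'\in B} Q^\odd_k(v').
\end{equation*}
Split $A=A_1\sqcup A_2$ with $A_1=\{v\in A:\pi(v)\in B\}$, and $B=B_1\sqcup B_2$ with $B_1=\{v'\in B:\pi^{-1}(v')\in A\}=\pi(A_1)$. Since $\pi$ restricts to a bijection $A_1\to B_1$, the corresponding terms cancel in pairs, and what remains is at most $Q(|A_2|+|B_2|)$.

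The main task is thus to show $|A_2|+|B_2|\le |\gamma|$. For $v\in A_2$, the matching edge $\{v,\pi(v)\}$ has one endpoint in $\inte_\odd\gamma$ and the other outside it, so it lies in $\partial\inte_\odd\gamma\subseteq\gamma$; distinct $v$'s clearly give distinct edges, so $|A_2|\le|\gamma|$. I then claim $B_2=\emptyset$: any $v'\in V_\odd\cap\inte_\odd\gamma$ lies in an odd component of $G\setminus\gamma$, in which (by definition) every $\gamma$-incident vertex is even, so $v'$ is not incident to $\gamma$. Therefore the matching edge at $v'$ stays inside the same interior odd component, forcing $\pi^{-1}(v')\in A$. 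Hence $|A_2|+|B_2|=|A_2|\le|\gamma|$. The case of $\inte_\even\gamma$ is completely symmetric with the roles of the parities swapped: now $A_2=\emptyset$ (in an even component of $G\setminus\gamma$ the $\gamma$-incident vertices are odd, so every even $v\in\inte_\even\gamma$ has its matching partner in the same component) and $|B_2|\le|\gamma|$. The one place I expect to need genuine care is the verification of the weight symmetry $\widetilde w_\gamma=\widetilde w_{\pi\gamma}$ that underpins the identity $Q^\even_k(v)=Q^\odd_k(\pi(v))$; everything else is a bookkeeping exercise using the structure of contours, bipartiteness, and the matching property of $\pi$.
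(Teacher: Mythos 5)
Your proposal is correct and takes essentially the same approach as the paper: use the matching $\{v,\pi(v)\}$ and the identity $Q^\even_k(v)=Q^\odd_k(\pi(v))$ (a consequence of $\pi$-invariance of $\cB$ and of the weights) to cancel paired terms, and bound the remaining discrepancy by the number of matching edges that cross $\partial\inte_\odd\gamma\subseteq\gamma$. Two minor remarks. First, you note a refinement the paper does not bother with: by the parity structure of odd components, every odd $v'\in\inte_\odd\gamma$ is non-incident to $\gamma$, so $B_2=\emptyset$ and only $A_2$ contributes; the paper simply bounds both types of defect pairs by $|M'|\le|\gamma|$, which gives the same final bound. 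Second, you rightly flag that $Q^\even_k(v)=Q^\odd_k(\pi(v))$ rests on the $\pi$-invariance of $\widetilde w$, which needs a short induction on $|\inte\gamma'|$ via the defining formula~\eqref{eq:weo}; the paper states this identity in one line but the justification you sketch is the right one and is where the $\pi$-invariance of $\cB$ is actually used.
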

\begin{proof}
  For any $v \in V_\even$, because $\cB$ is invariant under $\pi$,
  $Q^\even_k(v) = Q^\odd_k(\pi(v))$.  For all $v \in V_\even$ such
  that $v$ or $\pi(v)$ is in $\inte_\odd\gamma$, consider the matching
  $M$ given by all of the $\{v, \pi(v)\}$ pairs. Let $M' \subseteq M$
  be the pairs $\{v, \pi(v)\}$ where exactly one of $v$ or $\pi(v)$ is
  in $\inte_\odd\gamma$.  For all edges in $M \setminus M'$ the terms
  for $v$ and $\pi(v)$ in the following equation cancel, so
  \begin{align*}
    \left|\sum_{v \in \inte_\odd \gamma \cap V_\even} Q^\even_k(v) - \sum_{v' \in \inte_\odd \gamma \cap V_\odd} Q^\odd_k(v') \right|
    & = \left|\sum_{\{v, \pi(v)\} \in M'} Q^\even_k(v) \right| \leq Q |\gamma|,
  \end{align*}
  where the final inequality follows because $M' \subseteq \gamma$.
\end{proof}

\begin{lemma}
  \label{lem:lambda_bound}
  Let $G$ be a transitive or matched automorphic graph satisfying
  Assumption~\ref{as:2}, and suppose the cycle basis is automorphism
  invariant.  There is a $\lambda_{\star}(D,\Delta)$ such that if
  $\lambda\geq\lambda_{\star}$, then for any contour $\gamma$,
  \begin{equation}
    |\widetilde{w}_\gamma | \leq \left(e^{3} \lambda^{-1/\Delta} \right)^{|\gamma|}.
  \end{equation}
\end{lemma}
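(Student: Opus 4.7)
The plan is to proceed by strong induction on $n = |\inte\gamma|$, parametrising $\tau = (\log\lambda)/\Delta - 3$ so that the desired bound reads $|\widetilde{w}_\gamma| \le e^{-\tau|\gamma|}$, and choosing $\lambda_\star$ large enough that $\tau$ exceeds the convergence threshold $\tau_1(D)$ of Proposition~\ref{prop:FV729}. By Lemma~\ref{lem:BAI} I will assume $\cB$ is automorphism invariant, which matches the hypotheses of Lemmas~\ref{lem:volume-cancel-transitive} and~\ref{lem:volume-cancel-matched}. The inequalities of Lemmas~\ref{lem:b_transitive} and~\ref{lem:b_matched} give $|w_\gamma| \le \lambda^{-|\gamma|/\Delta}$ in both the transitive and matched automorphic settings, so if $\gamma$ is (say) even the whole task reduces to bounding
\begin{equation*}
\bigl|\log\bigl(\Xi^{\odd}_{\inte_\odd\gamma}/\Xi^{\even}_{\inte_\odd\gamma}\bigr)\bigr| \le 3|\gamma|.
\end{equation*}

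For the inductive step I would view each of $\Xi^{\even}_{\inte_\odd\gamma}$ and $\Xi^{\odd}_{\inte_\odd\gamma}$ as polymer partition functions via Lemma~\ref{lem:polyrep}. Any polymer $\gamma'\in\overline{\cC}(\inte_\odd\gamma)$ is compatible with $\gamma$ (any basis cycle through an edge of $\gamma$ exits $\inte_\odd\gamma$ and hence is edge-disjoint from $\gamma'$), so Lemma~\ref{lem:order} gives $\gamma'\prec\gamma$ and Lemma~\ref{lem:spo_order}(2) gives $|\inte\gamma'| < n$. The inductive hypothesis therefore supplies the Peierls bound $|\widetilde{w}_{\gamma'}| \le e^{-\tau|\gamma'|}$ for every polymer in the two models. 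I can thus apply Proposition~\ref{prop:FV729}(1) to the polymer sets $\cC^{\even}_{n-1}$ and $\cC^{\odd}_{n-1}$: the cluster expansions converge and yield $|Q^{\even}_{n-1}(v)|, |Q^{\odd}_{n-1}(v)|\le \eta := e^{-\tau/3}$ at every vertex, together with $|S^{\even}_{\even}(\partial(\inte_\odd\gamma))|, |S^{\odd}_{\odd}(\partial(\inte_\odd\gamma))|\le \eta|\partial(\inte_\odd\gamma)| \le \eta|\gamma|$; the final inequality uses that any edge leaving $\inte_\odd\gamma$ joins two distinct components of $G\setminus\gamma$ and so must belong to $\gamma$.

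Now I would subtract the two presentations in~\eqref{eq:FV729Surface-even} (applied to $\log\Xi^{\even}_{\inte_\odd\gamma}$ summed over $V_\even$) and~\eqref{eq:FV729Surface} (applied to $\log\Xi^{\odd}_{\inte_\odd\gamma}$ summed over $V_\odd$). The apparently $O(n)$ bulk terms collapse to $O(|\gamma|)$ by precisely the cancellation in Lemma~\ref{lem:volume-cancel-transitive} or~\ref{lem:volume-cancel-matched}; combined with the $2\eta|\gamma|$ surface bound, this gives $|\log(\Xi^{\odd}/\Xi^{\even})| \le 3\eta|\gamma| \le 3|\gamma|$ since $\eta < 1$. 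Multiplying by $|w_\gamma|$ then advances the induction; the base case is vacuous, since once $n$ is small enough that $\overline{\cC}(\inte_\odd\gamma) = \emptyset$ both partition functions equal $1$.

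The main delicate point is the coordinated pairing of parities in the two bulk expansions: summing $\log\Xi^{\even}$ over $V_\even$ and $\log\Xi^{\odd}$ over $V_\odd$ is exactly what makes the cancellation lemmas applicable. This cancellation, collapsing $O(|\inte\gamma|)$ bulk free-energy differences to a genuine surface quantity $O(|\gamma|)$, is the substantive content of Pirogov--Sinai beyond a naive Peierls argument, and is the single place where the symmetry hypotheses on $G$ are actually used.
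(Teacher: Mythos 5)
Your proposal follows essentially the same route as the paper's own proof: strong induction on $|\inte\gamma|$, a trivial base case where the interior contains no contours so both partition functions equal $1$, and an inductive step that feeds the induction hypothesis into Proposition~\ref{prop:FV729}, applies~\eqref{eq:FV729Surface} and~\eqref{eq:FV729Surface-even} with the coordinated parity pairing, and then invokes Lemma~\ref{lem:volume-cancel-transitive} or~\ref{lem:volume-cancel-matched} to collapse the bulk terms and the surface estimate to obtain $|\log(\Xi^{\odd}_{\inte_\odd\gamma}/\Xi^{\even}_{\inte_\odd\gamma})| \le 3|\gamma|$. The only cosmetic difference is that the paper phrases its base case via ``thin'' contours (those whose even and odd interiors both contain no contours), while you correctly observe that for (say) an even $\gamma$ only the emptiness of $\overline{\cC}(\inte_\odd\gamma)$ is needed.
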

\begin{proof}
  The proof is essentially the same for the vertex transitive and
  matched automorphic cases, so we consider both cases simultaneously.
  A contour $\gamma$ is \emph{thin} if
  $\cCb(\inte_{\even} \gamma)\cup\cCb(\inte_{\odd}\gamma)=\emptyset$.
  If $\gamma$ is thin, then $\Xi^\even_{\inte_{\odd} \gamma}$ and
  $\Xi^\odd_{\inte_{\odd} \gamma}$ are both equal to $1$. By
  Lemma~\ref{lem:b_transitive} (respectively
  Lemma~\ref{lem:b_matched}),
  \begin{equation}
    \widetilde{w}_\gamma = w_\gamma =
    \lambda^{-b(\gamma)} \leq \lambda^{-|\gamma|/\Delta}
    < \left(e^{3} \lambda^{-1/\Delta}  \right)^{|\gamma|}.
\end{equation}

We now proceed by induction on $|\inte \gamma|$. When
$|\inte \gamma| = 1$ the contour is necessarily thin (by
Lemma~\ref{lem:spo_order}) and the above argument applies. Let
$\gamma$ be a contour with $|\inte \gamma| = k+1$.  Without loss of
generality, assume $\gamma$ is even. Make the induction hypothesis
that for all contours $\gamma'$ with $|\inte \gamma'| \leq k$, 
\begin{equation}
  \widetilde{w}_{\gamma'} \leq \left(e^{3}\lambda^{-1/\Delta} \right)^{|\gamma'|}. 
\end{equation}

To advance the induction we will use Proposition~\ref{prop:FV729} with
$\cP = \cC^{\even}_{k}$ (respectively $\cC^{\odd}_{k}$) and
$\Lambda = \inte_{\odd}\gamma$. Note that all contours in
  $\cP_{\Lambda}$ satisfy the induction hypothesis by
  Lemma~\ref{lem:spo_order}. Using the
formulas~\eqref{eq:FV729Surface} and~\eqref{eq:FV729Surface-even} for
$\log \Xi^{\odd}_{\inte_{\odd}\gamma}$ and
$\log \Xi^{\even}_{\inte_{\odd}\gamma}$ yields
\begin{equation}
  \label{eq:XiDiffTM}
  |\log \Xi^{\odd}_{\inte_{\odd}\gamma}-\log
  \Xi^{\even}_{\inte_{\odd}\gamma}| =
  |\sum_{v\in V_{\odd}\cap \inte_{\odd}\gamma} Q^{\odd}_{k}(v) -
  S_{\odd}(\gamma)
  - \sum_{v\in V_{\even}\cap \inte_{\odd}\gamma} Q^{\even}_{k}(v) +
  S_{\even}(\gamma)|,
\end{equation}
and each term appearing in these formula is defined by a convergent
series by Proposition~\ref{prop:FV729}, provided
  $\lam\geq \lam_{\star}(D,\Delta)$ has been taken large
  enough. Hence by Lemma~\ref{lem:volume-cancel-transitive}
(respectively Lemma~\ref{lem:volume-cancel-matched}) combined with
Proposition~\ref{prop:FV729}, 
\begin{equation}
  \label{eq:XiDiffTM2}
  |\log \Xi^{\odd}_{\inte_{\odd}\gamma}-\log
  \Xi^{\even}_{\inte_{\odd}\gamma}| \leq \eta |\gamma| + 2|\gamma|,
\end{equation}
for some $\eta<1$ (after potentially increasing $\lam_{\star}$). Hence
\begin{equation}
  |\widetilde{w}_\gamma | = \left| \lambda^{-b(\gamma)}
    \frac{\Xi^\odd_{\inte_{\odd} \gamma}}{\Xi^\even_{\inte_{\odd}
        \gamma}}\right| \leq \lam^{-b(\gamma)}e^{3|\gamma|}\leq
  (e^{3}\lam^{-\frac{1}{\Delta}})^{|\gamma|},
\end{equation}
where we have used Lemma~\ref{lem:b_transitive} (respectively
Lemma~\ref{lem:b_matched}) to estimate
$b(\gamma)$. For $\gamma$ odd the same argument applies (up to
notational changes). This completes the proof.
\end{proof}

We can now summarize the main result of this section, which follows
from Lemmas~\ref{lem:lambda_bound} and~\ref{lem:BAI}.
\begin{prop}
  \label{prop:TMAce}
  Let $G$ be a graph satisfying Assumption~\ref{as:2}, and suppose $G$
  is transitive or matched automorphic.  There is a
  $\lam_{\star}(D,\Delta)$ such that if $\lam\geq\lam_{\star}$, then
  the cluster expansions for $\log \Xi_{\Lambda}^{\even}$ and
  $\log \Xi_{\Lambda}^{\odd}$ converge. Moreover,
  $\widetilde w_{\gamma}\leq
  (e^{3}\lam^{-\frac{1}{\Delta}})^{|\gamma|} $ for all contours
  $\gamma$.
\end{prop}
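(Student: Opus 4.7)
The proof is essentially a packaging of the two cited lemmas together with the cluster expansion machinery of Section~\ref{sec:CEPoly}. First I invoke Lemma~\ref{lem:BAI} to assume without loss of generality that $\cB$ is automorphism invariant. This may enlarge the cycle basis bound from $D$ to some $D' = D'(D,\Delta)$, but any threshold $\lambda_\star$ we eventually produce will still depend only on $D$ and $\Delta$. In the vertex transitive case, automorphism invariance of $\cB$ is more than enough to make $\cB$ a transitive cycle basis as required by Lemma~\ref{lem:volume-cancel-transitive}. In the matched automorphic case, $\cB$ is in particular invariant under the distinguished matched automorphism $\pi$, which is what Lemma~\ref{lem:volume-cancel-matched} needs. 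Hence Lemma~\ref{lem:lambda_bound} applies and, for $\lambda \geq \lambda_\star(D,\Delta)$ sufficiently large, yields the weight bound $|\widetilde{w}_\gamma| \leq (e^3 \lambda^{-1/\Delta})^{|\gamma|}$ for every contour $\gamma$. This is already the second assertion of the proposition.

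Next I would use this weight bound to feed the abstract convergence machinery. Setting $\tau := \tfrac{1}{\Delta}\log\lambda - 3$, the weight bound reads $|\widetilde{w}_\gamma| \leq e^{-\tau |\gamma|}$, which is exactly hypothesis~\eqref{eq:FV729Hw} of Proposition~\ref{prop:FV729}(1). I apply that proposition with polymer set $\cP = \cCb^{\even}(\Lambda)$ and weights $\widetilde{w}$; by Lemma~\ref{lem:polyrep}, this is the polymer model whose partition function equals $\Xi_{\Lambda}^{\even}$. Enlarging $\lambda_\star(D,\Delta)$ further so that $\tau > \tau_1(D')$, the Kotecký--Preiss criterion verified inside the proof of Proposition~\ref{prop:FV729} gives absolute convergence of the cluster expansion for $\log \Xi_{\Lambda}^{\even}$. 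The same argument with $\cP = \cCb^{\odd}(\Lambda)$ handles $\log \Xi_{\Lambda}^{\odd}$.

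I do not anticipate any genuine obstacle here: the hard work is packed into Lemma~\ref{lem:lambda_bound}, where a delicate induction on $|\inte \gamma|$ together with the cancellation lemmas (Lemma~\ref{lem:volume-cancel-transitive} or~\ref{lem:volume-cancel-matched}) controls the ratio $\Xi^{\odd}_{\inte_{\odd}\gamma}/\Xi^{\even}_{\inte_{\odd}\gamma}$ by a pure surface term despite its appearance as a ratio of partition functions whose logarithms grow with the volume. The only mild bookkeeping point is to confirm that the $\lambda_\star$ we output depends only on $(D,\Delta)$ and not on $\Lambda$ or on the scale of the induction; this is automatic because both the bound extracted from Lemma~\ref{lem:lambda_bound} and the threshold $\tau_1$ in Proposition~\ref{prop:FV729} are uniform in $\Lambda$ and depend only on $D$ (hence, after Lemma~\ref{lem:BAI}, only on $D$ and $\Delta$).
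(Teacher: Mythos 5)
Your proof is correct and follows the same route as the paper: invoke Lemma~\ref{lem:BAI} to pass to an automorphism-invariant bounded cycle basis, apply Lemma~\ref{lem:lambda_bound} for the contour weight bound, and then feed that bound into Proposition~\ref{prop:FV729} via the polymer representation of Lemma~\ref{lem:polyrep}. The paper's own proof is just the one-line remark that the result follows from Lemmas~\ref{lem:lambda_bound} and~\ref{lem:BAI}; you have merely spelled out the intermediate bookkeeping (the choice $\tau = \tfrac{1}{\Delta}\log\lambda - 3$ and the tracking of $D \to D'$), which is accurate.
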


\subsection{Vertex Transitive within a Class}
\label{sec:convg_trans_within_class}

In this section we will make some further assumptions on $G$. 
\begin{assumption}
  \label{as:3}
  Assume $G$ is infinite, one-ended, bipartite, vertex transitive
  within each parity class, has a $D$-bounded cycle basis $\cB$, and
  has at most polynomial volume growth.
\end{assumption}
Note that under Assumption~\ref{as:3}, Lemma~\ref{lem:iso} implies
$\Phi_{G}(t)\geq \Ciso t^{-1/2}$ for some $\Ciso>0$. Applying this to
$\inte\gamma$ for any contour $\gamma$ yields the isoperimetric
inequality
\begin{equation}
  \label{eq:bireg-iso}
  |\gamma| \geq \Ciso |\inte\gamma|^{\frac12}
\end{equation}
which will be crucial. Recall that in the setting of
Assumption~\ref{as:3} we have activities $\lam_{\even}$ and
$\lam_{\odd}$ on even and odd vertices, and we parametrize
$\lam_{\odd}$ by introducing $\proplam$ and setting
\begin{equation}
  \label{eq:lamolam}
  \lam_{\odd} = \proplam
  \lam_{\even}^{\frac{\Delta_{\odd}}{\Delta_{\even}}}, \qquad \proplam \in
  \left(\frac12, 2 \right).
\end{equation}
We write $\cU$ for the set of $\lam_{\odd}$ arising
in this parametrization. The important property of the choice
$\proplam\in\left(\frac12,2\right)$ is that this bounded interval
contains $\proplam=1$. 

Unlike in Section~\ref{sec:conv_trans_autom}, there is no \emph{a
  priori} cancellation of volume factors in ratios
$\Xi^{\even}_{\Lam} / \Xi^{\odd}_{\Lam}$ of partition
functions. Instead we control this ratio by an inductive argument
using truncated weights.  This strategy, originally due to
Zahdradn\'{i}k~\cite{Zahradnik}, is now standard. In particular, it
has been clearly exposited
in~\cite[Chapter~7]{friedli2017statistical}, and we follow this
reference closely. Before giving some intuition for the approach, several
definitions are needed.

Define a contour $\gamma$ to be of \emph{class $n\geq 1$} if
$|\inte\gamma|=n$. We write $\cC_{n}$ for the subset of contours of
class $n$, $\cC^{x}_{n}\subset \cC_{n}$ for those of type
$x\in \{\even,\odd\}$, and define
$\cC_{\leq n}=\cup_{k=1}^{n}\cC_{k}$,
$\cC^{x}_{\leq n}=\cup_{k=1}^{n}\cC^{x}_{k}$.  For
$\gamma\in\cC^{x}_{1}$ of class one, the smallest possible class,
define the truncated weight $\widehat w_{\gamma}$ by
\begin{equation}
  \label{eq:tw1}
  \widehat w_{\gamma}=\widetilde w_{\gamma} = \lam_{x}^{-1},
\end{equation}
where the second equality holds as $\widetilde w_{\gamma}=w_{\gamma} =
\lam_{x}^{-|\gamma|/\Delta_{x}} = \lam_{x}^{-1}$.

To advance the definition of truncated weights to contours of classes
larger than one, introduce a cutoff parameter
\begin{equation}
  \label{eq:cutoff}
  \kappa = \frac{ \Ciso \log\lam_{\even}}{8\Delta_{\even}}, 
\end{equation}
and let $\chi\colon\R\to [0,1]$ be a $C^{1}$ cutoff function
satisfying $\chi(s)=1$ if $s\leq \kappa$ and $\chi(s)=0$ if
$s\geq 2\kappa$.  We will use below that $\norm{\chi'}_{\infty} < \infty$ since $\chi '$ is continuous and compactly supported. Suppose that truncated weights $\widehat w_{\gamma}$
have been defined for all contours $\gamma\in \cC_{\leq n}$. Given
this, recall $\widehat{w}(X)$ from~\eqref{eq:CEterms}, let $v\in V$,
and introduce the truncated free energies
\begin{equation}
  \label{eq:cpressure}
  \widehat \psi^{x}_{n} = q^{x} + \frac{\widehat
    Q^{x}_{n}}{\Delta_{x}},
  \qquad
  q^{x} = \frac{\log \lam_{x}}{\Delta_{x}}, \qquad \widehat
  Q^{x}_{n}(v) = \sum_{u\in N(v)} \mathop{\sum_{X\in\cX(\cC^{x}_{\leq
        n})}}_{\{u,v\}\in \overline X}\frac{1}{|\overline X|} \widehat{w}(X).
\end{equation}

Since $G$ is vertex transitive within a class, note that
$\widehat{Q}^{x}_{n}(v)$ is in fact independent of $v\in V$, and hence so is
$\widehat\psi^{x}_{n}$. We will shortly justify that $\widehat{Q}^{x}_{n}$ is in
fact finite, so that $\widehat\psi^{x}_{n}$ exists. Temporarily
granting this, we then define the truncated weights of contours of
class $n+1$ by 
\begin{equation}
  \label{eq:tw2}
  \widehat w_{\gamma} =
  \begin{cases}
    w_\gamma \chi\ob{
      \left( \widehat\psi^\odd_n - \widehat\psi^\even_n \right)
      \Delta_{\odd}|\inte_{\odd}\gamma\cap V_{\odd}|^{1/2}} 
      \frac{\Xi^\odd_{\inte_\odd \gamma}}{\Xi^\even_{\inte_\odd \gamma}}
      & \gamma\in \cC^{\even}_{n+1} \\
    w_\gamma \chi\ob{
      \left( \widehat\psi^\even_n - \widehat\psi^\odd_n \right)
      \Delta_{\even}|\inte_{\even}\gamma \cap V_{\even}|^{1/2}}
    \frac{\Xi^\even_{\inte_\even \gamma}}{\Xi^\odd_{\inte_\even \gamma}}
    & \gamma\in \cC^{\odd}_{n+1}
  \end{cases}.
\end{equation}

The next lemma partly explains the preceding definitions. The
definition of the truncated weights allows the hypothesis of the lemma
to be verified.
\begin{lemma}
  \label{lem:truncpress}
  Suppose the weights $\widehat w_{\gamma}$ satisfy~\eqref{eq:FV729Hw}
  for $\gamma\in \cC_{n}$, with $\tau\geq \tau_{1}$ from
  Proposition~\ref{prop:FV729}. Then for $x\in \{\even,\odd\}$,
  \begin{equation}
    \label{eq:truncpress}
    \widehat\psi^{x}_{n} = \lim_{k\to\infty} \frac{1}{|E(B_{k}(v))|}
    \log \lam_{x}^{|V_{x}\cap V(B_{k}(v))|} \widehat{\Xi}_{B_{k}(v)}^{x,n}
  \end{equation}
  where $\widehat{\Xi}_{\Lam}^{x,n}$ is the partition function with
  polymer set $\overline{\cC}^{x}_{\leq n}(\Lam)$.  In particular,
  $\widehat\psi^{x}_{n}-q^{x}\geq 0$.
\end{lemma}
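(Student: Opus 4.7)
The plan is to apply the bulk-plus-surface cluster expansion identity from Proposition~\ref{prop:FV729} to the polymer model with polymer set $\cP = \cC^{x}_{\leq n}$ and truncated weights $\widehat w$, then divide by $|E(B_{k}(v))|$ and pass to the limit using polynomial volume growth. The hypothesized bound (\ref{eq:FV729Hw}) with $\tau \geq \tau_{1}$ ensures the cluster expansion converges and Proposition~\ref{prop:FV729} applies.

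First, I would apply (\ref{eq:FV729Surface}) (or the equivalent even-vertex version (\ref{eq:FV729Surface-even})) with $\Lambda = B_{k}(v)$ and $\cP = \cC^{x}_{\leq n}$ to write, for either parity $y \in \{\even,\odd\}$,
\[
\log \widehat\Xi^{x,n}_{B_{k}(v)} = \sum_{u \in B_{k}(v) \cap V_{y}} \widehat Q^{x}_{n}(u) - S_{y}(\partial B_{k}(v)),
\]
where $S_{y}$ is as in (\ref{eq:FV729SurfaceS}) for this polymer model. Assumption~\ref{as:3} makes $G$ vertex transitive within each parity class, and because the polymer set and the truncated weights are inductively invariant under such automorphisms, $\widehat Q^{x}_{n}(u)$ depends on $u$ only through its parity; let $u_{y}$ denote any vertex of parity $y$. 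Adding $|V_{x} \cap B_{k}(v)| \log \lam_{x}$ and dividing by $|E(B_{k}(v))|$ yields
\[
\frac{\log\bigl(\lam_{x}^{|V_{x}\cap B_{k}(v)|}\, \widehat\Xi^{x,n}_{B_{k}(v)}\bigr)}{|E(B_{k}(v))|} = \frac{|V_{x} \cap B_{k}(v)|}{|E(B_{k}(v))|}\,\log\lam_{x} + \frac{|V_{y} \cap B_{k}(v)|}{|E(B_{k}(v))|}\,\widehat Q^{x}_{n}(u_{y}) - \frac{S_{y}(\partial B_{k}(v))}{|E(B_{k}(v))|}.
\]

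Because every edge of a bipartite graph has exactly one endpoint of each parity, $|E(B_{k}(v))| = \Delta_{y}|V_{y} \cap B_{k}(v)| + O(|\partial B_{k}(v)|)$, so $|V_{y}\cap B_{k}(v)|/|E(B_{k}(v))| \to 1/\Delta_{y}$. Lemma~\ref{lem:vgrowth} (polynomial volume growth with quasi-transitivity) gives $|\partial B_{k}(v)|/|E(B_{k}(v))|\to 0$, and Proposition~\ref{prop:FV729} controls $|S_{y}(\partial B_{k}(v))| \leq \eta |\partial B_{k}(v)|$, so the surface term vanishes. The first two summands tend to $q^{x}$ and $\widehat Q^{x}_{n}(u_{y})/\Delta_{y}$. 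Since the overall limit cannot depend on $y$, this forces $\widehat Q^{x}_{n}(u_{\even})/\Delta_{\even} = \widehat Q^{x}_{n}(u_{\odd})/\Delta_{\odd}$, and the common value is the $\widehat Q^{x}_{n}/\Delta_{x}$ appearing in (\ref{eq:cpressure}), yielding (\ref{eq:truncpress}).

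For the final inequality $\widehat \psi^{x}_{n} - q^{x} \geq 0$, I would argue inductively on $n$ that every truncated weight is non-negative: $\chi\in[0,1]$, $w_{\gamma}>0$ for real positive activities, and $\Xi^{\odd}_{\inte\gamma}/\Xi^{\even}_{\inte\gamma}$ is a ratio of polymer partition functions each proportional to a positive hard-core partition function by Proposition~\ref{prop:contour_partfn}, hence positive. Therefore $\widehat\Xi^{x,n}_{B_{k}(v)}\geq 1$ from the empty configuration, so $\log\widehat\Xi^{x,n}_{B_{k}(v)}\geq 0$, and (\ref{eq:truncpress}) gives the conclusion. The main subtlety I anticipate is the bookkeeping between the contour type $x$ and the parity $y$ of the summation vertex: the polymer model only respects within-class transitivity, and it is precisely the parity-independence of the log-partition-function identity that forces $\widehat Q^{x}_{n}(u)/\Delta_{(\text{parity of }u)}$ to be a single intensive quantity, making the definition (\ref{eq:cpressure}) well-posed and the limit in (\ref{eq:truncpress}) independent of the base vertex $v$.
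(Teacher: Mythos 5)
Your proof is correct and takes essentially the same route as the paper: apply the bulk-plus-surface decomposition~\eqref{eq:FV729Surface}/\eqref{eq:FV729Surface-even} with $\Lambda=B_k(v)$ and $\cP=\cC^x_{\le n}$, kill the surface term via Lemma~\ref{lem:vgrowth} and Proposition~\ref{prop:FV729}, use the constancy of $\widehat Q^x_n$ within a parity class, and obtain $\widehat\Xi^{x,n}_{B_k(v)}\ge 1$ from nonnegativity of the truncated weights. Your explicit accounting of the parity dependence of $\widehat Q^x_n(u)$ (concluding that $\widehat Q^x_n(u)/\Delta_{\text{parity}(u)}$ is the well-defined intensive quantity, since the same log-partition-function can be expressed as a sum over either parity class) is slightly more careful than the paper's terse remark, but it is the same argument.
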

\begin{proof}
  We first prove~\eqref{eq:truncpress} by
  using~\eqref{eq:FV729Surface-even} and
  Proposition~\ref{prop:FV729}. By Lemma~\ref{lem:vgrowth} the ratio
  of the boundary term to $|E(B_{k}(v))|$ vanishes as
  $k\to\infty$. The claim then follows since $Q(v)$ is independent of
  $v$ since the limiting ratio of $|V_{x}\cap V(B_{k}(v))|$ to
  $|E(B_{k}(v))|$ is $\Delta_{x}^{-1}$. The final claim
  $\widehat\psi^{x}_{n}-q^{x}\geq 0$ follows since
  $\widehat{\Xi}_{B_{k}(v)}^{x,n}\geq 1$.
\end{proof}

To gain some intuition, recall that
$\lam_{\odd} = \proplam \lam_{\even}^{\Delta_{\odd}/\Delta_{\even}}$,
and note that by Lemma~\ref{lem:VTPC},
\begin{equation}
  \label{eq:vtpc-2}
  \widetilde w_{\gamma} = \lam_{\even}^{-|\gamma|/\Delta_{\even}}
  \frac{\proplam^{|\inte_{\odd}\gamma\cap
      V_{\odd}|}\Xi^{\odd}_{\inte_{\odd}\gamma}}{\Xi^{\even}_{\inte_{\odd}\gamma}},
  \qquad \text{$\gamma$ even}.
\end{equation}
A similar expression can be written for odd contours. The first term
in~\eqref{eq:vtpc-2} provides decay in the size of $\gamma$, but it is
possible that the ratio in~\eqref{eq:vtpc-2} overwhelms this, making
$\widetilde w_{\gamma}$ large. If this occurs, it indicates that the
system does not want to be in the even phase, as even contours
represent deviations from the even phase. Unfortunately it is not
possible to make this precise by using a cluster expansion argument,
as the cluster expansion relies on weights being small.

Truncating weights allows for the cluster expansion to be
applied. Intuitively, the preceding paragraph suggests that truncation
should only change the weights of at most one type (even or odd) of
contour. This intuition is self-consistent: assuming the odd phase is
dominant, we expect that large even contours are rare, as this would
require the existence of a large even-occupied region. Such a contour
would necessarily live inside of a large odd contour, which is itself
rare, and this is captured by the truncated weights. Truncation should
thus have little effect; in particular the weights of at least one
type of contour should be unchanged. This ultimately enables the
determination of the phase diagram.

For a succinct summary of how this strategy can be carried out in the
context of spin systems on $\mathbb{Z}^{d}$
see~\cite{kotecky2006pirogov}, and for a more detailed discussion,
see~\cite[Chapter~7]{friedli2017statistical}. In the present context
the main technical conclusion is the following proposition.  Set
$\widehat\psi_{n} = \max \{
\widehat\psi^{\even}_{n},\widehat\psi^{\odd}_{n}\}$, and set
\begin{equation}
  \label{eq:adefinition}
  a^{x}_{n} = \widehat\psi_{n} - \widehat\psi^{x}_{n}, \qquad x\in \{\even,\odd\}.
\end{equation}
Note that at least one of $a^{\even}_{n}$ and $a^{\odd}_{n}$ is
zero. Moreover, for all $\gamma \in \cC^{\even}_{n+1}$,
$a_n^{\even} \Delta_{\odd} | \inte\gamma|^{1/2} \leq
\kappa$ implies $\widehat{w}_\gamma = \widetilde{w}_\gamma$, and
analogously for odd contours.  
Recall that
$\Delta = \max \{ \Delta_{\even},\Delta_{\odd}\}$, and recall $\Ciso$
from~\eqref{eq:bireg-iso}.
\begin{prop}
  \label{prop:FV734}
  Suppose $G$ satisfies Assumption~\ref{as:3}.
   For any $\tau$ sufficiently large, there exists $\lambda_\star = \lambda_\star(\Ciso, D, \Delta,\tau)$ and an increasing
  sequence $(c_{n})_{n\geq 0}$ with
  $2\leq c_{n}\uparrow c_\infty \leq 3$ so that for  $\lambda_\even > \lambda_\star$, $\lam_{\odd}\in\cU$, and all $n \geq 1$, the following statements hold: 
  \begin{enumerate}
  \item For $\gamma\in\cC_{\leq n}$,
    \begin{equation}
      \label{eq:weightbound}
      \widehat{w}_\gamma \leq e^{-\tau |\gamma|}
    \end{equation}
    and if $\gamma\in\cC_{\leq n}^{x}$, $x\in \{\even,\odd\}$,
    \begin{equation}
      \label{eq:truncequal}
      a^x_n \Delta_x|\inte\gamma|^{1/2} \leq \kappa/2 \text{ implies }
      \widehat{w}_\gamma = \widetilde{w}_\gamma. 
    \end{equation}
    Moreover, $\widehat{w}_\gamma$ is continuously differentiable in
    $\lam_{y}$, $y\in \{\even,\odd\}$, and
    \begin{equation}
      \label{eq:weightderivativebound}
      \left| \frac{d \widehat{w}_\gamma}{d
          \lambda_{y}}\right| \leq \frac{R
        |\gamma|^2}{\lam_{y}} e^{-\tau |\gamma|},
    \end{equation}
    where $R =4(1+\norm{\chi'}_{\infty})$, with $\chi$ the $C^1$ cutoff function defined above.
  \item Assume $\Lambda=\inte\gamma$ for $\gamma\in\cC_{\leq n}$.
    Then for $x,y\in \{\even,\odd\}$,
  \begin{align}
    \label{eq:Zind1}
    Z^x_{\Lam} &\leq \exp\left( \widehat{\psi}_n \Delta_{x} |\Lambda \cap V_{x}| + c_n |\gamma| \right) 
    \\
  \label{eq:Zind2}
  \abs{\frac{d Z^{y}_{\Lam}}{d\lam_{x}}} &\leq \frac{|\Lam\cap V_{x}|}{\lam_{x}} \exp
    (\widehat \psi_{n} \Delta_{y} |\Lam\cap V_{y}| +
    c_{n}|\gamma|).
  \end{align}
\end{enumerate}
\end{prop}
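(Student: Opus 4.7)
The proof is by joint induction on $n$, establishing parts 1 and 2 in lockstep. The interdependence is essential: the bound~\eqref{eq:Zind1} at level $n$ is needed to estimate the partition function ratio in the definition~\eqref{eq:tw2} of $\widehat w_\gamma$ for class $n+1$ contours, while the weight bound~\eqref{eq:weightbound} for classes $\le n$ verifies the Koteck\'y--Preiss criterion (Proposition~\ref{prop:FV729}) that underlies the cluster-expansion manipulations behind~\eqref{eq:Zind1} at the next level. The framework is Zahradn\'ik's truncation strategy as presented in~\cite[Chapter~7]{friedli2017statistical}, adapted to our combinatorial setting. For the base case $n=1$, $\gamma$ has $|\inte\gamma|=1$, $|\gamma|=\Delta_x$, and $\widehat w_\gamma=\widetilde w_\gamma=\lam_x^{-1}$; all stated bounds are immediate for $\lam_\star$ large, using $\lam_\odd\in\cU$ to control $\lam_\odd$ in terms of $\lam_\even$.

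For the inductive step, assume 1 and 2 hold on $\cC_{\le n}$. Part 1 together with Proposition~\ref{prop:FV729} yields convergent cluster expansions for the polymer models on $\cC^x_{\le n}$, validating the definitions~\eqref{eq:cpressure} of $\widehat Q^x_n$ and $\widehat\psi^x_n$ and giving control of their $\lam_y$-derivatives. To prove~\eqref{eq:weightbound} on $\gamma\in\cC^\even_{n+1}$ (the odd case is symmetric), observe that $w_\gamma\,\Xi^\odd_{\inte_\odd\gamma}/\Xi^\even_{\inte_\odd\gamma}=Z^\odd_{\inte_\odd\gamma}/Z^\even_{\inte_\odd\gamma}$. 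Bound the numerator above using~\eqref{eq:Zind1} of the inductive hypothesis applied componentwise (each component of $\inte_\odd\gamma$ is of the form $\inte\gamma''$ for some $\gamma''\in\cC_{\le n}$ by Lemma~\ref{lem:allowed}), and bound the denominator from below by applying Proposition~\ref{prop:FV729} to the truncated polymer model on $\cC^\even_{\le n}$ restricted to $\inte_\odd\gamma$, giving $\log Z^\even_{\inte_\odd\gamma}\ge\Delta_\even\widehat\psi^\even_n|V_\even\cap\inte_\odd\gamma|-O(|\gamma|)$. Using Lemma~\ref{lem:VTPC} to extract $\lam_\even^{-|\gamma|/\Delta_\even}$ from $w_\gamma$, this produces
\begin{equation*}
\widehat w_\gamma\le\chi(\,\cdot\,)\,\lam_\even^{-|\gamma|/\Delta_\even}\exp\!\bigl(a^\even_n\Delta_\odd|V_\odd\cap\inte_\odd\gamma|+O(|\gamma|)\bigr).
\end{equation*}
On the support of $\chi$ one has $a^\even_n\Delta_\odd|\inte_\odd\gamma\cap V_\odd|^{1/2}\le 2\kappa$, so the isoperimetric inequality~\eqref{eq:bireg-iso} gives $a^\even_n\Delta_\odd|V_\odd\cap\inte_\odd\gamma|\le(2\kappa/\Ciso)|\gamma|$; the choice~\eqref{eq:cutoff} of $\kappa$ makes this a controlled fraction of $(\log\lam_\even/\Delta_\even)|\gamma|$, which is absorbed by the Peierls factor $\lam_\even^{-|\gamma|/\Delta_\even}$. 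The equality~\eqref{eq:truncequal} follows from a parallel estimate: $a^x_n\Delta_x|\inte\gamma|^{1/2}\le\kappa/2$ keeps the argument of $\chi$ below $\kappa$, where $\chi\equiv 1$. The derivative bound~\eqref{eq:weightderivativebound} comes from differentiating~\eqref{eq:tw2} with the product rule, using the boundedness of $\chi'$ and Proposition~\ref{prop:FV729} part 2 to control derivatives of the partition-function ratio.

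For part 2 at level $n+1$, take $\Lambda=\inte\gamma$ with $\gamma\in\cC_{n+1}$. By Lemma~\ref{lem:spo_order}(2) every contour in $\cCb(\Lambda)$ lies in $\cC_{\le n}$, so the truncated weights constructed above apply. Expanding $\Xi^x_\Lambda$ via Lemma~\ref{lem:excon} and applying the inductive hypothesis to each interior subregion reduces the task to bounding a polymer partition function whose weights satisfy~\eqref{eq:weightbound}. Proposition~\ref{prop:FV729} with identity~\eqref{eq:FV729Surface-even} then gives $\log\Xi^x_\Lambda\le\Delta_x(\widehat\psi^x_n-q^x)|V_x\cap\Lambda|+\eta|\partial\Lambda|$. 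Multiplying by $\lam_x^{|V_x\cap\Lambda|}$, using $q^x\Delta_x=\log\lam_x$, $\widehat\psi^x_n\le\widehat\psi_n$, and $|\partial\Lambda|\le|\gamma|$, yields~\eqref{eq:Zind1} with $c_{n+1}=c_n+O(\eta)$. Starting from $c_0=2$ and taking $\lam_\star$ so that $\eta=e^{-\tau/3}$ is small, the sequence $(c_n)$ is increasing and stays below $3$. The derivative bound~\eqref{eq:Zind2} is obtained analogously by differentiating the polymer representation and invoking~\eqref{eq:weightderivativebound} with Proposition~\ref{prop:FV729} part 2.

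The principal obstacle is the calibration in the weight-bound step: the cutoff $\chi$ must be slack enough for small contours to retain their true weight so that~\eqref{eq:truncequal} holds, yet sharp enough to tame unstable large contours so that~\eqref{eq:weightbound} holds, and the surface corrections must telescope to keep $c_n$ bounded. The square-root isoperimetry~\eqref{eq:bireg-iso} is exactly what makes the balance close, converting the volume-scale free-energy imbalance $a^x_n|\inte\gamma|$ on the support of $\chi$ into a surface-scale quantity $(\kappa/\Ciso)|\gamma|$ that the Peierls decay can dominate.
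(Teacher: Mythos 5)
Your proposal follows essentially the same route as the paper, which itself simply defers to the inductive Zahradn\'{i}k-style argument of \cite[Proposition~7.34]{friedli2017statistical} and indicates only the cosmetic adaptations needed. Your high-level structure (joint induction, Koteck\'{y}--Preiss via the truncated weight bound at level $n$, the cutoff $\chi$ and the isoperimetric inequality converting the free-energy gap into a surface term absorbed by the Peierls factor, base case class $1$ with explicit weight $\lam_x^{-1}$) matches the reference closely and correctly identifies the role of~\eqref{eq:bireg-iso} and the choice~\eqref{eq:cutoff} of $\kappa$.

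There are, however, two places where the argument as written would not go through and needs the more careful bookkeeping of the reference. First, the parenthetical assertion that ``each component of $\inte_\odd\gamma$ is of the form $\inte\gamma''$ for some $\gamma''\in\cC_{\le n}$'' is false in general: if $\inte_\even\gamma=\emptyset$ then $\inte_\odd\gamma=\inte\gamma$, a single region whose bounding contour is of class $n+1$, so the inductive hypothesis~\eqref{eq:Zind1} is not available for it. The correct route is to control $\Xi^\odd_{\inte_\odd\gamma}$ via the cluster expansion for the polymer model on $\cCb^\odd(\inte_\odd\gamma)$, whose \emph{polymers} are all of class $\le n$ by Lemma~\ref{lem:spo_order}(2), rather than trying to apply~\eqref{eq:Zind1} to the region itself. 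Second, in the part-2 inductive step you apply Proposition~\ref{prop:FV729} and identity~\eqref{eq:FV729Surface-even} directly to $\Xi^x_\Lambda$, but the polymer representation of $\Xi^x_\Lambda$ (Lemma~\ref{lem:polyrep}) has weights $\widetilde w_\gamma$, not the truncated weights $\widehat w_\gamma$; the inductive weight bound~\eqref{eq:weightbound} controls only the latter, and $\widetilde w_\gamma$ need not satisfy Koteck\'{y}--Preiss. To obtain~\eqref{eq:Zind1} one must compare the true partition function with the truncated one, using~\eqref{eq:truncequal} for contours where the two weights agree and handling the remaining (``unstable'') contours separately; this is the substantive content of the reference's proof, which your sketch elides. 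These gaps are exactly where the Zahradn\'{i}k truncation machinery earns its keep, so while your overall strategy and constant-tracking are right, the two steps above would need to be replaced by the argument in \cite[Proposition~7.34]{friedli2017statistical} rather than the shortcuts suggested.
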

We have
  deliberately formulated the result to look
  like~\cite[Proposition~7.34]{friedli2017statistical}.
Given the notation and setup above, the proof of
  Proposition~\ref{prop:FV734} involves no new ideas and follows this reference.   
  To avoid reproducing a somewhat lengthy proof with no new
  insights, below we simply indicate the minor differences that arise
  in establishing Proposition~\ref{prop:FV734}.

\begin{proof}[Sketch of Proof of Proposition~\ref{prop:FV734}]
  The proof of Proposition~\ref{prop:FV734} follows the proof
  of~\cite[Proposition~7.34]{friedli2017statistical} closely. The
  changes needed reflect (i) our general combinatorial framework
  compared to the $\Z^{d}$-specific construction
  in~\cite{friedli2017statistical} and (ii) the slightly different
  nature of the hard-core model compared to the Blume-Capel model. The
  second point arises as the hard-core model has activity parameters
  $\lam_{\odd}$ and $\lam_{\even}$ as opposed to a large parameter
  $\beta$ that appears in front of a Hamiltonian. This explains why we
  obtain factors $\lam_{x}^{-1}$ in derivative estimates -- a
  derivative decreases the power of the activity parameters by one.

  Our combinatorial framework results in and induction with the base
  case considering contours of class one. This is slightly different
  than in~\cite{friedli2017statistical}, where the base case considers
  contours with empty interiors. All quantities (weights and partition
  functions) in our base case can be explicitly computed, e.g., the
  weights of the smallest contours are $\lam_{\even}^{-1}$ and
  $\lam_{\odd}^{-1}$. These explicit formulas make the verifications
  straightforward.

  The induction step is essentially the same as
  in~\cite{friedli2017statistical}. The definition of the truncated
  weights is well-defined by Lemma~\ref{lem:truncpress}. The key
  analytical tools~\cite[Lemmas~7.29 and~7.31]{friedli2017statistical}
  concerning the cluster expansion were established in
  Proposition~\ref{prop:FV729}. Three further points are worth
  remarking on. First, our hypotheses ensure that the worst possible
  isoperimetric behavior is the same (up to constants) as the
  isoperimetric behavior of $\mathbb{Z}^{2}$,
  see~\eqref{eq:bireg-iso}. Isoperimetric arguments are thus
  essentially identical to those
  in~\cite{friedli2017statistical}. Second, our contour weights are
  formulated somewhat differently than those
  in~\cite{friedli2017statistical}. In particular,
  $\widetilde w_{\gamma}$ can be expressed solely as a ratio of
  partition functions. This only simplifies matters. The boundary cost
  of contours is encoded in the comparison of $b_{\even}(\gamma)$ and
  $b_{\odd}(\gamma)$; the concrete statement that enables this is
  Lemma~\ref{lem:VTPC}. Third, a key aspect of the induction is the
  definition of the truncated weights, in particular the choice of
  $\kappa$. We have made essentially the same definition as
  in~\cite{friedli2017statistical}, up to some numerical factors that
  arise from our combinatorial framework.
\end{proof}

\section{Applications}
\label{sec:applications}

This section completes the proofs of our main theorems.
Theorem~\ref{thm:biregular} is proved in Section~\ref{sec:phases},
apart from establishing phase coexistence. Phase coexistence and
Theorems~\ref{thm:transitive} and~\ref{thm:matched} are established in
Section~\ref{sec:phase-coexistence}. Lastly
Section~\ref{sec:algorithms} outlines the proof of
Theorem~\ref{thm:expansion}.

\subsection{Phase Diagram}
\label{sec:phases}

Proposition~\ref{prop:FV734} is uniform in $n$, which allows us to
define 
\begin{equation}
  \label{eq:limf}
  \widehat\psi^{\even} = \lim_{n\to\infty}\widehat\psi^{\even}_{n}, \qquad
    \widehat\psi^{\odd} = \lim_{n\to\infty}\widehat\psi^{\odd}_{n}.
\end{equation}
and these limits exist as the truncated weights satisfy the hypotheses
of Proposition~\ref{prop:FV729} when $\lam_{\even}\geq\lam_{\star}$
and $\lam_{\odd}\in\cU$. In particular, we have
\begin{equation}
  \label{eq:truncfree}
  \widehat\psi = \lim_{n\to\infty}\widehat\psi_{n},
\end{equation}
and importantly, $\widehat\psi$ is the true free energy of the hard-core model on a
graph $G$, as the next proposition formalizes.

\begin{prop}
  \label{prop:pressure}
  Under Assumption~\ref{as:3}, if $\lam_{\even}\geq\lam_{\star}$ and
  $\lam_{\odd}\in\cU$, then for $v\in V$,
  \begin{equation}
    \label{eq:pressure}
    \lim_{n\to\infty} \frac{ \log Z^{\even}_{B_{n}(v)}}{|E(B_{n}(v))|} = \widehat\psi.
  \end{equation}
  The same limit is obtained with odd boundary conditions, or with any
  boundary condition in the sense of Section~\ref{sec:md-infinite}. 
\end{prop}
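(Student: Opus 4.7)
The plan is to sandwich $\log Z^{\bullet}_{B_n(v)}$ between matching upper and lower bounds of the form $\widehat\psi\cdot|E(B_n(v))| + O(|\partial B_n(v)|)$, then divide by $|E(B_n(v))|$ and use the polynomial volume growth bound of Lemma~\ref{lem:vgrowth} to conclude $|\partial B_n(v)|/|E(B_n(v))|\to 0$.

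Without loss of generality assume the even phase is stable, so $\widehat\psi = \widehat\psi^\even \geq \widehat\psi^\odd$ and $a^\even_n\to 0$. For even boundary conditions, item~1 of Proposition~\ref{prop:FV734} then gives that the cutoff $\chi$ in~\eqref{eq:tw2} is never activated on even contours (for $n$ large enough), so $\widehat w_\gamma = \widetilde w_\gamma$ for every $\gamma\in\cCb^\even(B_n(v))$. Lemma~\ref{lem:polyrep} therefore expresses $\Xi^\even_{B_n(v)}$ as a polymer partition function whose weights satisfy the Koteck\'{y}--Preiss condition verified in the proof of Proposition~\ref{prop:FV734}. Applying the representation~\eqref{eq:FV729Surface-even} in this regime yields
\[
\log \Xi^\even_{B_n(v)} = \sum_{w\in V_\even\cap B_n(v)} \widehat Q^\even(w) - S_\even(\partial B_n(v)),
\]
where $\widehat Q^\even(w)$ is constant on $V_\even$ by vertex transitivity within a class and the surface term is $O(|\partial B_n(v)|)$ by Proposition~\ref{prop:FV729}. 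Combining with the prefactor $\log\lam_\even^{|V_\even\cap B_n(v)|}$ from $Z^\even_{B_n(v)} = \lam_\even^{|V_\even\cap B_n(v)|}\Xi^\even_{B_n(v)}$, using that $\Delta_\even|V_\even\cap B_n(v)| = |E(B_n(v))| + O(|\partial B_n(v)|)$ since every edge has exactly one even endpoint, and letting $n\to\infty$ yields the limit $q^\even + \widehat Q^\even/\Delta_\even = \widehat\psi^\even = \widehat\psi$.

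For odd boundary conditions in this same stable-even regime, the upper bound $\log Z^\odd_{B_n(v)} \leq \widehat\psi\cdot|E(B_n(v))| + O(|\partial B_n(v)|)$ follows from item~2 of Proposition~\ref{prop:FV734} applied to the smallest region $\Lambda\supseteq B_n(v)$ of the form $\inte\gamma$; one-endedness combined with polynomial volume growth ensures the existence of such a $\Lambda$ with $|\partial\Lambda|$ and $|\Lambda\setminus B_n(v)|$ both $O(|\partial B_n(v)|)$. For the matching lower bound, I restrict the sum defining $Z^\odd_{B_n(v)}$ to configurations that contain a single outermost even contour $\gamma_\star$ just inside $\partial B_n(v)$ with the even phase filling $\inte_\odd\gamma_\star$: this family contributes at least $\widetilde w_{\gamma_\star}\, Z^\even_{\inte_\odd \gamma_\star}$ with $|\gamma_\star| = O(|\partial B_n(v)|)$, and the stable-phase analysis applied to $\inte_\odd \gamma_\star$ yields the desired lower bound. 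The main obstacle is exactly the construction of this outermost contour in the general cycle-basis setting; this is where the contour machinery of Section~\ref{sec:hcm} is genuinely needed, and one must check that the appropriate $\gamma_\star$ exists within $\cCb(B_n(v))$.

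Finally, for an arbitrary infinite-volume Gibbs measure as in Section~\ref{sec:md-infinite}, the FKG stochastic ordering of Lemma~\ref{lem:FKG-order} sandwiches the finite-volume marginal on $B_n(v)$ between those of the even- and odd-boundary-condition measures, so the corresponding partition functions are bracketed (up to a multiplicative factor bounded in $n$) by $Z^\even_{B_n(v)}$ and $Z^\odd_{B_n(v)}$; since the normalized logarithms of both have the common limit $\widehat\psi$, the same limit holds for any boundary condition.
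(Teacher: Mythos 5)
Your overall framing (sandwich $\log Z^\bullet_{B_n(v)}$ up to surface terms, then divide and use Lemma~\ref{lem:vgrowth}) is in the right spirit, but the execution diverges from the paper and contains genuine gaps.

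The paper's proof is much shorter than yours because it \emph{first} disposes of the dependence on boundary conditions via a standard surface-vs-volume comparison (referencing the argument in the proof of Lemma~\ref{lem:bcequiv}): two boundary conditions on $B_n(v)$ differ on a shell of $O(|\partial B_n(v)|)$ vertices (bounded width thanks to the bounded cycle basis), so the partition functions differ by a multiplicative factor $\exp(O(|\partial B_n(v)|))$, which is negligible after normalization. This reduces everything to a \emph{single} boundary condition, and then the upper bound follows from Proposition~\ref{prop:FV734} part 2 and the matching lower bound from the observation that the truncated polymer partition function appearing in Lemma~\ref{lem:truncpress} only sums over a subset of the polymers defining $Z^\even_{B_n(v)}$. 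You instead try to prove the odd- and general-boundary cases separately, which leads to the problems below.

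Three concrete issues. \textbf{(i)} Your appeal to FKG (Lemma~\ref{lem:FKG-order}) for the general-boundary-condition case is not correct: stochastic domination of measures does not give a bracketing of partition functions. What you actually need is exactly the surface/volume argument the paper uses, not FKG. \textbf{(ii)} Your claim that, for the stable even phase, the cutoff $\chi$ in \eqref{eq:tw2} is ``never activated'' on even contours does not follow from $a^\even_n\to 0$ alone; by \eqref{eq:truncequal} you need $a^\even_n\Delta_\even|\inte\gamma|^{1/2}\leq\kappa/2$ uniformly over $\gamma\in\cC^\even_{\leq n}$, i.e.\ a quantitative rate of decay of $a^\even_n$ (roughly $o(n^{-1/2})$), which is not established in the statements you cite. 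The paper avoids this by sandwiching with upper and lower bounds rather than claiming equality of truncated and untruncated weights. \textbf{(iii)} Your lower bound for $Z^\odd_{B_n(v)}$, via a single ``outermost'' even contour $\gamma_\star$ wrapping $B_n(v)$, has the gap you already flag (existence of $\gamma_\star\in\cCb(B_n(v))$ with $|\gamma_\star|=O(|\partial B_n(v)|)$), and your upper bound also quietly assumes that $B_n(v)$ can be enlarged to some $\inte\gamma$ with $|\Lambda\setminus B_n(v)| = O(|\partial B_n(v)|)$, which is not obvious in the general cycle-basis setting. Both of these efforts become unnecessary once the boundary-condition-independence step is done first, as in the paper.
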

\begin{proof}
  Independence from the boundary condition follows from
  Lemma~\ref{lem:vgrowth} and assumption of $G$ having a bounded cycle basis,
  c.f.\ the proof of Lemma~\ref{lem:bcequiv}.

  Restricting to even boundary conditions,
  Proposition~\ref{prop:FV734} part 2.\ yields an upper bound of
  $\widehat\psi$. A matching lower bound is obtained by using that the
  quantity $\lam_{x}^{|V_{x}\cap V(B_{k}(v))|} \Xi_{B_{k}(v)}^{x,n}$
  used in defining $\widehat\psi^{x}_{n}$ in
  Lemma~\ref{lem:truncpress} is a lower bound for
  $Z^{\even}_{B_{k}(v)}$; this holds as only a subset of polymers are
  allowed compared to the full polymer representation of
  $Z^{\even}_{B_{k}(v)}$.
\end{proof}

\begin{lemma}
  \label{lem:coex-point}
  Suppose $G$ satisfies Assumption~\ref{as:3}.  If
  $\lam_{\even}\geq \lam_{\star}(\Ciso,D,\Delta)$, there is a unique
  $\lam_{\odd,c}\in\cU$ such that   $\widehat\psi^{\even}=\widehat\psi^{\odd}$. 
\end{lemma}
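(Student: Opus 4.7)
The plan is to set $F(\lam_{\odd}) = \widehat\psi^{\even}(\lam_{\even},\lam_{\odd}) - \widehat\psi^{\odd}(\lam_{\even},\lam_{\odd})$ for $\lam_{\odd}\in\cU$ and prove (i) $F$ is continuous, (ii) $F$ takes values of opposite sign at the two endpoints of $\cU$, and (iii) $F$ is strictly monotone in $\lam_{\odd}$. Together these give existence and uniqueness of the required $\lam_{\odd,c}$.

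For the leading-order analysis, recall $\widehat\psi^{x} = q^{x} + \widehat Q^{x}/\Delta_{x}$ with $q^{x} = \log\lam_{x}/\Delta_{x}$. Using the parametrization $\lam_{\odd} = \proplam\,\lam_{\even}^{\Delta_{\odd}/\Delta_{\even}}$, a direct computation gives $q^{\odd}-q^{\even} = \log\proplam/\Delta_{\odd}$, so
\begin{equation*}
F(\lam_{\odd}) \;=\; -\frac{\log\proplam}{\Delta_{\odd}} + \frac{\widehat Q^{\even}}{\Delta_{\even}} - \frac{\widehat Q^{\odd}}{\Delta_{\odd}}.
\end{equation*}
The truncated weights $\widehat w_{\gamma}$ satisfy \eqref{eq:weightbound} by Proposition~\ref{prop:FV734}, hence Proposition~\ref{prop:FV729}.1 applies uniformly in $n$ and (in the limit $n\to\infty$) gives $|\widehat Q^{x}| \le e^{-\tau/3}$ for each $x\in\{\even,\odd\}$. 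Choosing $\lam_{\star}$ large, so that $\tau$ is large, makes the $\widehat Q^{x}$ terms negligible compared to $-\log\proplam/\Delta_{\odd}$ when $\proplam$ is bounded away from $1$. In particular at $\proplam = 1/2$ one has $F>0$ and at $\proplam = 2$ one has $F<0$, and by continuity (which follows from the continuous differentiability of the $\widehat w_{\gamma}$ in \eqref{eq:weightderivativebound} and the stability of the cluster expansion) there is some $\lam_{\odd,c}\in\cU$ with $F(\lam_{\odd,c})=0$.

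For uniqueness I will show $dF/d\lam_{\odd}<0$ on all of $\cU$. Since $\lam_{\even}$ is held fixed, $dq^{\even}/d\lam_{\odd}=0$ and $dq^{\odd}/d\lam_{\odd} = 1/(\lam_{\odd}\Delta_{\odd})$, so
\begin{equation*}
\frac{dF}{d\lam_{\odd}} = -\frac{1}{\lam_{\odd}\Delta_{\odd}} + \frac{1}{\Delta_{\even}}\frac{d\widehat Q^{\even}}{d\lam_{\odd}} - \frac{1}{\Delta_{\odd}}\frac{d\widehat Q^{\odd}}{d\lam_{\odd}}.
\end{equation*}
Applying Proposition~\ref{prop:FV729}.2 together with the derivative bound \eqref{eq:weightderivativebound} from Proposition~\ref{prop:FV734} (which, recall, has a prefactor $1/\lam_{y}$ on the right-hand side), the two $\widehat Q$-derivative terms are bounded in magnitude by $C e^{-\tau/3}/\lam_{\odd}$ for a constant $C=C(\Delta,R)$. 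For $\tau$ sufficiently large (equivalently, $\lam_{\even}\ge\lam_{\star}$ sufficiently large), this correction is dominated by $1/(\lam_{\odd}\Delta_{\odd})$, so $dF/d\lam_{\odd}<0$ throughout $\cU$. Strict monotonicity gives uniqueness of the zero.

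The main technical point is verifying that the derivative estimates lift cleanly through the truncation machinery. The issue is that $\widehat w_{\gamma}$ depends on $\lam_{\odd}$ both through $w_{\gamma}$ and through the cutoff $\chi$ evaluated at a combination involving $\widehat\psi^{\even}_{n}-\widehat\psi^{\odd}_{n}$, so differentiating involves an implicit dependence across the inductive construction. However, this is exactly what \eqref{eq:weightderivativebound} in Proposition~\ref{prop:FV734} is set up to control uniformly in $n$; passing to the limit $n\to\infty$ (permissible because the bounds are uniform and the cluster expansion converges absolutely) then yields the needed bounds on $d\widehat Q^{x}/d\lam_{\odd}$. Modulo invoking these prior results, the argument is a straightforward intermediate-value-plus-monotonicity calculation.
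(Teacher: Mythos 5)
Your proof is correct and takes essentially the same approach as the paper: both write $\widehat\psi^{\odd}-\widehat\psi^{\even}$ as $\log\proplam/\Delta_{\odd}$ plus small corrections controlled by the cluster expansion, apply the intermediate value theorem for existence, and establish uniqueness via strict monotonicity using the derivative bound \eqref{eq:weightderivativebound}. The only cosmetic differences are your sign convention for $F$ and your choice to differentiate with respect to $\lam_{\odd}$ rather than $\proplam$, which are equivalent since $\lam_{\odd}$ is a positive-slope affine function of $\proplam$ at fixed $\lam_{\even}$.
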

\begin{proof}
  By Propositions~\ref{prop:FV734}, if $\lam_{\star}$ is large enough
  the hypotheses of Proposition~\ref{prop:FV729} are satisfied, and
  hence $\widehat\psi^{\even}$ and $\widehat\psi^{\odd}$ are
  continuously differentiable functions of $\lam_{\even}$ and of
  $\lam_{\odd}$ for $\lam_{\odd}\in \cU$, and hence also of $\proplam$
  when we write
  $\lam_{\odd}=\proplam
  \lam_{\even}^{\Delta_{\odd}/\Delta_{\even}}$. Note that
  $\widehat\psi^{\odd}=\widehat\psi^{\even}$ if and only if
  \begin{equation}
    \label{eq:coex2}
    \widehat{\psi}^{\odd} - \widehat{\psi}^{\even} =
    \frac{\log\proplam}{\Delta_{\odd}} - \frac{\widehat
      Q^{\even}}{\Delta_{\even}} + \frac{\widehat
      Q^{\odd}}{\Delta_{\odd}} = 0. 
  \end{equation}
  Moreover, Proposition~\ref{prop:FV729} yields
  \begin{align}
    \label{eq:coex1}
    |\widehat\psi^{\even}-\frac{\log \lam_{\even}}{\Delta_{\even}}|
    &= | \frac{\widehat Q^{\even}}{\Delta_{\even}} | \leq \eta \\
    \label{eq:coex11}
    |\widehat\psi^{\odd}-\frac{\log \lam_{\even}}{\Delta_{\even}} -
    \frac{\log \proplam}{\Delta_{\odd}}| 
    &= | \frac{\widehat Q^{\odd}}{\Delta_{\odd}} | \leq \eta
  \end{align}
  where we recall $\eta=e^{-\tau/3}$, and $\tau\uparrow\infty$ as
  $\lam_{\star}\uparrow\infty$. In the subsequent steps of the proof we
  may increase $\lam_{\star}$ without explicitly saying so, if necessary.
  
A solution $\proplam_{c}\in (\frac12, 2)$ to~\eqref{eq:coex2} exists
by the intermediate value theorem, as $\log\proplam$,
$\widehat Q^{\even}$, and $\widehat Q^{\odd}$ are all continuous
functions of $\proplam$, and the quantity is negative (positive) for
$\rho$ sufficiently close to $1/2$ ($2$). This solution $\proplam_{c}$
is unique, as the derivative of the left-hand side of~\eqref{eq:coex2}
with respect to $\proplam$ is uniformly positive for
$\proplam\in \cU$. This is because there is an $K>0$ such that both
$\widehat Q^{\even}$ and $\widehat Q^{\odd}$ have derivatives
uniformly bounded by $K\eta$ by Propositions~\ref{prop:FV729}
and~\ref{prop:FV734}, as the factor arising from
  differentiating $\lam_{\odd}$ as a function of $\rho$ is compensated
  for by the factor $\lam_{\odd}^{-1}$
  in~\eqref{eq:weightderivativebound}.
\end{proof}

\begin{proof}[Proof of Theorem~\ref{thm:biregular}, parts 1.\ and 2.]
  As in the proof of Lemma~\ref{lem:coex-point}, we argue using
  $\proplam$ as a parameter. First observe that the free energy
  $\widehat\psi$ exists by Proposition~\ref{prop:pressure}, which
  shows $\widehat\psi = f_{G}$ as defined by~\eqref{eq:fg}.

  For part 1., note that by~\eqref{eq:coex2}, the definition of
  $\widehat\psi$, and the monotonicity of $\log\proplam$, if
  $\proplam<\proplam_{c}$ then $\widehat\psi = \widehat\psi^{\even}$,
  and if $\proplam>\proplam_{c}$ then
  $\widehat\psi = \widehat\psi^{\odd}$.  This enables the computation
  of the derivative of $\widehat\psi$ with respect to $\proplam$. That
  is, the derivatives of $\widehat\psi^{\even}$ and
  $\widehat\psi^{\odd}$ with respect to $\proplam$ are
  $\frac{1}{\Delta_{\even}}\frac{\textrm{d}\widehat
    Q^{\even}}{\textrm{d}\proplam}$ and
  $\frac{1}{\proplam
    \Delta_{\odd}}+\frac{1}{\Delta_{\odd}}\frac{\textrm{d}\widehat
    Q^{\odd}}{\textrm{d}\proplam}$,
  respectively. Propositions~\ref{prop:FV729} and~\ref{prop:FV734}
  imply that for $\proplam\in (\frac12,2)$, these derivatives of
  $Q^{\even}$ and $Q^{\odd}$ exist and are continuous. This
    proves that continuous differentiability may fail only at pairs
    $(\lam_{\even},\lam_{\odd,c}(G,\lam_{\even}))$.

  For part 2., observe that re-arranging~\eqref{eq:coex2} yields a
  formula for $\lam_{\odd,c}$: it solves
  \begin{equation}
    \label{eq:coex-point}
    \frac{\log \lam_{\odd,c}}{\Delta_{\odd}} = \frac{\log
      \lam_{\even}}{\Delta_{\even}} + \frac{\widehat
    Q^{\even}(\lam_{\even},\lam_{\odd,c})}{\Delta_{\even}}-\frac{\widehat
    Q^{\odd}(\lam_{\even},\lam_{\odd,c})}{\Delta_{\odd}},
  \end{equation}
  and the stated estimate follows by using the bounds~\eqref{eq:coex1}
  and~\eqref{eq:coex11} as $\eta\downarrow 0$ as
  $\lam_{\star}\uparrow \infty$.
\end{proof}

\begin{remark}
  \label{rem:rhomb}
  Recalling Example~\ref{ex:biregular}, the more precise formula for
  coexistence follows from~\eqref{eq:coex-point}, as the leading
  contributions to $Q^{\even}$ and $Q^{\odd}$ are
  $\lam_{\even}^{-1}$ and $\lam_{\odd}^{-1}$. Taking these terms into
  account shows the (leading) effect of $\Delta_{\even}$ and
  $\Delta_{\odd}$. The effect of the geometry of the graph $G$ beyond
  $\Delta_{\odd}$ and $\Delta_{\even}$ is encoded in higher-order terms.
\end{remark}

To prove Theorem~\ref{thm:biregular}, part 3.\ we use a result of
  van den Berg and
  Steif~\cite[Proposition~4.6]{van1994percolation}. They state their
  result in the setting of $\Z^{d}$, but their proof extends and yields the
  following. 
  \begin{lemma}
    \label{lem:vdBS}
    Suppose $G$ is infinite and bipartite, that
    $f_{G}(\lam_{\even},\lam_{\odd})$ exists, is independent of the
    boundary conditions chosen in its definition, and is
    differentiable in both variables at
    $(\lam_{\even},\lam_{\odd})$. Then there is a unique Gibbs measure
    at $(\lam_{\even},\lam_{\odd})$.
  \end{lemma}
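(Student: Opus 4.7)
The plan is to adapt the van den Berg--Steif argument, which combines convexity of finite-volume log partition functions in the logarithmic activities with FKG stochastic monotonicity. The strategy is to reduce uniqueness to the coincidence of the two extremal Gibbs measures $\mu^{\even}$ and $\mu^{\odd}$ introduced in Section~\ref{sec:md-infinite}, to prove equality of their normalized expected even-vertex occupation counts via convex analysis, and finally to upgrade this averaged equality to pointwise equality of single-vertex marginals.

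First, I would reduce to showing $\mu^{\even}=\mu^{\odd}$. By FKG (the infinite-volume analog of Lemma~\ref{lem:FKG-order}), any Gibbs measure $\mu$ satisfies $\mu^{\odd} \lessdot \mu \lessdot \mu^{\even}$, so uniqueness reduces to coincidence of the extremal measures. By the single-vertex marginal criterion recalled at the end of Section~\ref{sec:md-infinite}, it further suffices to prove $\mu^{\even}(w \in I) = \mu^{\odd}(w \in I)$ for every $w\in V$.

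Second, I would exploit convexity. For each $n$ and each boundary condition, $(\log\lam_{\even}, \log\lam_{\odd}) \mapsto \log Z^{\even}_{B_n(v)}(\lam_{\even},\lam_{\odd})$ is jointly convex, and its partial derivative in $\log\lam_{\even}$ equals $\mathbb{E}^{\even}_{B_n(v)}[|I \cap V_{\even} \cap B_n(v)|]$; the analogous identity holds under odd boundary conditions. Dividing by $|E(B_n(v))|$ and letting $n\to\infty$, both sequences of convex functions converge to the common limit $f_G$ by the hypothesis that the free energy is independent of boundary conditions. Since $f_G$ is assumed differentiable at $(\lam_{\even},\lam_{\odd})$, a standard convex analysis result (convergence of a sequence of convex functions to a differentiable limit forces convergence of the gradients) yields
\[
\lim_{n\to\infty} \frac{1}{|E(B_n(v))|} \sum_{w\in V_{\even}\cap B_n(v)} \Bigl( \mu^{\even}_{B_n(v)}(w\in I) - \mu^{\odd}_{B_n(v)}(w\in I) \Bigr) = 0.
\]
By the FKG inequality each summand is non-negative, so the normalized sum of non-negative pointwise discrepancies vanishes in the limit.

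The main obstacle is passing from this averaged equality to pointwise equality of the single-vertex marginals. This is the place where the setting in which this lemma is invoked matters: in the proof of Theorem~\ref{thm:biregular}, vertex transitivity within each parity class is assumed, so that $\mu^{\even}(w\in I)$ and $\mu^{\odd}(w\in I)$ are both independent of the choice of $w\in V_{\even}$. Under this symmetry, vanishing of the averaged discrepancy immediately yields $\mu^{\even}(w\in I) = \mu^{\odd}(w\in I)$ for every even $w$. Repeating the whole argument with $\log\lam_{\odd}$ in place of $\log\lam_{\even}$ handles odd vertices, which combined with Step~1 completes the proof.
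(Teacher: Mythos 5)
The paper does not actually give a proof of this lemma; it defers to van den Berg--Steif~\cite[Proposition~4.6]{van1994percolation} and asserts that their proof extends. Your reconstruction -- reduce to $\mu^{\even}=\mu^{\odd}$ via the FKG sandwich and the single-vertex marginal criterion, use joint convexity of $\log Z$ in log-activities together with convergence of gradients of convex functions at a point of differentiability, and then upgrade the vanishing averaged discrepancy to pointwise equality of marginals -- is precisely the van den Berg--Steif argument, and you correctly observe that the upgrade step uses symmetry that is not literally in the lemma's hypotheses but is supplied by the context (vertex transitivity within parity classes) in which the lemma is invoked.

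There is, however, a small but genuine gap in the final step. You write that, under transitivity within parity classes, vanishing of
\[
\lim_{n\to\infty}\frac{1}{|E(B_n(v))|}\sum_{w\in V_{\even}\cap B_n(v)}\bigl(\mu^{\even}_{B_n(v)}(w\in I)-\mu^{\odd}_{B_n(v)}(w\in I)\bigr)=0
\]
``immediately yields'' $\mu^{\even}(w\in I)=\mu^{\odd}(w\in I)$. But transitivity makes the \emph{infinite-volume} marginals constant, whereas the average above is over the $n$-dependent \emph{finite-volume} marginals, which are not constant in $w$ (they depend on the distance of $w$ to $\partial B_n(v)$, and vertices frozen by the boundary conditions contribute a deterministic discrepancy of $1$). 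The two limits -- in $n$ and over $w$ -- do not commute for free. The standard fix, which you should supply, is the FKG monotonicity of the finite-volume measures in the volume: $\mu^{\even}_{B_n(v)}$ is stochastically decreasing (in $\lessdot$) and $\mu^{\odd}_{B_n(v)}$ stochastically increasing as $n$ grows, so for every $w$ and every $n$
\[
\mu^{\even}_{B_n(v)}(w\in I)-\mu^{\odd}_{B_n(v)}(w\in I)\ \geq\ \mu^{\even}(w\in I)-\mu^{\odd}(w\in I)\ =\ c^{\even}-c^{\odd}\ \geq 0,
\]
with the last quantity constant by transitivity. Plugging this lower bound into the average and using that $|V_{\even}\cap B_n(v)|/|E(B_n(v))|$ is bounded below (and that the frozen-boundary contribution is $o(|E(B_n(v))|)$ by the polynomial volume growth assumption via Lemma~\ref{lem:vgrowth}) forces $c^{\even}=c^{\odd}$, after which your reduction via the single-vertex marginal criterion closes the argument.
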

\begin{proof}[Proof of Theorem~\ref{thm:biregular}, part 3.]
  The conditions of Lemma~\ref{lem:vdBS} are the conclusion of
  Theorem~\ref{thm:biregular}, 1., which we have already verified.
\end{proof}

The proof of Lemma~\ref{lem:vdBS} uses monotonicity properties of the
hard-core model on bipartite graphs, and hence so does our proof of
Theorem~\ref{thm:biregular} part 3. This use of monotonicity could be
avoided by arguing directly in terms of contours; we have not done so
for the sake of efficiency.

\subsection{Phase Coexistence}
\label{sec:phase-coexistence}

In this section we deduce phase coexistence from our earlier results
by a standard (Peierls-type) argument. We consider $\Lambda_{n}\uparrow G$
(see Section~\ref{sec:md-infinite}) with the additional property that
each $\Lambda_{n}$ arises as the interior of a contour. Such a
sequence exists by the bijection described in
Proposition~\ref{lem:bijection}.

Recall that $\mu^{\even}_{\Lambda}$ denotes the hard-core model on
$\Lambda$ with even boundary conditions, and that $c$ is a constant in
the lower bound on the isoperimetric profile in the hypotheses of
Theorem~\ref{thm:matched}.
\begin{prop}
  \label{prop:pt}
  Suppose $G$ is an infinite bipartite graph satisfying the hypotheses
  of Theorem~\ref{thm:transitive} or~\ref{thm:matched}. Let $v_{\odd}$
  and $v_{\even}$ denote fixed vertices in $V_{\odd}$ and
  $V_{\even}$. If $\lambda\geq\lam_{\star}(\Ciso,D,\Delta)$, then
  \begin{equation}
    \label{eq:pt}
    \lim_{n\to\infty}  \mu^{\even}_{\Lambda_{n}}[\textrm{$v_{\even}$
      is occupied}] >\frac{1}{2}, \quad \text{and} \quad 
    \lim_{n\to\infty}  \mu^{\odd}_{\Lambda_{n}}[\textrm{$v_{\odd}$ is
      occupied}] >\frac{1}{2}  \,.
  \end{equation}
\end{prop}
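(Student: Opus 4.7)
The plan is a Peierls-type argument leveraging the convergent polymer expansion of Proposition~\ref{prop:TMAce}. By the symmetry under interchange of even and odd parities, it suffices to prove the first inequality.

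First, I would interpret occupancy via the contour bijection. By Proposition~\ref{lem:bijection}, $\mu^{\even}_{\Lambda_n}$ is supported on compatible, matching, external-even collections $\Gamma \subseteq \cCb(\Lambda_n)$. Inspecting the construction of $I$ from $\Gamma$, the vertex $v_{\even}$ is occupied whenever it lies in the exterior component of $\Lambda_n \setminus \Gamma$, so $v_{\even}$ being unoccupied forces the existence of some external contour $\gamma \in \Gamma$ with $v_{\even} \in \inte \gamma$. Moreover, one-endedness together with Lemma~\ref{lem:order} implies at most one such $\gamma$ can occur in a mutually external compatible collection: any two such contours would have to be nested by tracing a path from $v_{\even}$ to $\partial \Lambda_n$, contradicting external-ness. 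Combining this union bound with the polymer representation of Lemma~\ref{lem:polyrep} and the standard fact that $\mu^{\even}_{\Lambda_n}[\gamma \in \Gamma] \leq \widetilde w_\gamma$ for nonnegative polymer weights yields
\begin{equation*}
\mu^{\even}_{\Lambda_n}[v_{\even} \text{ unoccupied}] \leq \sum_{\substack{\gamma \in \cCb^{\even}(\Lambda_n) \\ v_{\even} \in \inte \gamma}} \widetilde w_\gamma.
\end{equation*}

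Next, I would invoke the bound $\widetilde w_\gamma \leq (e^3 \lambda^{-1/\Delta})^{|\gamma|}$ from Proposition~\ref{prop:TMAce}, valid for $\lambda \geq \lambda_\star(D, \Delta)$, and estimate the combinatorial sum. A contour $\gamma$ with $v_{\even} \in \inte \gamma$ must enclose $v_{\even}$, so the isoperimetric bound underlying each theorem (Lemma~\ref{lem:iso} giving $\Phi_G(t) \geq \Ciso t^{-1/2}$ under Theorem~\ref{thm:transitive}; the hypothesis $\Phi_G(t) \geq \Ciso \log(t+1)/t$ under Theorem~\ref{thm:matched}) forces $|\inte \gamma|$ to be controlled by an explicit function of $|\gamma|$, and in particular at least one vertex of $\gamma$ lies within graph distance $F(|\gamma|)$ of $v_{\even}$. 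Combined with Lemma~\ref{lem:enum_contours} and a ball-volume bound, the number of contours of size $k$ with $v_{\even} \in \inte\gamma$ is controlled in $k$, and balanced against $(e^3 \lambda^{-1/\Delta})^k$ the Peierls sum tends to zero as $\lambda \to \infty$. Taking $\lambda \geq \lambda_\star(\Ciso, D, \Delta)$ large enough makes the sum strictly less than $1/2$ uniformly in $n$, and passing $n \to \infty$ yields the first inequality of~\eqref{eq:pt}.

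The main technical challenge lies in the contour-counting step, where the volume-growth factor $|B_r(v_{\even})|$ must be balanced against the weight decay $\lambda^{-|\gamma|/\Delta}$. The isoperimetric assumptions confine the contour's interior (and hence the relevant ball of candidate edges) to a region whose size is controlled in $|\gamma|$; this localization, combined with the $D$-bounded cycle basis constraint that caps per-edge branching at $eD$, keeps the contour count growth tame, and for $\lambda$ sufficiently large depending on $\Ciso$, $D$, and $\Delta$ the per-edge weight decay dominates. This is the standard Peierls balance, now executed within our cycle-basis combinatorial framework.
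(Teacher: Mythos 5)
Your overall Peierls framework is the same as the paper's: use the contour bijection to show that $v_{\even}$ unoccupied forces a contour enclosing $v_{\even}$, apply a union bound, control the contour probability by the weight bound from Proposition~\ref{prop:TMAce}, enumerate contours via Lemma~\ref{lem:enum_contours}, and use the isoperimetric lower bound to penalize contours that enclose $v_{\even}$ from far away. Where you diverge is in the localization step, and there your argument has a genuine gap.

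You localize a contour $\gamma$ with $v_{\even}\in\inte\gamma$ by bounding $|\inte\gamma|$ via the isoperimetric inequality and then using a ball-volume bound around $v_{\even}$ to count the candidate starting edges. Under the hypotheses of Theorem~\ref{thm:matched}, however, you only have $\Phi_{G}(t)\geq \Ciso\log(t+1)/t$, which gives $|\gamma|\geq \Ciso\log(|\inte\gamma|+1)$ and hence only $|\inte\gamma|\leq e^{|\gamma|/\Ciso}$. So the nearest edge of $\gamma$ to $v_{\even}$ may be at distance exponential in $|\gamma|$, and the ball containing the candidate edges can have size doubly exponential in $|\gamma|$. Multiplying this by the $(eD)^{|\gamma|-1}$ contours per edge cannot be balanced against a weight of the form $(e^{3}\lam^{-1/\Delta})^{|\gamma|}$, no matter how large $\lam$ is. (Under Theorem~\ref{thm:transitive} your approach might be rescued with a growth dichotomy, but you do not carry this out, and it fails outright for Theorem~\ref{thm:matched}.) The paper avoids this entirely by fixing a single self-avoiding path from $v_{\even}$ to $\binf\Lambda_{n}$: any enclosing contour must contain a path edge, and if that first crossing is at the $k$-th edge then $|\inte\gamma|\geq k$ so $|\gamma|\geq \Ciso\log(k+1)$. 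Summing over $k$ and over contour sizes $j\geq \Ciso\log(k+1)$ gives $\sum_{k}\sum_{j\geq\Ciso\log(k+1)}\exp((c_{2}-c_{1}(\lam))j)$, which is finite once $(c_{1}(\lam)-c_{2})\Ciso>1$. This one-dimensional indexing by a fixed ray is what makes the argument work at the logarithmic isoperimetric threshold; you should replace the ball-volume count with it.

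One smaller point: the inequality $\mu^{\even}_{\Lambda_{n}}[\gamma\in\Gamma]\leq \widetilde w_{\gamma}$ is correct, but it does not follow simply from the polymer model of Lemma~\ref{lem:polyrep}, since that polymer measure is not $\mu^{\even}_{\Lambda_{n}}$ (the paper explicitly notes the loss of bijection there). The right route is the external contour representation (Lemma~\ref{lem:excon}) together with a ratio-of-partition-functions argument showing the probability that $\gamma$ is an external contour is at most $\widetilde w_{\gamma}$.
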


The previous proposition yields~Theorem~\ref{thm:transitive} and
Theorem~\ref{thm:matched}.
\begin{proof}[Proofs of Theorems~\ref{thm:transitive}
  and~\ref{thm:matched}]
  Fix $\lam$ large according to Proposition~\ref{prop:pt}, and let
  $v_{\even}$ and $v_{\odd}$ be adjacent vertices. Recalling
  Section~\ref{sec:md-infinite}, the limiting Gibbs measure
  $\mu^{\even} = \lim_{n\to\infty}\mu^{\even}_{\Lambda_{n}}$ exists by
  Lemma~\ref{lem:bcequiv}. The same is true for $\mu^{\odd}$. Since
  $\mu^{\even}[\textrm{$v_\even$ occupied}]>\frac12$,
  $\mu^{\even}[\textrm{$v_{\odd}$ occupied}]<\frac12$. Hence
  $\mu^{\even}$ and $\mu^{\odd}$ have distinct marginals, and are
  therefore distinct measures.
\end{proof}

\begin{proof}[Proof of Proposition~\ref{prop:pt}]
  We consider $\mu^{\even}$ (the argument for $\mu^{\odd}$ is
  analogous). Let $v=v_{\even}$. For $n$ large enough, $v$ is occupied
  unless there is a contour that separates $v$ from
  $\binf \Lambda_{n}$. By Proposition~\ref{prop:TMAce}, the
  probability of a contour of size $n$ is $\exp(-c_{1}(\lambda) n)$
  for some $c_{1}(\lambda)>0$ with $c_{1}(\lambda)\to\infty$ as
  $\lambda\to\infty$. By Lemma~\ref{lem:enum_contours}, the number of
  contours of size $n$ containing a fixed edge $e$ is at most
  $\exp(c_{2}n)$ for some absolute $c_{2}$ depending only on
  $G$. Finally, if a contour contains an edge at graph distance $k$
  from a vertex $v$, and $v$ is contained in the interior of the
  contour, then the contour must contain at least $\Ciso \log (k+1)$
  edges since $\Phi_{G}(t) \geq c\log (t+1) / t$. Note that
  while this is an assumption in the context of
  Theorem~\ref{thm:matched}, it also holds in the context of
  Theorem~\ref{thm:transitive} by Lemma~\ref{lem:iso}.

  Fix a self-avoiding path from $v$ to $\binf \Lambda_{n}$. If a
  contour contains $v$ in its interior, it must contain an edge in
  this path. Hence
  \begin{equation*}
    \mu^{\even}_{\Lambda_{n}}[\textrm{$v$ is
    occupied}] \geq 1-\sum_{k\geq 1} \sum_{j\geq \Ciso \log (k+1) }
    \exp((c_{2}-c_{1}(\lambda))j),
  \end{equation*}
  by using the observation that if $v$ is not occupied, a separating
  contour must exist, and then applying a union bound; we have dropped
  the condition that the contours are contained in $\Lambda_{n}$ to
  obtain an upper bound. For $\lambda$ large enough the final sum is
  as small as desired, which gives the conclusion.
\end{proof}

\begin{proof}[Proof of Theorem~\ref{thm:biregular}, part 4.]
  We first prove phase coexistence occurs. By
  Lemma~\ref{lem:coex-point} and Proposition~\ref{prop:FV734},
  $\widehat w_{\gamma}=\widetilde w_{\gamma}$ for all contours
  $\gamma$ when $\lam_{\odd}=\lam_{\odd,c}$.  By following the
  argument for Proposition~\ref{prop:pt} we then obtain \eqref{eq:pt}
  (the required lower bound on the isoperimetric profile holds by
  Lemma~\ref{lem:iso}). Phase coexistence then follows as in the proof
  of Theorems~\ref{thm:transitive} and~\ref{thm:matched}.

  The last conclusion follows, as if $f_{G}$ was differentiable at
  such pairs, then Lemma~\ref{lem:vdBS} would imply uniqueness occurs;
  recall that the hypothesis of Lemma~\ref{lem:vdBS} have been
  verified in by the proof of Theorem~\ref{thm:biregular}, part 1.
\end{proof}

\subsection{Algorithms}
\label{sec:algorithms}

In this section we indicate how to prove Theorem~\ref{thm:expansion}
by using the method employed in~\cite{helmuth2020algorithmic,BCHPT}.
\begin{proof}[Proof of Theorem~\ref{thm:expansion}]
  Note that Lemma~\ref{lem:allowed} (combined with Lemma~\ref{lem:int_ind}
  and Proposition~\ref{lem:bijection}) ensures that graphs in
  $\mathcal{H}$ allow a reformulation of the hard-core model on
  $H\in\mathcal{H}$ as a contour model. Given this, the argument is the same as
  in~\cite[Section~5.1 and Section~6]{BCHPT}. While the present
  context concerns more general graphs, the essential points are (i)
  there is an ordering of contours into \emph{levels} such that the
  weight of contours of level $k+1$ only depends on contours of level
  at most $k$ and (ii) that all contours of size $k$ can be enumerated
  in time exponential in $k$. The first of these facts was established
  in Section~\ref{sec:contour-polym-repr} using the ordering $\prec$
  from Section~\ref{sec:order}. The second fact is standard, as
  Lemma~\ref{lem:enum_contours} established that the contours we want
  to enumerate can be identified with connected induced subgraphs of a
  bounded-degree graph. The algorithmic enumeration of such objects in
  exponential time is well-known, see,
  e.g.,~\cite[Lemma~3.4]{patel2016deterministic}.
\end{proof}

\begin{remark}
  Theorem~\ref{thm:expansion} only concerns settings in which phase
  coexistence occurs. It is also possible to obtain algorithms
  when there is uniqueness of Gibbs measures and both stable and
  unstable ground states exist, see~\cite{BCHPT}.  This method could
  be used to develop algorithms in the context of
  Theorem~\ref{thm:biregular}; but we have chosen not to pursue this
  in the present paper.
\end{remark}

\section*{Acknowledgements}

This work was done as part of an AIM SQuaRE on `Connections Between
Computational and Physical Phase Transitions', and the authors thank
AIM for the generous support.  The authors also thank Antonio Blanca,
Tom Hutchcroft, Alexandre Stauffer, and Izabella Stuhl for many
helpful conversations on the topic.

S.\ Cannon is supported in part by NSF CCF-2104795.   W.\ Perkins is
supported in part by NSF DMS-2348743.

\end{document}